\journal{arXiv}
   \numberwithin{equation}{section}
\newtheorem{theorem}{Theorem}
\newtheorem{lemma}[theorem]{Lemma}
\newtheorem{proposition}[theorem]{Proposition}
\newtheorem{definition}[theorem]{Definition}
\newtheorem{remark}[theorem]{Remark}
\numberwithin{equation}{section} \numberwithin{theorem}{section}
\begin{document}
\begin{frontmatter}
\author{Qiang Gao$^{a}$, Xiaoyan Zhang$^{a,}$\footnote{Corresponding Author. Email: zxysd@sdu.edu.cn} \\
{\small $^a$ School of Mathematics, Shandong University, Jinan 250100, P.R. China}
}
\date{}

\title{ Ground state and multiple normalized solutions of quasilinear Schr\"odinger equations in the $L^2$-supercritical case and the Sobolev critical case}
\date{}
%\maketitle

\begin{abstract}
This paper is devoted to studying the existence of normalized solutions for the following quasilinear Schr\"odinger equation
\begin{equation*}
\begin{aligned}
   -\Delta u-u\Delta u^2 +\lambda u=|u|^{p-2}u   \quad\mathrm{in}\ \mathbb{R}^{N},
\end{aligned}
\end{equation*}
where $N=3,4$, $\lambda$ appears as a Lagrange multiplier and $p \in (4+\frac{4}{N},2\cdot2^*]$. The solutions correspond to critical points of the energy functional subject to  the $L^2$-norm constraint $\int_{\mathbb{R}^N}|u|^2dx=a^2>0$. In the Sobolev critical case $p=2\cdot 2^*$, the energy functional has no critical point.
As for $L^2$-supercritical case $p \in (4+\frac{4}{N},2\cdot2^*)$: on the one hand, taking into account Pohozaev manifold and
perturbation method, we obtain the existence of ground state normalized solutions for the non-radial case; on the other hand, we get the existence of infinitely many normalized solutions in $H^1_r(\mathbb{R}^N)$. Moreover, our results cover several relevant existing results. And in the end, we get the asymptotic properties of energy as $a$ tends to $+\infty$ and $a$ tends to $0^+$.
\end{abstract}

\begin{keyword}
Quasilinear Schr\"odinger equations\sep Ground state, Multiple  solutions\sep Normalized solutions
\end{keyword}
\end{frontmatter}
{\bf Mathematics Subject Classification:} {\small 30C70, 35J10, 35J20, 35Q55}

\section{Introduction}\label{intro}
In this paper, we are concerned with the following quasilinear Schr\"odinger equation
\begin{equation}\label{eq01}
 -\Delta u-u\Delta u^2 +\lambda u=|u|^{p-2}u   \quad\mathrm{in}\ \mathbb{R}^{N},
\end{equation}
under the constraint
\begin{equation}\label{eq02}
\int_{\mathbb{R}^N}|u|^2dx=a^2,
\end{equation}
where $N=3,4$, $a>0$, $\lambda\in \mathbb{R}$ is a Lagrange multiplier and $p \in (4+\frac{4}{N},2\cdot2^*]$.

The interest in studying \eqref{eq01}-\eqref{eq02} comes from seeking the standing wave of the following time-dependent quasilinear Schr\"odinger equation
\begin{equation}\label{eq03}
i\partial_{t}\psi=\Delta\psi+\kappa \psi \Delta |\psi|^2+\sigma h(|\psi|)\psi,
\end{equation}
where $i$ denotes the imaginary unit, $\psi=\psi(t,x) \in \mathbb{C}$ is the wave function, $h$ is an approprite nonlinearity. Equations of the form \eqref{eq03} have been involved in models of superfluid films in fluid mechanics and plasma physics \cite{K1981, LS1978, PG1976}. And the equations play an important role in dissipative quantum mechanics, condensed matter theory, the theory of Heisenberg ferromagnets and magnons. As for more details on physical background of \eqref{eq03}, we refer the readers to \cite{BN1990,H1980,KIK1990,MF1984,QC1982}.

Throughout the paper, we consider standing  wave solutions to \eqref{eq03}, which are solutions of the form
$$\psi(t,x)=e^{-i\lambda t}u(x),$$
where $u \in H^1(\mathbb{R}^N)$, the frequency $\lambda \in \mathbb{R}$, then \eqref{eq03} can be transformed to \eqref{eq01} with $h(|u|)u=|u|^{p-2}u$.

Generally speaking, there are two  different points  concerning the form of standing wave solutions: one can either prescribe the frequency $\lambda$ or the $L^2$-norm $\|u\|_2$. In contrast with the search of solutions to \eqref{eq03} when the frequency $\lambda \in \mathbb{R}$ is a prior given, the search of normalized solutions becomes more complex. In the normalized setting, it is natural to prescribe the value of the mass $\int_{\mathbb{R}^N}|u|^2dx$ so that $\lambda$ can be interpreted as the Lagrange multiplier. Naturally, a new critical exponent appears, the $L^2$-critical exponent (also named mass-critical exponent): $p:=4+\frac{4}{N}$. It is the threshold exponent for many dynamical properties, such as global existence vs. blow-up, and the stability or instability of ground states. For further clarification, we agree that $L^2$-subcritical case and $L^2$-supercritical case mean that $p<4+\frac{4}{N}$ and $p>4+\frac{4}{N}$, respectively. Alternatively, the mass often admits a clear physical meaning: it represents the power supply in nonlinear optics, or the total number of atoms in Bose-Einstein condensation. They are two main fields of application of the NLS.

It is well known that the solutions for \eqref{eq01} admitting prescribed $L^2$-norm \eqref{eq02} are the critical points of the energy functional
$$I(u):=\frac{1}{2} \int_{\mathbb{R}^N} |\nabla u|^2 + \int_{\mathbb{R}^N} |u|^2 |\nabla u|^2 -\frac{1}{p}\int_{\mathbb{R}^N} |u|^{p} $$
restricted on $\mathcal{S}'(a):=\{u \in H^1(\mathbb{R}^N)\;| \;\int_{\mathbb{R}^N}|u|^2|\nabla u|^2 <+\infty, \int_{\mathbb{R}^N}|u|^2=a^2\}$. At this time, the frequency $\lambda$ is an unknown number that can be determined as the Lagrange multiplier associated to the constraint $\mathcal{S}'(a)$. In addition, it is quite meaningful to study normalized solutions. This is not only because mass is conserved along the trajectories of \eqref{eq03}, i.e.,
$$\int_{\mathbb{R}^N}|\psi(t,x)|^2 dx= \int_{\mathbb{R}^N}|\psi(0,x)|^2 dx$$
for all $t>0$, but also it can provide a good insight of the dynamical properties (such as, orbital stability and instability) of solutions to the equation \eqref{eq03}.

 We introduce some results about the existence of normalized solution. Jeanjean \cite{J1997} considered the semilinear Schr\"odinger equation
\begin{equation}\label{eq1.4}
-\triangle u +\lambda u =h(u),\;\;\; x \in \mathbb{R}^N,
\end{equation}
where $N \geq 1$, $\lambda \in \mathbb{R}$. By using of a minimax procedure, Jeanjean showed that for each $a>0$, \eqref{eq1.4} possesses at least one couple $(u_a,\lambda_a)$ of weak
solution with $\|u_a\|_2=a$ for $N\geq 2$. At the same time, he obtained the existence of ground states for $N\geq 1$. But, afterwards, there was little progress about the study of normalized solutions for
a long time. One of the main reasons is that it is hard to prove the boundedness of
constrained Palais-Smale sequence when the functional is unbounded from below on
the constraint manifold. More recently, problems of such type begun to receive much
attention. By virtue of a fountain theorem type argument, Bartsch and de Valeriola \cite{Bd2013} got a multiplicity result of \eqref{eq1.4} with $\|u\|_2=a>0$. Soave \cite{S2020} studied the existence and
properties of ground states to the nonlinear Schr\"odinger equation with combined
power nonlinearities. More results, we refer the readers to \cite{GZ2021, GY2024, LZ2023, LZ2024, ZhangZhitao, N2020, ZZ2022} and their references therein.

Compared to \eqref{eq1.4} where the term $u\Delta u^2$ is not present, the search of solutions of \eqref{eq01} and \eqref{eq02} presents a major difficulty. The functional associated with the quasilinear term $\int_{\mathbb{R}^N} |u|^2 |\nabla u|^2 $
is nondifferentiable in $\{u \in H^1(\mathbb{R}^N)\;| \;\int_{\mathbb{R}^N}|u|^2|\nabla u|^2 <+\infty \}$. Since the parameter $\lambda$ is unknown and $\|u\|_2$ is equal to a constant, we find that Nehari manifold approach \cite{LWW2004, LLW2013} and changing variables \cite{CJ2004, LWW2003} are no longer applicable. To overcome this difficulty, various arguments have been developed. One of the most important tasks is that Jeanjean, Luo and Wang \cite{JLW2015} introduced the perturbation method to deal with the normalized solutions of quasilinear Schr\"odinger equation.

As far as I know, there are relatively few results on \eqref{eq01} and \eqref{eq02}. In  \cite{CJS2010, JL2013}, the authors studied the minimization problem $\tilde{m}(a)=\inf_{u \in \mathcal{S}'(a)}\left\{\frac{1}{2} \int_{\mathbb{R}^N} |\nabla u|^2 + \int_{\mathbb{R}^N} |u|^2 |\nabla u|^2 -\frac{1}{p}\int_{\mathbb{R}^N} |u|^p\right\}$ with $2<p\leq 4+\frac{4}{N}$. In addition, the authors \cite{ZZ2018} considered the existence and asymptotic behavior of the minimizers to $\tilde{m}(a)=\inf_{u \in \mathcal{S}'(a)}\left\{\frac{1}{2} \int_{\mathbb{R}^N} \left(|\nabla u|^2 + V(x)|u|^2\right)+\int_{\mathbb{R}^N} |u|^2 |\nabla u|^2 -\frac{1}{p}\int_{\mathbb{R}^N} |u|^p\right\}$ with $2<p\leq 4+\frac{4}{N}$, where $V(x)$ is an infinite potential well. In a word, most of the results on normalized solution of \eqref{eq01} and \eqref{eq02} have been related to $L^2$-subcritical case and $L^2$-critical case. Specially, Li and Zou \cite{LZ2023} have considered  $L^2$-supcritical case with $p>4+\frac{4}{N}$, Mao and Lu \cite{2024New} have studied combination case of $L^2$-subcritical and $L^2$-supercritical
nonlinearities with $f(t)=\tau|t|^{q-2}t+|t|^{p-2}t$, $\tau>0$, $2<q<2+\frac{4}{N}$ and $p>4+\frac{4}{N}$. Recently, Gao and Guo \cite{GG2024} have proved the existence of a normalized solutions for  $L^2$-supcritical
quasilinear Schr\"odinger equation with potentials $V(x) \neq 0$.

First of all, we briefly consider the Sobolev critical case, and we can state the following non-existence result:

\begin{theorem}\label{Thm1}
Assume $N=3,4$ and $p=2\cdot2^*$, then $I(u)$ has no critical point on $\mathcal{S}'(a)$.
\end{theorem}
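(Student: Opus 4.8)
I would argue by contradiction. Suppose $u\in\mathcal{S}'(a)$ is a critical point of $I|_{\mathcal{S}'(a)}$ with Lagrange multiplier $\lambda\in\mathbb{R}$, so that $u$ solves \eqref{eq01}. Via the change of variables $v=G(u)$ with $G'(t)=\sqrt{1+2t^{2}}$, equation \eqref{eq01} becomes the semilinear equation
\begin{equation*}
-\Delta v=\frac{|u|^{p-2}u-\lambda u}{\sqrt{1+2u^{2}}},
\end{equation*}
whose right--hand side has, for $p=2\cdot2^{*}$, critical Sobolev growth in $v$; hence by Br\'ezis--Kato/Moser iteration $u$ is bounded and $u(x)\to0$ as $|x|\to\infty$, and the integrations by parts below are justified. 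Testing \eqref{eq01} with $u$ yields the Nehari--type identity
\begin{equation*}
\int_{\mathbb{R}^{N}}|\nabla u|^{2}+4\int_{\mathbb{R}^{N}}|u|^{2}|\nabla u|^{2}+\lambda a^{2}=\int_{\mathbb{R}^{N}}|u|^{p},
\end{equation*}
and the Pohozaev identity for \eqref{eq01} (equivalently, $\frac{d}{dt}\big|_{t=1}I\big(t^{N/2}u(t\,\cdot)\big)=0$, since $t^{N/2}u(t\,\cdot)\in\mathcal{S}'(a)$) reads
\begin{equation*}
\frac{N-2}{2}\int_{\mathbb{R}^{N}}|\nabla u|^{2}+(N-2)\int_{\mathbb{R}^{N}}|u|^{2}|\nabla u|^{2}+\frac{N\lambda}{2}a^{2}=\frac{N}{p}\int_{\mathbb{R}^{N}}|u|^{p}.
\end{equation*}
Substituting the first identity into the second, the coefficient of $\int_{\mathbb{R}^{N}}|u|^{2}|\nabla u|^{2}$ equals $(N-2)-\frac{4N}{p}$, which vanishes precisely because $p=2\cdot2^{*}$; what remains is $(N-2)\int_{\mathbb{R}^{N}}|\nabla u|^{2}+(N+2)\lambda a^{2}=0$, i.e.
\begin{equation*}
\lambda=-\frac{N-2}{(N+2)\,a^{2}}\int_{\mathbb{R}^{N}}|\nabla u|^{2}<0,
\end{equation*}
the strict inequality holding since $u\in\mathcal{S}'(a)$ with $a>0$ forces $u\not\equiv0$, hence $\nabla u\not\equiv0$. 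So every critical point would have strictly negative frequency.

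It remains to rule out $\lambda<0$, which is the heart of the matter. Dividing \eqref{eq01} by $1+2u^{2}$ and using $u(x)\to0$ to pick $R_{0}$ with $2u(x)^{2}\le1$ on $\Omega:=\{|x|>R_{0}\}$, one gets on $\Omega$, when $u\ge0$,
\begin{equation*}
-\Delta u=\frac{|u|^{p-2}u+|\lambda|u+2u|\nabla u|^{2}}{1+2u^{2}}\ \ge\ \frac{|\lambda|}{2}\,u,
\end{equation*}
and $u>0$ by the strong maximum principle. Testing this differential inequality with $\varphi^{2}/u$, $\varphi\in C_{c}^{\infty}(\Omega)$, and using $2\varphi\nabla\varphi\cdot(\nabla u/u)\le|\nabla\varphi|^{2}+\varphi^{2}|\nabla u|^{2}/u^{2}$, one is left with $\frac{|\lambda|}{2}\int_{\Omega}\varphi^{2}\le\int_{\Omega}|\nabla\varphi|^{2}$ for all such $\varphi$; choosing $\varphi$ supported in an annulus $\{R<|x|<3R\}\subset\Omega$ with $|\nabla\varphi|=O(R^{-1})$ forces the ratio $\int|\nabla\varphi|^{2}/\int\varphi^{2}=O(R^{-2})\to0$, contradicting $|\lambda|>0$. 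For sign--changing $u$ the same conclusion is reached through the transformed equation $-\Delta v=h(v)$ with $h(v)v>0$ and $h'(0)=|\lambda|$: for $\lambda<0$ this admits no nontrivial $H^{1}$--solution, for the same reason as the semilinear equation $-\Delta w+\lambda w=g(w)$ with $\lambda<0$. This contradiction shows $I$ has no critical point on $\mathcal{S}'(a)$.

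The sign of $\lambda$ is immediate once both identities are in hand; the genuine obstacle is the nonexistence step for $\lambda<0$, which rests on the Liouville/spectral fact that the bottom of the Dirichlet spectrum of $-\Delta$ on an exterior domain of $\mathbb{R}^{N}$ is $0$ (so a positive supersolution of $-\Delta-c$ with $c>0$ cannot exist there), together with the boundedness and decay of $u$. One should also be careful about the rigor of the Pohozaev identity, since $u\mapsto\int_{\mathbb{R}^{N}}|u|^{2}|\nabla u|^{2}$ is not $C^{1}$ on $H^{1}(\mathbb{R}^{N})$: it is cleanest to obtain it either from the semilinear equation satisfied by $v=G(u)$, or by the truncation/perturbation device already used elsewhere in the paper.
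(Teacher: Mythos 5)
Your proposal follows essentially the same two-step architecture as the paper: combine the Nehari identity with the Pohozaev identity, observe that at $p=2\cdot 2^*$ the quasilinear term drops out so that $\lambda<0$ is forced, and then rule out $\lambda<0$ by a Liouville-type argument. Your algebra in the first step is correct and is exactly the paper's computation (the paper phrases it as $(1-\gamma_{2\cdot 2^*})\int|\nabla u|^2=\lambda\gamma_{2\cdot 2^*}\int|u|^2$ with $\gamma_{2\cdot 2^*}=\frac{N+2}{4}>1$, which is your $\lambda=-\frac{N-2}{(N+2)a^2}\int|\nabla u|^2$).

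The only substantive difference is in the Liouville step. The paper (Lemma \ref{lambda}) replaces $u$ by $|u|$, applies the dual change of variables $v=f(|u|)=\int_0^{|u|}\sqrt{1+2t^2}\,dt$, observes that $\lambda\le 0$ makes $v$ a nonnegative superharmonic function lying in $L^{N/(N-2)}(\mathbb{R}^N)$, and concludes $v\equiv 0$ from a cited Liouville lemma. You instead run an exterior-domain spectral/capacity argument on the inequality $-\Delta u\ge\frac{|\lambda|}{2}u$, which requires the decay $u(x)\to 0$ and positivity of $u$ far out; that is fine for nonnegative solutions, but your treatment of sign-changing critical points (``for the same reason as the semilinear equation $-\Delta w+\lambda w=g(w)$ with $\lambda<0$'') is the weak link: nonexistence for sign-changing $H^1$ solutions with $\lambda<0$ is not a one-line consequence of the positive case, and rests on an absence-of-positive-eigenvalues (Kato/Agmon) type statement that you would need to invoke explicitly. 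The paper's route through $|u|$ and superharmonicity sidesteps this entirely, at the mild cost of justifying $\Theta(|u|,\lambda)=0$. Your closing remarks about the rigor of the Pohozaev identity for the nondifferentiable quasilinear term are well taken and consistent with how the paper sets things up.
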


Next, we consider the $L^2$-supcritical case. On the one hand, in the non-radial space $H^1(\mathbb{R}^N)$, we have

\begin{theorem}\label{Thm1.1'}
Assume $N=3$ and $4+\frac{4}{N}<p\leq 2^*$, then equations \eqref{eq01} and \eqref{eq02} admit a ground state normalized solution $u \in H^1(\mathbb{R}^N)\cap L^{\infty}(\mathbb{R}^N) $.
\end{theorem}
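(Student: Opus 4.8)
The plan is to combine the perturbation method of Jeanjean--Luo--Wang \cite{JLW2015} with a Pohozaev-manifold analysis. Since the quasilinear term makes $I$ non-differentiable on $\mathcal S'(a)$ and, in the $L^2$-supercritical regime, $I$ is unbounded from below on $\mathcal S'(a)$, one cannot work with $I$ directly. Instead, fix $\alpha\in\bigl(N,\tfrac{Np}{N+2}\bigr)$ (a nonempty interval because $p>4+\tfrac4N>N+2$ when $N=3$) and for $\theta\in(0,1]$ introduce
\[
I_\theta(u)=\frac{\theta}{\alpha}\int_{\mathbb{R}^N}|\nabla u|^\alpha+\frac12\int_{\mathbb{R}^N}|\nabla u|^2+\int_{\mathbb{R}^N}|u|^2|\nabla u|^2-\frac1p\int_{\mathbb{R}^N}|u|^p
\]
on $M_\alpha(a):=\{u\in W^{1,\alpha}(\mathbb{R}^N)\cap H^1(\mathbb{R}^N):\|u\|_2^2=a^2\}$. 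The choice of $\alpha$ makes $I_\theta$ of class $C^1$ on $M_\alpha(a)$ while leaving the scaling structure used below intact; note also that $p\le2^*<2\cdot2^*$, so the power nonlinearity is subcritical relative to the quasilinear structure, which is what makes the compactness below possible.

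For $u\in M_\alpha(a)$ set $s\star u(x):=e^{Ns/2}u(e^sx)$, which preserves $\|u\|_2$. A direct computation gives
\[
I_\theta(s\star u)=\frac{\theta e^{\gamma s}}{\alpha}\|\nabla u\|_\alpha^\alpha+\frac{e^{2s}}{2}\|\nabla u\|_2^2+e^{(N+2)s}\!\!\int_{\mathbb{R}^N}\!\!|u|^2|\nabla u|^2-\frac{e^{N(p-2)s/2}}{p}\|u\|_p^p,
\]
where $\gamma=\tfrac{\alpha(N+2)-2N}{2}>0$. Since $p>4+\tfrac4N$ yields $\tfrac{N(p-2)}{2}>\max\{2,N+2,\gamma\}$, we get $I_\theta(s\star u)\to0^+$ as $s\to-\infty$ and $I_\theta(s\star u)\to-\infty$ as $s\to+\infty$, so $I_\theta|_{M_\alpha(a)}$ has a mountain-pass geometry with level $c_\theta(a)>0$. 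Working on the augmented manifold $\mathbb{R}\times M_\alpha(a)$ following Jeanjean's device, I would produce a Palais--Smale sequence $\{u_n\}\subset M_\alpha(a)$ for $I_\theta|_{M_\alpha(a)}$ at $c_\theta(a)$ along which the Pohozaev functional $P_\theta(u)=\gamma\tfrac{\theta}{\alpha}\|\nabla u\|_\alpha^\alpha+\|\nabla u\|_2^2+(N+2)\int_{\mathbb{R}^N}|u|^2|\nabla u|^2-\tfrac{N(p-2)}{2p}\|u\|_p^p$ tends to $0$. Combining $I_\theta(u_n)\to c_\theta(a)$ with $P_\theta(u_n)\to0$ bounds $\{u_n\}$ in $H^1\cap W^{1,\alpha}$; after a translation (non-vanishing holds because $c_\theta(a)>0$ and $P_\theta(u_n)\to0$ force $\liminf\|u_n\|_p>0$) the weak limit is nonzero, and a standard compactness argument for the perturbed problem then yields a critical point $u_\theta$ of $I_\theta|_{M_\alpha(a)}$, with Lagrange multiplier $\lambda_\theta$, satisfying $I_\theta(u_\theta)=c_\theta(a)$.

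The next step is the limit $\theta\to0^+$. Testing the equation for $u_\theta$ and using $P_\theta(u_\theta)=0$, I would derive bounds on $\|u_\theta\|_{H^1}$, on $\theta\|\nabla u_\theta\|_\alpha^\alpha$ and on $\lambda_\theta$ that are uniform in $\theta\in(0,1]$, and show $\theta\|\nabla u_\theta\|_\alpha^\alpha\to0$ and $c_\theta(a)\to c(a)$. Then $u_\theta\rightharpoonup u$ in $H^1$ with $\lambda_\theta\to\lambda$, and $u$ solves \eqref{eq01}. \emph{The main obstacle is the compactness of this limit in the non-radial space}, i.e.\ ruling out loss of mass at infinity. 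For this I would identify $c(a)$ with $m(a):=\inf_{\mathcal P(a)}I$, where $\mathcal P(a):=\{u\in\mathcal S'(a):P(u)=0\}$ and $P$ is the unperturbed Pohozaev functional, prove $m(a)>0$ and $\lambda>0$, and establish the strict inequality $m(a')>m(a)$ for $0<a'<a$ (hence the binding inequality $m(a)<m(a_1)+m(a_2)$ whenever $a_1^2+a_2^2=a^2$, $a_1,a_2>0$); a concentration--compactness / Brezis--Lieb splitting of the translated sequence then excludes both vanishing and dichotomy, giving $\|u\|_2=a$.

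Finally, $u$ is a \emph{ground state}: every normalized solution $v$ of \eqref{eq01}--\eqref{eq02} satisfies its Pohozaev identity $P(v)=0$, hence lies on $\mathcal P(a)$ and obeys $I(v)\ge m(a)$, while weak lower semicontinuity together with $\theta\|\nabla u_\theta\|_\alpha^\alpha\to0$ yields $I(u)\le\liminf_{\theta\to0}I_\theta(u_\theta)=c(a)=m(a)$; therefore $I(u)=m(a)$. The inclusion $u\in L^\infty(\mathbb{R}^N)$ then follows from elliptic regularity for the quasilinear operator --- the equation may be written $-\operatorname{div}\bigl((1+2u^2)\nabla u\bigr)=|u|^{p-2}u-\lambda u-2u|\nabla u|^2$ --- together with a Moser iteration, the restriction $p\le2^*$ guaranteeing that the right-hand side starts the bootstrap.
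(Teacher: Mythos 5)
Your outline follows essentially the same route as the paper: a Jeanjean--Luo--Wang type perturbation $\frac{\theta}{\alpha}\int|\nabla u|^{\alpha}$ to restore differentiability, a Pohozaev-manifold/minimax construction of a bounded Palais--Smale sequence for the perturbed functional, monotonicity of the ground-state level in $a$ to rule out dichotomy in the non-radial setting, a vanishing-perturbation limit, positivity of the multiplier to recover $\|u\|_2=a$, and Moser iteration for the $L^{\infty}$ bound. The only structural difference is cosmetic (you invoke strict monotonicity/binding at the $\theta\to0$ stage, whereas the paper uses the non-strict monotonicity of the \emph{perturbed} level $m_\mu(a)$ already at the fixed-$\mu$ stage, combined with $I_\mu(v_\mu)>0$ for the nonzero weak limit), and your choice $\alpha\in(N,\frac{Np}{N+2})$ differs from the paper's $\theta\in(\frac{4N}{N+2},\min\{N,\frac{4N+4}{N+2}\})$; if you go that route you should additionally impose $\alpha<\frac{4N+4}{N+2}$ so that $\gamma<N+2$, which is what makes the coercivity estimate on the Pohozaev manifold (the analogue of Lemma \ref{lem2.5}(2)) come out with positive coefficients.

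One point you should correct before writing this up: your stated reasons for the restriction $p\le 2^*$ are not where it actually bites. Sobolev subcriticality of the power term is irrelevant here (the quasilinear term controls $L^{2\cdot 2^*}$ via \eqref{GNI}), and the Moser iteration works for all $p<2\cdot2^*$ --- indeed both are used in the radial case $p<2\cdot2^*$ of Theorem \ref{Thm1.2}. The restriction enters solely through the monotonicity of $a\mapsto m(a)$ (your ``strict inequality $m(a')>m(a)$'' step, the paper's Lemma \ref{lem6.1'}): the comparison map $u\mapsto ku$ with $k=a/a'\ge1$ decreases the fibered maximum only if $k^{p}\ge k^{p\gamma_p}$, i.e.\ $\gamma_p\le1$, i.e.\ $p\le2^*$. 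Since you have that hypothesis available the step goes through, but as written you risk asserting the monotonicity without realizing it is exactly the place where $p\le2^*$ is consumed (and why the non-radial result stops at $2^*$ while the radial one reaches $2\cdot2^*$).
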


\begin{remark}
In \cite{ZZJ2025} and \cite{ZCW2023},  the authors have shown that equations \eqref{eq01} and \eqref{eq02} admit a Schwarz symmetric ground state solution. In this case, the compactness naturally follows from the symmetry. However, in the absence of symmetry, as considered in this paper, compactness becomes more challenging to deal with. Since our approach relies on Lemma \ref{lem6.1'}, we are only able to establish the existence of a ground state for $4+\frac{4}{N}<p\leq 2^*$, rather than the broader range $4+\frac{4}{N}<p< 2\cdot2^*$. Lemma \ref{lem4.1'} can only be used to obtain the ground state solution; therefore, we are unable to derive multiple solutions.
\end{remark}

On the other hand, in the radial space $H^1_r(\mathbb{R}^N)$, our main result is

\begin{theorem}\label{Thm1.2}
Assume $N=3,4$ and $4+\frac{4}{N}<p<2\cdot2^*$, then equations \eqref{eq01} and \eqref{eq02} admit a sequence of radially symmetric normalized solutions $u^j \in H^1(\mathbb{R}^N)\cap L^{\infty}(\mathbb{R}^N) $ with increasing energy $I(u^j)\rightarrow +\infty$.
\end{theorem}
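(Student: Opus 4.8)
\medskip
\noindent\textbf{Proof strategy for Theorem \ref{Thm1.2}.}
The plan is to run a $\mathbb{Z}_{2}$-equivariant genus minimax on a Pohozaev-type manifold, combined with the Jeanjean--Luo--Wang perturbation device. Two difficulties dictate this choice. First, the quasilinear term $\int_{\mathbb{R}^{N}}|u|^{2}|\nabla u|^{2}$ makes $I$ merely continuous, not $C^{1}$, on $H^{1}(\mathbb{R}^{N})$. Second, since $p>4+\frac{4}{N}$, under the mass-preserving dilation $(s\star u)(x):=e^{Ns/2}u(e^{s}x)$ one has
\[
I(s\star u)=\tfrac12 e^{2s}\|\nabla u\|_{2}^{2}+e^{(N+2)s}\!\int_{\mathbb{R}^{N}}|u|^{2}|\nabla u|^{2}-\tfrac1p e^{\frac{(p-2)N}{2}s}\|u\|_{p}^{p}\longrightarrow-\infty \quad (s\to+\infty),
\]
because $\frac{(p-2)N}{2}>N+2$; hence $I$ is unbounded from below on $\mathcal{S}'(a)$ and no global minimization is available. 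I would therefore: (i) work in the radial space $H^{1}_{r}(\mathbb{R}^{N})$; (ii) for $\mu\in(0,1]$ and $\theta$ chosen in a suitable range (depending on $N$ and $p$) that makes the perturbed quasilinear functional of class $C^{1}$ on $X_{r}:=W^{1,\theta}_{r}(\mathbb{R}^{N})\cap H^{1}_{r}(\mathbb{R}^{N})$, consider the even $C^{1}$ functional $I_{\mu}(u):=\frac{\mu}{\theta}\int_{\mathbb{R}^{N}}|\nabla u|^{\theta}+I(u)$; (iii) pass to the Pohozaev manifold $\mathcal{P}_{\mu}(a):=\{u\in\mathcal{S}'(a)\cap X_{r}:P_{\mu}(u)=0\}$, where $P_{\mu}(u):=\frac{d}{ds}I_{\mu}(s\star u)\big|_{s=0}$. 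After using $P_{\mu}(u)=0$ to eliminate the $L^{p}$-term, $I_{\mu}$ restricted to $\mathcal{P}_{\mu}(a)$ is a combination of $\|\nabla u\|_{2}^{2}$, $\int_{\mathbb{R}^{N}}|u|^{2}|\nabla u|^{2}$ and the $\mu$-term with positive coefficients (the positivity for the first two is exactly the content of $p>4+\frac{4}{N}$, and for the $\mu$-term it is ensured by the choice of $\theta$), hence positive, even, and coercive. This converts the unbounded-below problem into a symmetric minimax for a well-behaved functional.

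For fixed $\mu$, I would first verify that $\mathcal{P}_{\mu}(a)$ is a natural constraint: $s\mapsto I_{\mu}(s\star u)$ has a unique maximum, so critical points of $I_{\mu}|_{\mathcal{P}_{\mu}(a)}$ are critical points of $I_{\mu}|_{\mathcal{S}'(a)\cap X_{r}}$, with a Lagrange multiplier $\lambda_{\mu}$. Then, with the Krasnoselskii genus $\gamma$ and the family $\mathcal{A}_{k}$ of compact symmetric subsets of $\mathcal{P}_{\mu}(a)$ of genus $\ge k$, set $c_{\mu,k}:=\inf_{A\in\mathcal{A}_{k}}\sup_{u\in A}I_{\mu}(u)$. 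Coercivity and evenness give that $c_{\mu,k}$ is finite, nondecreasing in $k$, and (via a Gagliardo--Nirenberg estimate for $u^{2}$ together with a standard genus/dimension argument) that $0<\beta(a)\le\gamma_{k}\le c_{\mu,k}\le c_{k}<\infty$ with $\gamma_{k}\to+\infty$, all bounds uniform in $\mu\in(0,1]$ (the upper bound from fixed, $\mu$-independent minimax sets built from finite-dimensional subspaces). The Palais--Smale condition for $I_{\mu}|_{\mathcal{P}_{\mu}(a)}$ then has to be checked: coercivity makes any PS sequence bounded in $X_{r}$; since for $u\in\mathcal{S}'(a)$ the function $u^{2}$ lies in $H^{1}_{r}(\mathbb{R}^{N})$ (here $N\le 4$), the compact radial embedding applied to $u^{2}$ yields strong convergence in $L^{q}$ for $2<q<2\cdot2^{*}$, which together with the strict subcriticality $p<2\cdot2^{*}$ and a monotonicity/Brezis--Lieb argument for the quasilinear term upgrades weak to strong convergence. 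An equivariant deformation lemma then makes each $c_{\mu,k}$ a critical value, producing even critical points $u_{\mu,k}\in\mathcal{S}'(a)\cap X_{r}$ of $I_{\mu}|_{\mathcal{S}'(a)\cap X_{r}}$ with $I_{\mu}(u_{\mu,k})=c_{\mu,k}$ and multipliers $\lambda_{\mu,k}$.

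Finally I would let $\mu\to 0^{+}$ with $k$ fixed. From $\gamma_{k}\le c_{\mu,k}\le c_{k}$ and the Pohozaev identity for $u_{\mu,k}$, one gets that $\{u_{\mu,k}\}_{\mu}$ is bounded in $H^{1}_{r}(\mathbb{R}^{N})$, that $\{\lambda_{\mu,k}\}_{\mu}$ is bounded, and that $\mu\|\nabla u_{\mu,k}\|_{\theta}^{\theta}\to 0$. Up to a subsequence, $u_{\mu,k}\rightharpoonup u^{k}$ in $H^{1}_{r}(\mathbb{R}^{N})$ and $\lambda_{\mu,k}\to\lambda^{k}$; the compact radial embedding for $u_{\mu,k}^{2}$, the subcriticality $p<2\cdot2^{*}$, and the vanishing $\mu$-term allow passage to the limit in the Euler--Lagrange equation and in the constraint, and a further monotonicity/Brezis--Lieb argument gives strong convergence, so $u^{k}\in\mathcal{S}'(a)$ solves \eqref{eq01}--\eqref{eq02} with multiplier $\lambda^{k}$ and $I(u^{k})=\sigma_{k}:=\lim_{\mu\to 0^{+}}c_{\mu,k}\in[\gamma_{k},c_{k}]$. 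Since $\sigma_{k}\ge\gamma_{k}>0$ we have $u^{k}\not\equiv 0$, and since $\gamma_{k}\to+\infty$ we have $\sigma_{k}\to+\infty$; extracting a strictly increasing subsequence of the $\sigma_{k}$ yields a sequence $u^{j}$ of distinct normalized solutions with $I(u^{j})\to+\infty$. The bound $u^{k}\in L^{\infty}(\mathbb{R}^{N})$ follows by rewriting \eqref{eq01} as $-\mathrm{div}\bigl((1+2(u^{k})^{2})\nabla u^{k}\bigr)=2u^{k}|\nabla u^{k}|^{2}-\lambda^{k}u^{k}+|u^{k}|^{p-2}u^{k}$ and running a Moser iteration, using the $H^{1}$-bound and $p<2\cdot2^{*}$.

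The main obstacle I expect is the compactness analysis, carried out \emph{uniformly in} $\mu$: verifying the Palais--Smale condition for $I_{\mu}$ on the perturbed Pohozaev manifold, and then proving in the limit $\mu\to 0^{+}$ that no mass escapes (so $u^{k}$ stays on $\mathcal{S}'(a)$) and that $u^{k}\ne 0$, while keeping $\lambda_{\mu,k}$ away from $0$ and $\infty$. This is exactly where radial symmetry (ruling out vanishing and translation) and the Pohozaev identity (converting energy bounds into $H^{1}$-bounds and controlling $\lambda_{\mu,k}$) are indispensable, and where the non-$C^{1}$, non-weakly-continuous quasilinear term $\int_{\mathbb{R}^{N}}|u|^{2}|\nabla u|^{2}$ must be handled consistently through both the perturbation and the $\mu\to 0^{+}$ limit; the strict inequality $p<2\cdot2^{*}$ is precisely what prevents that term from becoming Sobolev-critical.
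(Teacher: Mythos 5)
Your overall strategy coincides with the paper's: perturb $I$ by $\frac{\mu}{\theta}\int|\nabla u|^{\theta}$ to restore differentiability, project onto the Pohozaev manifold via the dilation fiber map, run a Krasnoselskii-genus minimax over symmetric compact sets to get levels $c^{j}_{\mu}(a)$ that are bounded below uniformly in $\mu$ and tend to $+\infty$ in $j$, obtain critical points of $I_{\mu}|_{\mathcal{S}(a)}$ for each fixed $\mu$ using radial compactness, and then send $\mu\to0^{+}$. The only cosmetic difference is that the paper runs the minimax on $\mathcal{S}_{r}(a)$ with the composed functional $\Psi_{\mu}(u)=I_{\mu}(s(u)*u)$ and Ghoussoub's $\tau$-homotopy-stable families, rather than directly on $\mathcal{P}_{\mu}(a)$ with an equivariant deformation lemma; these are interchangeable.

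There is, however, one genuine gap: you assert that the Palais--Smale sequences (at fixed $\mu$) and the family $\{u_{\mu,k}\}$ (as $\mu\to0^{+}$) converge strongly enough that the limit stays on $\mathcal{S}'(a)$, attributing this to the compact radial embedding and a Br\'ezis--Lieb argument. But the radial embedding $H^{1}_{r}\hookrightarrow L^{q}$ is compact only for $2<q<2^{*}$ (and, via $u^{2}$, up to $2\cdot2^{*}$), \emph{not} at $q=2$, so mass can still be lost in the weak limit. The paper does not claim strong $L^{2}$-convergence at fixed $\mu$: Lemma \ref{lem5.6} only yields $0<\|u^{j}_{\mu}\|_{2}\leq a$, with equality guaranteed only when the multiplier $\lambda^{j}_{\mu}\neq0$, and likewise Lemma \ref{lem4.1} gives $\|u\|_{2}=\lim a_{n}$ only if $\lambda\neq0$. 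The missing ingredient in your proposal is the proof that the limiting multiplier is nonzero. In the paper this is Lemma \ref{lambda}: one applies the dual change of variables $v=\int_{0}^{|u|}\sqrt{1+2t^{2}}\,dt$, obtains $-\Delta v\geq0$ with $v\in L^{N/(N-2)}$ when $\lambda\leq0$, and invokes a Liouville-type theorem to force $u\equiv0$, hence $\lambda>0$; this in turn gives $\lambda^{j}_{\mu^{j}_{n}}>0$ for large $n$ and recovers $\|u^{j}\|_{2}=a$. You flag ``keeping $\lambda_{\mu,k}$ away from $0$'' as an expected obstacle, but the Pohozaev identity and radial symmetry alone do not resolve it; without an argument of this Liouville type your construction only produces solutions with $0<\|u^{j}\|_{2}\leq a$, i.e.\ the prescribed mass constraint is not verified.
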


\begin{remark}
    Li and Zou \cite{LZ2023} have proved the existence of infinitely many radially symmetric solutions for $N=3$ and $4+\frac{4}{N}<p<2^*$. We used different methods to get better results. Theorem \ref{Thm1.2} not only include their results, but also generalize to $N=4$ and $4+\frac{4}{N}<p<2\cdot2^*$. Since Jeanjean, Zhang and Zhong \cite{ZZJ2025} have given a Schwarz symmetric ground state solution for $N=3,4$ and $4+\frac{4}{N}<p<2\cdot2^*$, we no longer deal with radially symmetric ground state solutions.
\end{remark}

It is easy to see that if $u$ is the critical point of $I |_{\mathcal{S}(a)}$, then $u$ satisfies the following Pohozaev identity
$$P (u):=\int_{\mathbb{R}^N} |\nabla u|^2 +
(N+2)\int_{\mathbb{R}^N} |u|^2 |\nabla u|^2 -\gamma_p \int_{\mathbb{R}^N} |u|^{p}.$$
We define
 $$m(a):=\inf_{u \in \mathcal{P}'(a)} I(u),$$
 $$\sigma(a):=\inf_{u \in \mathcal{S}'(a), \Theta(u,\lambda)=0} I(u),$$
  where $\mathcal{P}'(a):=\{u\in \mathcal{S}'(a)\ |\ P(u)=0 \}$, $\Theta(u,\lambda):=  -\Delta u-u\Delta u^2 -|u|^{p-2}u +\lambda u$. We can conclude $m(a)=\sigma(a)$ by \cite{ZZJ2025} or \cite{ZCW2023}.
 And we have the following results:
\begin{theorem}\label{Thm1.5}
   Assume $N \geq3$, for the function $a \mapsto m(a)$ we have

    (1) if $4+\frac{4}{N}<p<2\cdot2^*$, $m(a)$ is positive and lower semicontinuous;

    (2) if $4+\frac{4}{N}<p<2^*$, $m(a) \rightarrow 0^+$ as $a \rightarrow +\infty$;

    (3) if $4+\frac{4}{N}<p<2\cdot2^*$, $m(a) \rightarrow +\infty$ as $a \rightarrow 0^+$.
\end{theorem}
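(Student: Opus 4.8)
The plan is to reduce everything to three facts: the structure of $I$ on the Pohozaev manifold, a Gagliardo--Nirenberg inequality adapted to the quasilinear term, and the unique fiber zero of the mass-preserving dilation. Write $A(u)=\int_{\mathbb R^N}|\nabla u|^2$, $B(u)=\int_{\mathbb R^N}|u|^2|\nabla u|^2$, $C(u)=\int_{\mathbb R^N}|u|^p$ and $\gamma_p=\tfrac{N(p-2)}{2p}$, so that $\gamma_p p=\tfrac{N(p-2)}{2}$ exceeds $N+2$ exactly when $p>4+\tfrac4N$, while $\gamma_p<1$ exactly when $p<2^*$. On $\mathcal P'(a)$ the identity $P(u)=0$ reads $A(u)+(N+2)B(u)=\gamma_pC(u)$, hence
\[
I(u)=\Bigl(\tfrac12-\tfrac1{\gamma_p p}\Bigr)A(u)+\Bigl(1-\tfrac{N+2}{\gamma_p p}\Bigr)B(u)=:c_1A(u)+c_2B(u),\qquad c_1,c_2>0
\]
(using $\gamma_p p>N+2>2$). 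Putting $w=u^2$ (so $\|\nabla w\|_2^2=4B(u)$, $\|w\|_1=\|u\|_2^2$) and applying the classical Gagliardo--Nirenberg inequality to $w$ gives, for $2<p\le 2\cdot2^*$,
\[
C(u)\le C_{N,p}\,B(u)^{\alpha}\bigl(\|u\|_2^2\bigr)^{\beta},\qquad \alpha=\tfrac{N(p-2)}{2(N+2)},\ \ \beta=\tfrac{4N-(N-2)p}{2(N+2)},
\]
where $\alpha>1$ iff $p>4+\tfrac4N$ and $\beta>0$ iff $p<2\cdot2^*$. Finally, for $u\in\mathcal S'(a)\setminus\{0\}$ the dilation $u_t(x)=t^{N/2}u(tx)$ stays in $\mathcal S'(a)$, and $t\mapsto t^{-2}P(u_t)=A(u)+(N+2)t^NB(u)-\gamma_pt^{\gamma_p p-2}C(u)$ is strictly increasing then strictly decreasing, with value $A(u)\ge0$ at $t=0^+$ and limit $-\infty$; so it has a unique zero $t_0(u)>0$ with $u_{t_0(u)}\in\mathcal P'(a)$. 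In particular $\mathcal P'(a)\neq\emptyset$ and $m(a)<\infty$.

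\emph{Positivity and blow-up at $0^+$.} For $u\in\mathcal P'(a)$, inserting $C(u)\le C_{N,p}B(u)^\alpha a^{2\beta}$ into $(N+2)B(u)\le\gamma_pC(u)$ gives $(N+2)\le\gamma_pC_{N,p}B(u)^{\alpha-1}a^{2\beta}$; since $\alpha>1$ this forces $B(u)\ge K(a):=\bigl(\tfrac{N+2}{\gamma_pC_{N,p}a^{2\beta}}\bigr)^{1/(\alpha-1)}>0$, hence $I(u)\ge c_2K(a)$ and $m(a)\ge c_2K(a)>0$. This proves the positivity in (1); for (3), since $\beta>0$ (as $p<2\cdot2^*$) and $\alpha-1>0$ (as $p>4+\tfrac4N$) the power of $a$ in $K(a)$ is negative, so $m(a)\ge c_2K(a)\to+\infty$ as $a\to0^+$.

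\emph{Lower semicontinuity (rest of (1)).} Let $a_n\to a$ and, passing to a subsequence, assume $m(a_n)\to L:=\liminf_n m(a_n)<\infty$ (which is $>0$ by the previous step). Choose $u_n\in\mathcal P'(a_n)$ with $I(u_n)\le m(a_n)+n^{-1}$; the formula for $I$ on $\mathcal P'$ shows $A(u_n),B(u_n),C(u_n)$ bounded and $B(u_n)\ge K(a_n)$ bounded away from $0$. Rescale to the correct mass, $\tilde u_n:=\tfrac a{a_n}u_n\in\mathcal S'(a)$; then
\[
P(\tilde u_n)=\Bigl(\tfrac{a^2}{a_n^2}-1\Bigr)A(u_n)+(N+2)\Bigl(\tfrac{a^4}{a_n^4}-1\Bigr)B(u_n)-\gamma_p\Bigl(\tfrac{a^p}{a_n^p}-1\Bigr)C(u_n)\to0 .
\]
Let $w_n:=(\tilde u_n)_{t_n}\in\mathcal P'(a)$. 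Along a further subsequence $A(\tilde u_n)\to\bar A$, $B(\tilde u_n)\to\bar B>0$, $C(\tilde u_n)\to\bar C$ with $\bar A+(N+2)\bar B=\gamma_p\bar C$; comparing the polynomials $t^2A(\tilde u_n)+(N+2)t^{N+2}B(\tilde u_n)-\gamma_pt^{\gamma_p p}C(\tilde u_n)$ with their limit, whose unique positive zero is $t=1$, forces $t_n\to1$. Hence $I(w_n)=c_1t_n^2A(\tilde u_n)+c_2t_n^{N+2}B(\tilde u_n)\to c_1\bar A+c_2\bar B=\lim_nI(u_n)=L$, and $m(a)\le I(w_n)$ gives $m(a)\le L$.

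\emph{Decay as $a\to+\infty$ (here $p<2^*$), and the main obstacle.} Fix $\psi\in C_c^\infty(\mathbb R^N)\setminus\{0\}$ with $\|\psi\|_2=1$, set $\psi_a=a\psi\in\mathcal S'(a)$, $v_a:=(\psi_a)_{t_a}\in\mathcal P'(a)$, and write $A_0,B_0,C_0$ for $A,B,C$ of $\psi$. The relation $P(v_a)=0$ is $t_a^2a^2A_0+(N+2)t_a^{N+2}a^4B_0=\gamma_pt_a^{\gamma_p p}a^pC_0$. For $a\ge1$, either $t_a^Na^2\le A_0/B_0$, so the right side is $\le2A_0t_a^2a^2$ and $t_a^{\gamma_p p-2}\le\tfrac{2A_0}{\gamma_pC_0}a^{2-p}$; or $t_a^Na^2>A_0/B_0$, so it is $\le2(N+2)B_0t_a^{N+2}a^4$ and $t_a^{\gamma_p p-N-2}\le\tfrac{2(N+2)B_0}{\gamma_pC_0}a^{4-p}$. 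Since $\gamma_p p-2>0$, $\gamma_p p-N-2>0$, $p>4$, in both cases $t_a\to0$ polynomially, and $m(a)\le I(v_a)=c_1t_a^2a^2A_0+c_2t_a^{N+2}a^4B_0$ is $\lesssim a^2t_a^2\lesssim a^{2p(\gamma_p-1)/(\gamma_p p-2)}$ in the first case (negative exponent since $\gamma_p<1$) and $\lesssim a^4t_a^{N+2}\lesssim a^{(p(N-2)-4N)/(\gamma_p p-N-2)}$ in the second (negative exponent since $p<2^*$); either way $m(a)\to0$, so with $m(a)>0$ we get $m(a)\to0^+$. The genuinely delicate point is this last statement: the fiber equation only yields lower bounds on $t_a$ directly, so the dichotomy above is needed to extract the decaying upper bound, and the hypothesis $p<2^*$ (not merely $p<2\cdot2^*$) enters precisely to make the exponent in the second branch negative; the secondary technical point is converting $P(\tilde u_n)\to0$ into $t_n\to1$ in the lower semicontinuity argument, which relies on the uniqueness of the fiber zero together with the uniform lower bound $B\ge K(a_n)$.
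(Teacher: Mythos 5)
Your proof is correct and, in large part, takes a genuinely more elementary route than the paper's. The paper proves positivity via a rescaling onto a fixed level set of $\int_{\mathbb{R}^N}|u|^2|\nabla u|^2$ (Lemma \ref{lem2.5}(1) and Remark \ref{rem3.5}), proves lower semicontinuity (Lemma \ref{lem6.3}) through a two-step argument that requires a vanishing/non-vanishing dichotomy and translations in order to bound the fiber parameter $s(\tilde v_n)$, proves $m(a)\to+\infty$ (Lemma \ref{lem6.2}) by renormalizing minimizing sequences to unit kinetic energy and letting $s\to+\infty$ along the fiber, and proves $m(a)\to 0^+$ (Lemma \ref{lem6.1}) with the same test functions $au$ but controls $e^{s_a}$ by a contradiction argument resting on the pointwise inequality $|t|^p\ge M|t|^{2^*}$. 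You instead extract everything from the scalar identity $I=c_1A+c_2B$ on $\mathcal{P}'(a)$, the Gagliardo--Nirenberg bound $C\le C_{N,p}B^{\alpha}a^{2\beta}$ with $\alpha>1$, $\beta>0$, and the explicit one-variable fiber polynomial: this yields the quantitative bound $m(a)\ge c_2K(a)$ with $K(a)\sim a^{-2\beta/(\alpha-1)}$, which gives positivity and statement (3) simultaneously and with a rate, and it reduces lower semicontinuity to convergence of the three numbers $A,B,C$ together with $t_n\to1$, bypassing concentration-compactness entirely (your terse step ``$t_n\to1$'' does hold: dividing the fiber equation by $t_n^{N+2}$ rules out $t_n\to0$ using $\bar B>0$, dividing by $t_n^{\gamma_p p}$ rules out $t_n\to\infty$ using $\bar C>0$, and uniqueness of the limit polynomial's positive zero does the rest). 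Your treatment of (2) is the same in spirit as the paper's but replaces the contradiction argument by a dichotomy with explicit decay rates. Two small corrections: the constant $2$ in your dichotomy should be $N+3$ (harmless), and your closing remark attributes the hypothesis $p<2^*$ to the wrong branch --- the first-branch exponent $2p(\gamma_p-1)/(\gamma_p p-2)$ is the one that is negative precisely when $\gamma_p<1$, i.e.\ $p<2^*$, whereas the second-branch exponent $(p(N-2)-4N)/(\gamma_p p-N-2)$ is already negative for all $p<2\cdot2^*$.
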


%\begin{remark}
   % As far as we know, no one has studied asymptotic properties of $m(a)$ and $\sigma(a)$ in $L^2$-supercritical case. At the same time, since  quasilinear term is non-differential in $H^1(\mathbb{R}^N)$,  we can only obtain that $m(a)$ is less than $\sigma(a)$.
%\end{remark}

    We complete the introduction by sketching the structure of the paper. In Section 2, we define the perturbed functional, discuss the geometry of $I_\mu$ and give the properties of Pohozaev manifold. In Section 3, we give the proof of Theorem \ref{Thm1}. In Section 4, we deal with the case of non-radial space and get Theorem \ref{Thm1.1'}. In Section 5, we consider the radial space $H^1_r(\mathbb{R}^N)$ and finish the proof of Theorem \ref{Thm1.2}. In Section 6, we study the lower semi-continuity of  $m(a)$ and the asymptotic properties of  $m(a)$, furthermore, we complete the proof of Theorem \ref{Thm1.5}.

\textbf{Notation.}
Throughout this paper we make use of the following notations.  \\
$\bullet$ The standard norm in $L^s(\mathbb{R}^N) (1\leq s\leq \infty)$ is denoted by $\|\cdot\|_s$.  \\
$\bullet$ $o_n(1)$ means a quantity which tends to 0 as $n$ tends to $\infty$.   \\
$\bullet$ The symbols $\rightarrow$ and $\rightharpoonup$ denote the strong and the weak convergence, respectively.  \\
$\bullet$ C, $C_\varepsilon, C_q$ stand for various constants whose exact values are irrelevant.

\section{Preliminaries}\label{preliminary}
In this section,  we assume $N=3,4$ and $p \in \left(4+\frac{4}{N}, 2\cdot2^*\right)$.
First of all, we take the perturbation method to deal with the nondifferentiability, which has been applied to constrained situation in \cite{JLW2015}.

For $\mu \in (0,1]$ and $\theta \in \left(\frac{4N}{N+2},\min\left\{N,\frac{4N+4}{N+2}\right\}\right)$, we consider the functional $I_\mu: \Upsilon \mapsto \mathbb{R} $
\begin{equation} \label{eq2-1}
I_{\mu}(u):=\frac{\mu}{\theta} \int_{\mathbb{R}^N} |\nabla u|^{\theta}+I(u),
\end{equation}
where $\Upsilon =W^{1,\theta}(\mathbb{R}^N)\cap H^1(\mathbb{R}^N)$. It is clear that $I_\mu \in \mathcal{C}^1(\Upsilon)$. At the same time, we define
\begin{equation} \label{eq2-2}
\mathcal{S}(a):=\left\{u \in \Upsilon \;\bigg|\;\int_{\mathbb{R}^N}|u|^2dx=a^2\right\}.
\end{equation}

It is easy to see that if $u$ is the critical point of $I_\mu |_{\mathcal{S}(a)}$, then $u$ satisfies the following Pohozaev identity
$$P_\mu (u):=\mu(1+\gamma_\theta) \int_{\mathbb{R}^N} |\nabla u|^{\theta} +\int_{\mathbb{R}^N} |\nabla u|^2 +
(N+2)\int_{\mathbb{R}^N} |u|^2 |\nabla u|^2 -\gamma_p \int_{\mathbb{R}^N} |u|^p ,$$
where $\gamma_\theta=\frac{N(\theta-2)}{2\theta}$.

By the Gagliardo-Nirenberg inequality \cite{A2008},
for any $u \in \{u \in L^2(\mathbb{R}^N)\;| \;\nabla (u^2) \in L^2(\mathbb{R}^N) \}$, there exists a constant $C_q >0$ such that
 \begin{equation}\label{GNI}
 \int_{\mathbb{R}^N} |u|^q \leq C_q\left (\int_{\mathbb{R}^N} |u|^2\right)^{\frac{4N-q(N-2)}{2(N+2)}}\left(4\int_{\mathbb{R}^N} |u|^2 |\nabla u|^2\right)^{\frac{N(q-2)}{2(N+2)}},\;\; \mbox{for}\;\; \mbox{any}\;\; q \in [2,2\cdot2^*].
 \end{equation}
Inspired by \cite{J1997}, we consider the $L^2$-norm preserved transform
\begin{equation} \label{eq2-5}
s*u(x)=e^{\frac{N}{2}s} u(e^sx),
\end{equation}
it is easy to see that
$$I_\mu(s*u)=\frac{\mu}{\theta} e^{\theta(1+\gamma_\theta)s}  \int_{\mathbb{R}^N} |\nabla u|^{\theta} + \frac{1}{2}e^{2s} \int_{\mathbb{R}^N} |\nabla u|^2 + e^{(N+2)s}\int_{\mathbb{R}^N} |u|^2 |\nabla u|^2 -\frac{1}{p}e^{p\gamma _ps}\int_{\mathbb{R}^N}|u|^p ,$$
\begin{align*}
\frac{d}{ds}I_\mu(s*u)=&\;\mu(1+\gamma_\theta) e^{\theta(1+\gamma_\theta)s}  \int_{\mathbb{R}^N} |\nabla u|^{\theta} + e^{2s} \int_{\mathbb{R}^N} |\nabla u|^2 + (N+2)e^{(N+2)s}\int_{\mathbb{R}^N} |u|^2 |\nabla u|^2  \\
&-\gamma_pe^{p\gamma _ps}\int_{\mathbb{R}^N}|u|^p.
\end{align*}
As a result,
$$P_\mu(u)=\frac{d}{ds}\bigg|_{s=0}I_\mu(s*u).$$

Next, we need to establish the following important lemmas.

\begin{lemma} \label{lem2.2}
There exists a constant $\zeta >0$ such that when $u \in \mathcal{S}(a)$ with $\int_{\mathbb{R}^N} |u|^2 |\nabla u|^2 <\zeta $, we have
\begin{equation}\label{eq2-10}
\frac{1}{4}\left(\int_{\mathbb{R}^N} |\nabla u|^2 +\int_{\mathbb{R}^N} |u|^2 |\nabla u|^2\right)\leq I_\mu(u) \leq   \frac{\mu}{\theta} \int_{\mathbb{R}^N} |\nabla u|^{\theta} +\int_{\mathbb{R}^N} |\nabla u|^2 +\int_{\mathbb{R}^N} |u|^2 |\nabla u|^2,
\end{equation}
\begin{equation}\label{eq2-10-1}
P_\mu (u)\geq \frac{1}{2} \left(\int_{\mathbb{R}^N} |\nabla u|^2 +\int_{\mathbb{R}^N} |u|^2 |\nabla u|^2\right).
\end{equation}
\end{lemma}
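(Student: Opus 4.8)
The plan is to treat the three inequalities by absorbing the nonlinear term $\frac1p\int_{\mathbb{R}^N}|u|^p$ (respectively $\gamma_p\int_{\mathbb{R}^N}|u|^p$) into the quasilinear quantity $\int_{\mathbb{R}^N}|u|^2|\nabla u|^2$, using that $p>4+\frac4N$. The upper bound in \eqref{eq2-10} is immediate: since $-\frac1p\int_{\mathbb{R}^N}|u|^p\le 0$ and $\frac12\int_{\mathbb{R}^N}|\nabla u|^2\le\int_{\mathbb{R}^N}|\nabla u|^2$, dropping the negative term in the definition \eqref{eq2-1} of $I_\mu$ gives exactly the claimed right-hand side. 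So the real content is the two lower bounds, and for these only the interplay between $\int_{\mathbb{R}^N}|u|^p$ and $\int_{\mathbb{R}^N}|u|^2|\nabla u|^2$ matters; the perturbation terms $\frac{\mu}{\theta}\int_{\mathbb{R}^N}|\nabla u|^\theta$ and $\mu(1+\gamma_\theta)\int_{\mathbb{R}^N}|\nabla u|^\theta$ are nonnegative and will simply be discarded, which is precisely what makes the threshold $\zeta$ independent of $\mu$ and $\theta$.

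To absorb the $L^p$ term I would apply the Gagliardo--Nirenberg inequality \eqref{GNI} with $q=p$. Writing $\alpha_p:=\frac{4N-p(N-2)}{2(N+2)}$ and $\beta_p:=\frac{N(p-2)}{2(N+2)}$, and using $\int_{\mathbb{R}^N}|u|^2=a^2$ for $u\in\mathcal{S}(a)$, \eqref{GNI} yields
$$\int_{\mathbb{R}^N}|u|^p\le C_p\,4^{\beta_p}\,a^{2\alpha_p}\Big(\int_{\mathbb{R}^N}|u|^2|\nabla u|^2\Big)^{\beta_p}.$$
Here $\alpha_p\ge 0$ because $p\le 2\cdot2^*$, and, crucially, $\beta_p>1$ precisely because $p>4+\frac4N$ (indeed $\beta_p>1\iff N(p-2)>2(N+2)\iff p>4+\frac4N$). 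Setting $t:=\int_{\mathbb{R}^N}|u|^2|\nabla u|^2$, the last display reads $\int_{\mathbb{R}^N}|u|^p\le C_p\,4^{\beta_p}a^{2\alpha_p}\,t^{\beta_p-1}\cdot t$ with $\beta_p-1>0$, so the factor $t^{\beta_p-1}\to 0$ as $t\to 0^+$. Hence I can fix $\zeta=\zeta(a,N,p,C_p)>0$ small enough that $t<\zeta$ forces simultaneously
$$\frac1p\int_{\mathbb{R}^N}|u|^p\le\frac14\,t\qquad\text{and}\qquad\gamma_p\int_{\mathbb{R}^N}|u|^p\le\frac12\,t.$$

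With these two estimates in hand, the conclusions follow by arithmetic. For \eqref{eq2-10}, discard $\frac{\mu}{\theta}\int_{\mathbb{R}^N}|\nabla u|^\theta\ge 0$ in $I_\mu(u)$ and use the first estimate:
$$I_\mu(u)\ge\frac12\int_{\mathbb{R}^N}|\nabla u|^2+\int_{\mathbb{R}^N}|u|^2|\nabla u|^2-\frac14\int_{\mathbb{R}^N}|u|^2|\nabla u|^2=\frac12\int_{\mathbb{R}^N}|\nabla u|^2+\frac34\int_{\mathbb{R}^N}|u|^2|\nabla u|^2,$$
which is $\ge\frac14\big(\int_{\mathbb{R}^N}|\nabla u|^2+\int_{\mathbb{R}^N}|u|^2|\nabla u|^2\big)$. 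For \eqref{eq2-10-1}, discard $\mu(1+\gamma_\theta)\int_{\mathbb{R}^N}|\nabla u|^\theta\ge 0$ in $P_\mu(u)$ and use the second estimate:
$$P_\mu(u)\ge\int_{\mathbb{R}^N}|\nabla u|^2+(N+2)\int_{\mathbb{R}^N}|u|^2|\nabla u|^2-\frac12\int_{\mathbb{R}^N}|u|^2|\nabla u|^2\ge\frac12\Big(\int_{\mathbb{R}^N}|\nabla u|^2+\int_{\mathbb{R}^N}|u|^2|\nabla u|^2\Big),$$
since $N+2-\frac12\ge\frac12$. There is no serious obstacle here: the only point requiring a little care is that $\zeta$ must be taken uniform in $\mu\in(0,1]$ and in $\theta$ (achieved by throwing away the nonnegative perturbation terms rather than estimating them), and the hypothesis $p>4+\frac4N$ is used exactly once, to guarantee $\beta_p>1$ so that the superlinear power $t^{\beta_p}$ is dominated by a fixed fraction of $t$ for small $t$.
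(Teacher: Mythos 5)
Your proposal is correct and follows essentially the same route as the paper: apply the Gagliardo--Nirenberg inequality \eqref{GNI} with $q=p$ to bound $\int_{\mathbb{R}^N}|u|^p$ by a power $\beta_p=\frac{N(p-2)}{2(N+2)}>1$ of $\int_{\mathbb{R}^N}|u|^2|\nabla u|^2$ (the paper leaves the superlinearity implicit where you make it explicit via $p>4+\frac{4}{N}$), then choose $\zeta$ small enough to absorb the $L^p$ term into a fixed fraction of the quasilinear term and conclude by the same arithmetic.
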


\begin{proof}
For any $u\in \mathcal{S}(a)$, taking into account \eqref{GNI}  we can deduce that
\begin{align*}
 \int_{\mathbb{R}^N} |u|^p &\leq C\left(\int_{\mathbb{R}^N} |u|^2 |\nabla u|^2\right)^{\frac{N(p-2)}{2(N+2)}}.
\end{align*}
It is clear that there exists a constant $\zeta' >\int_{\mathbb{R}^N} |u|^2 |\nabla u|^2$ such that
\begin{align*}
\left|\frac{1}{p}\int_{\mathbb{R}^N} |u|^p\right| \leq \frac{1}{4} \int_{\mathbb{R}^N} |u|^2 |\nabla u|^2 .
\end{align*}
Therefore,
\begin{align*}
\frac{1}{4}\left(\int_{\mathbb{R}^N} |\nabla u|^2 +\int_{\mathbb{R}^N} |u|^2 |\nabla u|^2\right)\leq I_\mu(u) \leq   \frac{\mu}{\theta} \int_{\mathbb{R}^N} |\nabla u|^{\theta} +\int_{\mathbb{R}^N} |\nabla u|^2 +\int_{\mathbb{R}^N} |u|^2 |\nabla u|^2.
\end{align*}

Similarly, there exists a constant $\zeta'' >\int_{\mathbb{R}^N} |u|^2 |\nabla u|^2$ such that
\begin{align*}
\left|\gamma_p\int_{\mathbb{R}^N} |u|^p\right| \leq \frac{1}{4} \int_{\mathbb{R}^N} |u|^2 |\nabla u|^2.
\end{align*}
Moreover,
\begin{align*}
P_\mu (u)&=\mu(1+\gamma_\theta) \int_{\mathbb{R}^N} |\nabla u|^{\theta} +\int_{\mathbb{R}^N} |\nabla u|^2 +(N+2)\int_{\mathbb{R}^N} |u|^2 |\nabla u|^2 -\gamma_p\int_{\mathbb{R}^N} |u|^p \\
&\geq \frac{1}{2} \left(\int_{\mathbb{R}^N} |\nabla u|^2 +\int_{\mathbb{R}^N} |u|^2 |\nabla u|^2\right).
\end{align*}

In the end, we may set $\zeta=\min\{\zeta',\zeta''\}$. Therefore, the conclusion is valid.

\end{proof}

Now, we introduce some properties about $I_\mu(s*u)$.
\begin{lemma} \label{lem2.3}
For any $u \in \Upsilon \backslash \{0\}$, we have

(1) $I_\mu(s*u) \rightarrow 0^+$ as $s\rightarrow -\infty;$

(2) $I_\mu(s*u) \rightarrow -\infty $ as $s\rightarrow +\infty.$
\end{lemma}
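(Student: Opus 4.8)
The plan is to work directly from the explicit expression for $I_\mu(s*u)$ computed above,
$$I_\mu(s*u)=\frac{\mu}{\theta} e^{\theta(1+\gamma_\theta)s}  \int_{\mathbb{R}^N} |\nabla u|^{\theta} + \frac{1}{2}e^{2s} \int_{\mathbb{R}^N} |\nabla u|^2 + e^{(N+2)s}\int_{\mathbb{R}^N} |u|^2 |\nabla u|^2 -\frac{1}{p}e^{p\gamma_p s}\int_{\mathbb{R}^N}|u|^p,$$
and to compare the four exponents $2$, $\theta(1+\gamma_\theta)$, $N+2$ and $p\gamma_p=\frac{N(p-2)}{2}$. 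First I would record two elementary facts about $\theta(1+\gamma_\theta)=\theta+\frac{N(\theta-2)}{2}$: evaluating at the endpoints of the admissible interval gives $\theta(1+\gamma_\theta)=N$ at $\theta=\frac{4N}{N+2}$ and $\theta(1+\gamma_\theta)=N+2$ at $\theta=\frac{4N+4}{N+2}$, and since $\theta\mapsto\theta(1+\gamma_\theta)$ is strictly increasing, the assumption $\theta\in\left(\frac{4N}{N+2},\frac{4N+4}{N+2}\right)$ yields $N<\theta(1+\gamma_\theta)<N+2$; in particular $\theta(1+\gamma_\theta)>2$. Next, from $p>4+\frac4N$ we obtain $p\gamma_p=\frac{N(p-2)}{2}>\frac{N}{2}\left(2+\frac4N\right)=N+2$. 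Altogether the exponents are strictly ordered as $2<\theta(1+\gamma_\theta)<N+2<p\gamma_p$.

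For part (1), as $s\to-\infty$ every exponential tends to $0$, so $I_\mu(s*u)\to 0$; to pin down the sign I would factor out $e^{2s}$, which is the slowest-decaying term because $2$ is the smallest exponent, obtaining
$$I_\mu(s*u)=e^{2s}\left(\tfrac12\int_{\mathbb{R}^N}|\nabla u|^2+o(1)\right)\qquad\text{as }s\to-\infty.$$
Since $u\in\Upsilon\setminus\{0\}\subset H^1(\mathbb{R}^N)\setminus\{0\}$ and a nonzero $L^2(\mathbb{R}^N)$ function cannot have vanishing gradient, $\int_{\mathbb{R}^N}|\nabla u|^2>0$, so $I_\mu(s*u)>0$ for $s$ sufficiently negative, and hence $I_\mu(s*u)\to 0^+$.

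For part (2), as $s\to+\infty$ the term with the largest exponent dominates; factoring out $e^{p\gamma_p s}$ gives
$$I_\mu(s*u)=e^{p\gamma_p s}\left(-\tfrac1p\int_{\mathbb{R}^N}|u|^p+o(1)\right)\qquad\text{as }s\to+\infty,$$
and since $u\not\equiv 0$ we have $\int_{\mathbb{R}^N}|u|^p>0$, whence $I_\mu(s*u)\to-\infty$.

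The argument is essentially a bookkeeping of exponents in the one-variable function $s\mapsto I_\mu(s*u)$, so the only step demanding care — and where I expect to spend the most effort — is the verification of the strict ordering $2<\theta(1+\gamma_\theta)<N+2<p\gamma_p$. This is precisely the point at which both the admissible range of $\theta$ and the $L^2$-supercriticality condition $p>4+\frac4N$ are used, and it is also what guarantees that the quadratic gradient term governs the behavior near $-\infty$ while the nonlinear term governs it near $+\infty$.
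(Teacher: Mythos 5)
Your proof is correct and follows essentially the same route as the paper: both arguments read off the asymptotics of the explicit one-variable function $s\mapsto I_\mu(s*u)$ from the ordering of the exponents $2<\theta(1+\gamma_\theta)<N+2<p\gamma_p$. The only cosmetic difference is in part (1), where the paper sandwiches $I_\mu(s*u)$ between $0$ and a vanishing quantity via Lemma \ref{lem2.2}, whereas you extract the leading term $e^{2s}\cdot\tfrac12\int_{\mathbb{R}^N}|\nabla u|^2$ directly and use $\int_{\mathbb{R}^N}|\nabla u|^2>0$; and in part (2) the paper factors out $e^{(N+2)s}$ rather than $e^{p\gamma_p s}$, which changes nothing of substance.
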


\begin{proof}
(1) There holds $\int_{\mathbb{R}^N} |s*u|^2 |\nabla (s*u)|^2 <\zeta $ for any $s$ satisfying $s<0$ and $|s|$ large enough. It follows from Lemma \ref{lem2.2} that
\begin{align*}
0\leq I_\mu(s*u) \leq \frac{\mu}{\theta} e^{\theta(1+\gamma_\theta)s}  \int_{\mathbb{R}^N} |\nabla u|^{\theta} + e^{2s} \int_{\mathbb{R}^N} |\nabla u|^2 + e^{(N+2)s}\int_{\mathbb{R}^N} |u|^2 |\nabla u|^2 \rightarrow 0^+,
\end{align*}
 as $s\rightarrow -\infty$.

(2) It is easy to see that
\begin{align*}
I_\mu(s*u) =&\ \frac{\mu}{\theta} e^{\theta(1+\gamma_\theta)s}  \int_{\mathbb{R}^N} |\nabla u|^{\theta} + \frac{1}{2}e^{2s} \int_{\mathbb{R}^N} |\nabla u|^2 + e^{(N+2)s}\int_{\mathbb{R}^N} |u|^2 |\nabla u|^2 -\frac{1}{p}e^{p\gamma _ps}\int_{\mathbb{R}^N}|u|^p \\
 \leq &\  e^{(N+2)s}\bigg(\frac{\mu}{\theta} e^{-[(N+2)-\theta(1+\gamma_\theta)]s}  \int_{\mathbb{R}^N} |\nabla u|^{\theta} + \frac{1}{2}e^{-Ns} \int_{\mathbb{R}^N} |\nabla u|^2 + \int_{\mathbb{R}^N} |u|^2 |\nabla u|^2   \\
 &-\frac{1}{p}e^{[p\gamma _p-(N+2)]s}\int_{\mathbb{R}^N}|u|^p\bigg) \\
 & \rightarrow -\infty,
\end{align*}
as $s\rightarrow +\infty$.
\end{proof}

We define the Pohozaev manifold $\mathcal{P}_\mu(a):=\{u\in \mathcal{S}(a)\ |\ P_\mu(u)=0 \}$ and obtain the following properties about the Pohozaev manifold.

\begin{lemma}\label{lem2.4}
For any $u \in \Upsilon \backslash \{0\}$, we have

(1) there exists a unique number $s(u) \in \mathbb{R}$ such that $P_\mu(s(u)*u)=0$, $I_\mu(s(u)*u)=\max_{s\in \mathbb{R}} I_\mu(s*u) >0$;

(2) the mapping $u\mapsto s(u)$ is well defined and continuous on $ \Upsilon \backslash \{0\}$;

(3) for any $y \in \mathbb{R}^N$, $s(u(\cdot+y)) = s(u)$ and $s(-u) = s(u)$ .

\end{lemma}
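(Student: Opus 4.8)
The plan is to exploit the explicit one-variable structure of $s \mapsto I_\mu(s*u)$. Fix $u \in \Upsilon\setminus\{0\}$ and write $\varphi_u(s) := I_\mu(s*u)$, which from the preliminaries equals
\[
\varphi_u(s)=\frac{\mu}{\theta}e^{\theta(1+\gamma_\theta)s}A + \frac{1}{2}e^{2s}B + e^{(N+2)s}C - \frac{1}{p}e^{p\gamma_p s}D,
\]
where $A=\int|\nabla u|^\theta>0$, $B=\int|\nabla u|^2\ge 0$, $C=\int|u|^2|\nabla u|^2\ge 0$, $D=\int|u|^p>0$. The first step for part (1) is to record the exponent ordering. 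Since $\theta\in\big(\frac{4N}{N+2},\min\{N,\frac{4N+4}{N+2}\}\big)$ one checks $\theta(1+\gamma_\theta) = \theta + \frac{N(\theta-2)}{2} < N+2$ (equivalent to $\theta<\frac{4N+4}{N+2}$), and also $\theta(1+\gamma_\theta)>2$, so the three ``positive'' exponents satisfy $2 \le \theta(1+\gamma_\theta) < N+2$; meanwhile $p\gamma_p = \frac{N(p-2)}{2} > N+2$ because $p>4+\frac4N$. Hence the negative term has the strictly largest exponent. By Lemma~\ref{lem2.3}, $\varphi_u(s)\to 0^+$ as $s\to-\infty$ and $\varphi_u(s)\to-\infty$ as $s\to+\infty$; since $A>0$ and $D>0$, $\varphi_u$ is strictly positive for $s$ large negative (the $A$-term dominates), so $\max_{s\in\mathbb{R}}\varphi_u(s)>0$ is attained at some interior point $s(u)$, where $\varphi_u'(s(u))=0$, i.e. $P_\mu(s(u)*u)=0$.

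For the uniqueness — which is the main obstacle — I would show $\varphi_u$ has exactly one critical point. The cleanest route is to observe that $\varphi_u'(s)=0$ is equivalent, after dividing by $e^{2s}$, to
\[
g(s):=\mu(1+\gamma_\theta)e^{(\theta(1+\gamma_\theta)-2)s}A + (N+2)e^{Ns}C + B = \gamma_p e^{(p\gamma_p-2)s}D.
\]
The right-hand side is a strictly increasing function of $s$ (since $p\gamma_p-2>0$ and $D>0$) ranging over $(0,+\infty)$. If $C>0$, the left-hand side $g$ is also strictly increasing (all exponents on the left are $\ge 0$: $\theta(1+\gamma_\theta)-2\ge 0$ and $N>0$), but it has the strictly smaller maximal exponent $\max\{\theta(1+\gamma_\theta)-2,\,N\} < p\gamma_p-2$; a standard convexity/log-derivative comparison (or: the ratio RHS/$g$ is strictly increasing from $0$ to $+\infty$) then forces a unique crossing. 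If $C=0$ the same argument applies with $g(s)=\mu(1+\gamma_\theta)e^{(\theta(1+\gamma_\theta)-2)s}A + B$. Either way $s(u)$ is unique, and since $\varphi_u$ is smooth with $\varphi_u\to 0^+$, $-\infty$ at the ends, this unique critical point is necessarily the global maximum. This proves (1).

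For (2), continuity of $u\mapsto s(u)$ follows from the implicit function theorem applied to $(u,s)\mapsto P_\mu(s*u)$ at $(u,s(u))$: it remains to check $\partial_s P_\mu(s*u)|_{s=s(u)} = \varphi_u''(s(u)) \neq 0$, which holds because at the unique critical point the function transitions from increasing to decreasing, so $\varphi_u''(s(u))\le 0$, and equality would contradict uniqueness (a degenerate critical point would, given the sign structure — the negative term eventually dominating — force a second one); alternatively one argues directly by the definition of $s(u)$ as the location of the maximum together with uniqueness, using that $A,B,C,D$ depend continuously on $u$ in $\Upsilon$ and a standard compactness-free limiting argument (if $u_n\to u$ but $s(u_n)\not\to s(u)$, pass to a subsequence $s(u_n)\to s_\infty\in[-\infty,+\infty]$; the boundary cases are excluded because $P_\mu(s(u_n)*u_n)=0$ with coefficients bounded away from $0$ and $\infty$, and then $P_\mu(s_\infty * u)=0$ forces $s_\infty=s(u)$). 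Also $u\mapsto s(u)$ is well defined precisely because of the uniqueness in (1). Finally (3) is immediate: the coefficients $A,B,C,D$ are invariant under the replacements $u\mapsto u(\cdot+y)$ (translation invariance of the integrals, and $s*(u(\cdot+y)) = (s*u)(\cdot + e^{-s}y)$ so $\varphi_{u(\cdot+y)} = \varphi_u$) and $u\mapsto -u$ (the integrands depend only on $|u|$, $|\nabla u|$, $|u|^2|\nabla u|^2$), hence $\varphi_{u(\cdot+y)}=\varphi_{-u}=\varphi_u$ and the location of the maximum is unchanged, giving $s(u(\cdot+y))=s(-u)=s(u)$.
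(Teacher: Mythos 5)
Your proposal is correct and takes essentially the same route as the paper: existence of the maximizer from the limits in Lemma \ref{lem2.3}, uniqueness from a monotonicity argument (the paper shows $e^{-(N+2)s}P_\mu(s*u)$ is strictly decreasing, which is the same observation as your ``ratio RHS/$g$ strictly increasing from $0$ to $+\infty$'' after renormalizing by a different exponential), continuity by the subsequence argument excluding $s(u_n)\to\pm\infty$, and (3) by change of variables. The one soft spot is your implicit-function-theorem route, where the claim that a degenerate critical point ``would force a second one'' is not valid in general (think of a quartic-type maximum); however, the nondegeneracy $\varphi_u''(s(u))<0$ does follow from the same strict monotonicity used for uniqueness, and in any case your fallback limiting argument --- which is what the paper actually uses --- closes this gap.
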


\begin{proof}
(1) It is easy to see that
$$ \frac{d}{ds}I_\mu(s*u)=P_\mu(s*u).$$
By using of Lemma \ref{lem2.3}, there exists some $s(u) \in \mathbb{R}$ such that $I_\mu(s*u)$ achieves the global maximum at $s(u)$ and $I_\mu(s(u)*u)>0$. Then, $ P_\mu(s(u)*u)=0.$

In what follows, we prove the uniqueness of $s(u)$. Suppose that $s_1(u)<s_2(u)$ such that
$$P_\mu(s_1(u)*u)=0=P_\mu(s_2(u)*u).$$
In view of the facts that
\begin{align*}
&\;e^{-(N+2)s}P_\mu(s*u)  \\
=&\;\mu(1+\gamma_\theta) e^{-[(N+2)-\theta(1+\gamma_\theta)]s}  \int_{\mathbb{R}^N} |\nabla u|^{\theta} + e^{-Ns} \int_{\mathbb{R}^N} |\nabla u|^2 + (N+2)\int_{\mathbb{R}^N} |u|^2 |\nabla u|^2 -\gamma_p e^{[p\gamma _p-(N+2)]s}\int_{\mathbb{R}^N}|u|^p   \\
=&:Q_\mu(s,u)
\end{align*}
and $Q_\mu(s,u)$ is strictly decreasing with respect to $s \in \mathbb{R}$,
we get $s_1(u)=s_2(u)$.

(2) By (1), it is easy to see that the mapping $u\mapsto s(u)$ is well defined on $ \Upsilon \backslash \{0\}.$
Let $\{u_n\} \subset \Upsilon \backslash \{0\}$ be any sequence satisfying $ u_n \rightarrow u \in \Upsilon \backslash \{0\}$ in $\Upsilon $.

\textbf{Claim:}
 $\{s(u_n)\}$ is bounded in $\mathbb{R}.$

Up to a subsequence, if $s(u_n) \rightarrow +\infty$ as $n \rightarrow \infty$, then
\begin{align*}
0\leq&\ e^{-(N+2)s(u_n)} I_\mu(s(u_n)*u_n) \\
\leq&\ \frac{\mu}{\theta}e^{-[(N+2)-\theta(1+\gamma_\theta)]s(u_n)}  \int_{\mathbb{R}^N} |\nabla u_n|^{\theta} + \frac{1}{2}e^{-Ns(u_n)} \int_{\mathbb{R}^N} |\nabla u_n|^2 + \int_{\mathbb{R}^N} |u_n|^2 |\nabla u_n|^2 \\
&-\frac{1}{p} e^{[p\gamma _p-(N+2)]s(u_n)}\int_{\mathbb{R}^N}|u|^p   \\
\rightarrow& -\infty,
\end{align*}
as $n \rightarrow \infty$, a contradiction, which implies that $\{s(u_n)\}$ is bounded from above.

Up to a subsequence, if $s(u_n) \rightarrow -\infty$ as $n \rightarrow \infty$, then it follows from $s(u)*u_n \rightarrow s(u)*u$ in $\Upsilon$ that
\begin{align*}
0&<I_\mu(s(u)*u) \\
&=\lim_{n\rightarrow \infty}I_\mu(s(u)*u_n) \\
&\leq \liminf_{n\rightarrow \infty}I_\mu(s(u_n)*u_n) \\
&\leq \liminf_{n\rightarrow \infty}\left( \frac{1}{\theta} \int_{\mathbb{R}^N} |\nabla (s(u_n)*u_n)|^{\theta} + \frac{1}{2}\int_{\mathbb{R}^N} |\nabla (s(u_n)*u_n)|^2 + \int_{\mathbb{R}^N} |s(u_n)*u_n|^2 |\nabla (s(u_n)*u_n)|^2 \right) \\
&\leq \liminf_{n\rightarrow \infty} \left(\frac{1}{\theta}e^{\theta(1+\gamma_\theta)s(u_n)}  \int_{\mathbb{R}^N} |\nabla u_n|^{\theta} + \frac{1}{2}e^{2s(u_n)} \int_{\mathbb{R}^N} |\nabla u_n|^2 + e^{(N+2)s(u_n)}\int_{\mathbb{R}^N} |u_n|^2 |\nabla u_n|^2 \right) \\
&=0,
\end{align*}
a contradiction. This shows $\{s(u_n)\}$ is bounded in $\mathbb{R}$.

Without loss of generality, we may assume that $s(u_n) \rightarrow s$ as $n \rightarrow \infty$, which together with the fact $u_n \rightarrow u$ in $\Upsilon$ yields that $s(u_n)*u_n \rightarrow s*u$ in $\Upsilon$. Hence, we get that
\begin{align*}
&\ P_\mu(s*u) \\
=&\ \mu(1+\gamma_\theta) \int_{\mathbb{R}^N} |\nabla (s*u)|^{\theta} + \int_{\mathbb{R}^N} |\nabla (s*u)|^2 + (N+2)\int_{\mathbb{R}^N} |s*u|^2 |\nabla (s*u)|^2 -\gamma_p \int_{\mathbb{R}^N} |s*u|^p  \\
=&\lim_{n\rightarrow \infty} \bigg[ \mu(1+\gamma_\theta) \int_{\mathbb{R}^N} |\nabla (s(u_n)*u_n)|^{\theta} + \int_{\mathbb{R}^N} |\nabla (s(u_n)*u_n)|^2 + (N+2)\int_{\mathbb{R}^N} |s(u_n)*u_n|^2 |\nabla (s(u_n)*u_n)|^2 \\
&-\gamma_p \int_{\mathbb{R}^N} |s(u_n)*u_n|^p \bigg] \\
= &\lim_{n\rightarrow \infty} P_\mu(s(u_n)*u_n) \\
= &\;0.
\end{align*}
Thus, we can conclude that $s(u)=s$ and $s(u_n) \rightarrow s(u)$ as $n \rightarrow \infty$.

(3) For any $y \in \mathbb{R}^N$, by changing variables in the integrals, we have
$$P_\mu(s(u)*u(\cdot + y)) = P_\mu(s(u)*u)=0,$$
thus $s(u(\cdot+ y)) = s(u)$ via (1). And it is clear that
$$P_\mu(s(u)*(-u)) = P_\mu(-s(u)*u)=P_\mu(s(u)*u)=0,$$
hence $s(-u)=s(u).$
\end{proof}

\begin{remark}\label{rem02.5}
    (1) For any $u \in \mathcal{S}(a)$, it follows from $s(u)*u \in \mathcal{P}_\mu(a)$ that $I_\mu(s(u)*u) \geq \inf_{u \in \mathcal{P}_\mu(a)}I_\mu(u)$, which yields
$\inf_{u \in \mathcal{S}(a)}\max_{s \in \mathbb{R}}I_\mu(s*u) \geq \inf_{u \in \mathcal{P}_\mu(a)}I_\mu(u)$. On the other hand, for any $u \in \mathcal{P}_\mu(a)$, one has $I_\mu(u) = I_\mu(0*u) \geq \inf_{u \in \mathcal{S}(a)}I_\mu(s(u)*u)$, i.e., $\inf_{u \in \mathcal{S}(a)}\max_{s \in \mathbb{R}}I_\mu(s*u) \leq \inf_{u \in \mathcal{P}_\mu(a)}I_\mu(u)$.
So we can conclude
$$\inf_{u \in \mathcal{P}_\mu(a)}I_\mu(u)=\inf_{u \in \mathcal{S}(a)}\max_{s \in \mathbb{R}}I_\mu(s*u).$$
(2) For $P(u)$, it is easy to see (1) and (3) are valid; since the quasilinear term is discontinuous in $H^1(\mathbb{R}^N)$, we can not obtain (2), but we can get $s(u_n) \rightarrow s(u)$ for $\int_{\mathbb{R}^N} |u_n|^2 |\nabla u_n|^2 \rightarrow \int_{\mathbb{R}^N} |u|^2 |\nabla u|^2$ and $u_n \rightarrow u \neq 0$ in $H^1(\mathbb{R}^N)$.   \\
(3) It is clear that we can write $s(u)$ as $s_\mu(u)$. For any $0<\mu_1<\mu_2<1$, we have
$$Q_{\mu_1}(s_{\mu_1}(u),u)=0=Q_{\mu_2}(s_{\mu_2}(u),u)>Q_{\mu_1}(s_{\mu_2}(u),u),$$
which implies $s_{\mu_2}(u)>s_{\mu_1}(u)$. So, there exist a $C(u)$ such that $s_\mu(u) \leq C(u)$ for any fixed $u \in \Upsilon \backslash \{0\}$ and $0<\mu<1$.
\end{remark}

\begin{lemma}\label{lem2.5}
For $\mu \in (0,1]$, we have

(1) $m_\mu(a):=\inf_{u \in \mathcal{P}_\mu(a)} I_\mu(u) \geq \frac{\zeta}{8}>0$;

(2) let $\{u_n\} \subset \mathcal{P}_\mu(a)$, if  $\sup_{n \in \mathbb{N}^+} I_\mu(u_n)<+\infty$, then there exists a constant $C>0$ such that
$$ \mu \int_{\mathbb{R}^N} |\nabla u_n|^{\theta}\leq C,\; \int_{\mathbb{R}^N} |\nabla u_n|^2 \leq C,\;  \int_{\mathbb{R}^N} |u_n|^2 |\nabla u_n|^2 \leq C, \;for\;any \;n \in \mathbb{N}^+.$$
\end{lemma}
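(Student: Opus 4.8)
\textbf{Proof proposal for Lemma \ref{lem2.5}.}

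The plan is to prove (1) and (2) together, using the key observation that on the Pohozaev manifold $\mathcal{P}_\mu(a)$ the constraint $P_\mu(u)=0$ allows us to rewrite $I_\mu(u)$ as a positive combination of the ``good'' terms $\int |\nabla u|^\theta$, $\int |\nabla u|^2$, $\int |u|^2|\nabla u|^2$, with the sign-indefinite term $\int |u|^p$ eliminated. Concretely, for $u \in \mathcal{P}_\mu(a)$ I would use $P_\mu(u)=0$ to solve for $\gamma_p \int |u|^p = \mu(1+\gamma_\theta)\int|\nabla u|^\theta + \int|\nabla u|^2 + (N+2)\int|u|^2|\nabla u|^2$, substitute this into $I_\mu(u)$, and obtain
\[
I_\mu(u) = \left(\frac{\mu}{\theta} - \frac{\mu(1+\gamma_\theta)}{p\gamma_p}\right)\int_{\mathbb{R}^N}|\nabla u|^\theta + \left(\frac12 - \frac{1}{p\gamma_p}\right)\int_{\mathbb{R}^N}|\nabla u|^2 + \left(1 - \frac{N+2}{p\gamma_p}\right)\int_{\mathbb{R}^N}|u|^2|\nabla u|^2 .
\]
Since $p > 4 + \frac{4}{N}$ gives $p\gamma_p = \frac{N(p-2)}{2} > N+2 > 2$, each of the three coefficients of $\int|\nabla u|^2$ and $\int|u|^2|\nabla u|^2$ is strictly positive; for the $\theta$-term one checks $\frac{1}{\theta} > \frac{1+\gamma_\theta}{p\gamma_p}$, i.e. $\theta(1+\gamma_\theta) < p\gamma_p$, which follows from $\theta < \frac{4N+4}{N+2}$ (so $\theta(1+\gamma_\theta) = \theta + \gamma_\theta \theta = \theta + \frac{N(\theta-2)}{2} = \frac{(N+2)\theta - 2N}{2} < N+2 \le p\gamma_p$). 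Hence all three coefficients, call them $c_1(\mu), c_2, c_3 > 0$, are positive, and in particular $I_\mu(u) \ge c_2 \int|\nabla u|^2 + c_3 \int|u|^2|\nabla u|^2 > 0$ for $u \ne 0$.

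For part (1), I would argue by the dichotomy from Lemma \ref{lem2.2}. Fix $u \in \mathcal{P}_\mu(a)$. Either $\int|u|^2|\nabla u|^2 \ge \zeta$, in which case $I_\mu(u) \ge c_3 \int|u|^2|\nabla u|^2 \ge c_3 \zeta$; or $\int|u|^2|\nabla u|^2 < \zeta$, in which case \eqref{eq2-10-1} gives $0 = P_\mu(u) \ge \frac12(\int|\nabla u|^2 + \int|u|^2|\nabla u|^2) \ge 0$, forcing $u = 0$, which is impossible on $\mathcal{S}(a)$ with $a>0$. Actually, to get the clean bound $\frac{\zeta}{8}$ stated in the lemma, I would instead combine \eqref{eq2-10} with the first case: when $\int|u|^2|\nabla u|^2 < \zeta$ we reach a contradiction as above, so necessarily $\int|u|^2|\nabla u|^2 \ge \zeta$, and then the left inequality of \eqref{eq2-10} yields $I_\mu(u) \ge \frac14(\int|\nabla u|^2 + \int|u|^2|\nabla u|^2) \ge \frac{\zeta}{4} \ge \frac{\zeta}{8}$; taking the infimum gives $m_\mu(a) \ge \frac{\zeta}{8} > 0$. (The slack in $\frac{\zeta}{8}$ versus $\frac{\zeta}{4}$ is harmless and leaves room if one prefers the $c_3\zeta$ route with a possibly smaller constant.)

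For part (2), suppose $\{u_n\} \subset \mathcal{P}_\mu(a)$ with $\sup_n I_\mu(u_n) =: M < +\infty$. From the displayed identity $I_\mu(u_n) = c_1(\mu)\,\mu^{-1}\cdot\mu\int|\nabla u_n|^\theta + c_2 \int|\nabla u_n|^2 + c_3\int|u_n|^2|\nabla u_n|^2$ — more precisely $I_\mu(u_n)$ equals the sum of three nonnegative terms, one of which is $(\frac{1}{\theta} - \frac{1+\gamma_\theta}{p\gamma_p})\,\mu\int|\nabla u_n|^\theta$ — each term is bounded above by $M$. Dividing by the (positive, $\mu$-independent in the case of $c_2, c_3$) coefficients gives $\mu\int|\nabla u_n|^\theta \le C$, $\int|\nabla u_n|^2 \le C$, and $\int|u_n|^2|\nabla u_n|^2 \le C$ for all $n$, with $C$ depending only on $M$, $\theta$, $p$, $N$. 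This is exactly the claimed bound. I expect the only real point requiring care is verifying the strict positivity of the three coefficients — in particular the $\theta$-coefficient, which is where the hypothesis $\theta < \min\{N, \frac{4N+4}{N+2}\}$ enters (the bound $\theta < \frac{4N+4}{N+2}$ is precisely what makes $\theta(1+\gamma_\theta) < N+2 \le p\gamma_p$); everything else is a direct substitution using $P_\mu(u)=0$ together with Lemma \ref{lem2.2}.
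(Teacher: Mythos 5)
Your part (2) is correct and takes a genuinely different, and in fact cleaner, route than the paper's. The exact substitution of $\gamma_p\int_{\mathbb{R}^N}|u|^p$ via $P_\mu(u)=0$ keeps a strictly positive coefficient $1-\frac{N+2}{p\gamma_p}$ in front of $\int_{\mathbb{R}^N}|u|^2|\nabla u|^2$ (your check of the positivity of all three coefficients, including the $\theta$-coefficient via $\theta(1+\gamma_\theta)<N+2<p\gamma_p$, is correct), so all three bounds drop out simultaneously. The paper instead replaces $\gamma_p\int_{\mathbb{R}^N}|u_n|^p$ by the smaller quantity $\frac{N+2}{p}\int_{\mathbb{R}^N}|u_n|^p$, which makes the coefficient of $\int_{\mathbb{R}^N}|u_n|^2|\nabla u_n|^2$ vanish; it therefore only obtains the bounds on $\mu\int_{\mathbb{R}^N}|\nabla u_n|^\theta$ and $\int_{\mathbb{R}^N}|\nabla u_n|^2$ directly, and must run a separate, lengthy contradiction argument (a vanishing/non-vanishing dichotomy for the rescaled sequence $v_n=(-s_n)*u_n$) to control the quasilinear term. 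Your identity renders that entire second half unnecessary.

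Part (1), however, contains a genuine error in the step that produces the constant $\frac{\zeta}{8}$. Lemma \ref{lem2.2} asserts \eqref{eq2-10} and \eqref{eq2-10-1} only under the hypothesis $\int_{\mathbb{R}^N}|u|^2|\nabla u|^2<\zeta$. Your dichotomy correctly uses \eqref{eq2-10-1} to rule out $\int_{\mathbb{R}^N}|u|^2|\nabla u|^2<\zeta$ on $\mathcal{P}_\mu(a)$, but you then invoke the left inequality of \eqref{eq2-10} precisely in the complementary regime $\int_{\mathbb{R}^N}|u|^2|\nabla u|^2\geq\zeta$, where Lemma \ref{lem2.2} gives you nothing --- and where the inequality actually fails in general, since by \eqref{GNI} the term $\frac1p\int_{\mathbb{R}^N}|u|^p$ grows like $\left(\int_{\mathbb{R}^N}|u|^2|\nabla u|^2\right)^{\frac{N(p-2)}{2(N+2)}}$ with exponent strictly greater than $1$ and eventually dominates. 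Your fallback bound $I_\mu(u)\geq c_3\zeta$ with $c_3=1-\frac{N+2}{p\gamma_p}$ is correct and $\mu$-independent, which is all the paper ever uses of this lemma downstream, but it does not establish the stated constant (and $c_3\to 0$ as $p$ decreases to $4+\frac4N$). The paper obtains $\frac{\zeta}{8}$ by a rescaling trick you are missing: for $u\in\mathcal{P}_\mu(a)$ one has $I_\mu(u)=\max_{s\in\mathbb{R}}I_\mu(s*u)$ by Lemma \ref{lem2.4}(1), so choosing $s$ with $e^{-(N+2)s}\int_{\mathbb{R}^N}|u|^2|\nabla u|^2=\frac{\zeta}{2}$ and setting $\tilde u:=(-s)*u$ gives $I_\mu(u)\geq I_\mu(\tilde u)$, where $\tilde u$ \emph{does} satisfy the smallness hypothesis of Lemma \ref{lem2.2}, whence $I_\mu(\tilde u)\geq\frac14\cdot\frac{\zeta}{2}=\frac{\zeta}{8}$. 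Replace your $\frac{\zeta}{4}$ step with this argument, or keep your route and state the conclusion with the constant $c_3\zeta$ instead.
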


\begin{proof}
(1) For $u \in \mathcal{P}_\mu(a)$, we take $s$ such that
$$ e^{-(N+2)s}\int_{\mathbb{R}^N} |u|^2 |\nabla u|^2=\frac{\zeta}{2}.$$
Set $\tilde{u}:=(-s)*u$, it is clear that
$$\int_{\mathbb{R}^N} |\tilde{u}|^2 |\nabla \tilde{u}|^2=\frac{\zeta}{2}<\zeta,$$
 by using of Lemma \ref{lem2.2}, we can deduce that
\begin{align*}
&\; I_\mu(u) =I_\mu(0*u) \geq I_\mu(-s*u)  \\
\geq&\; \frac{1}{4} \left(\int_{\mathbb{R}^N} |\nabla \tilde{u}|^2 +\int_{\mathbb{R}^N} |\tilde{u}|^2 |\nabla \tilde{u}|^2\right) \\
\geq&\; \frac{\zeta}{8} >0.
\end{align*}

(2) From $u_n \in \mathcal{P}_\mu(a)$, we can deduce that
\begin{align*}
&\ \mu(1+\gamma_\theta) \int_{\mathbb{R}^N} |\nabla u_n|^{\theta} +\int_{\mathbb{R}^N} |\nabla u_n|^2 +(N+2)\int_{\mathbb{R}^N} |u_n|^2 |\nabla u_n|^2 \\
=& \ \gamma_p \int_{\mathbb{R}^N} |u_n|^p
>\ \frac{(N+2)}{p}\int_{\mathbb{R}^N} |u_n|^p.
\end{align*}
Obviously,
\begin{align*}
I_\mu(u_n)&=\frac{\mu}{\theta} \int_{\mathbb{R}^N} |\nabla u_n|^{\theta} +\frac{1}{2}\int_{\mathbb{R}^N} |\nabla u_n|^2 +\int_{\mathbb{R}^N} |u_n|^2 |\nabla u_n|^2 -\frac{1}{p} \int_{\mathbb{R}^N} |u_n|^p \\
&>\frac{(N+2)-\theta(1+\gamma_\theta)}{(N+2)\theta} \mu \int_{\mathbb{R}^N} |\nabla u_n|^{\theta} +\frac{N}{2(N+2)}\int_{\mathbb{R}^N} |\nabla u_n|^2,
\end{align*}
which implies
$$ \mu \int_{\mathbb{R}^N} |\nabla u_n|^{\theta}\leq C\;\;\mbox{and} \; \int_{\mathbb{R}^N} |\nabla u_n|^2 \leq C.$$
Next, we prove that
$$\limsup_{n \rightarrow \infty}\int_{\mathbb{R}^N} |u_n|^2 |\nabla u_n|^2 \leq C.$$
Assume that up to a subsequence, $\int_{\mathbb{R}^N} |u_n|^2 |\nabla u_n|^2 \rightarrow +\infty$ as $n \rightarrow \infty$. Take $s_n$ such that
$$e^{-\theta(1+\gamma_\theta)s_n} \left(\mu \int_{\mathbb{R}^N} |\nabla u_n|^{\theta}\right) +e^{-2s_n} \int_{\mathbb{R}^N} |\nabla u_n|^2 + e^{-(N+2)s_n}\int_{\mathbb{R}^N} |u_n|^2 |\nabla u_n|^2=1.$$
Set $v_n:=(-s_n)*u_n$, it is clear that
$$ \mu \int_{\mathbb{R}^N} |\nabla v_n|^{\theta} + \int_{\mathbb{R}^N} |\nabla v_n|^2 +\int_{\mathbb{R}^N} |v_n|^2 |\nabla v_n|^2=1,\;  \int_{\mathbb{R}^N} |v_n|^2= \int_{\mathbb{R}^N} |u_n|^2=a^2\;\; \mbox{and}\;\; s(v_n)=s_n \rightarrow +\infty.$$
Let
$$\rho :=\limsup_{n \rightarrow \infty} \sup_{y \in \mathbb{R}^N} \int_{B_1(y)} |v_n|^2.$$

\textbf{Case 1:}
 If $\rho=0$, then $v_n \rightarrow 0$ in $L^{2+\frac{4}{N}}(\mathbb{R}^N)$ by \cite[Lemma 1.21]{Willem}. It follows from the Interpolation inequality that for any $n>0$ large enough, we have
\begin{align*}
    \int_{\mathbb{R}^N} |v_n|^p &\leq \left(\int_{\mathbb{R}^N} |v_n|^{2\cdot2^*} \right)^\frac{[(p-2)N-4](N-2)}{2(N^2+4)}\left(\int_{\mathbb{R}^N} |v_n|^{2+\frac{4}{N}} \right)^\frac{[4N-p(N-2)]N}{2(N^2+4)}  \\
    &\leq C\left(\int_{\mathbb{R}^N} |v_n|^2 |\nabla v_n|^2\right)^\frac{[(p-2)N-4]N}{2(N^2+4)}\left(\int_{\mathbb{R}^N} |v_n|^{2+\frac{4}{N}} \right)^\frac{[4N-p(N-2)]N}{2(N^2+4)}  \\
    &\leq \varepsilon,
\end{align*}
which means
\begin{equation}\label{eq02-12}
    \lim_{n \rightarrow \infty} \frac{1}{p}e^{p\gamma_ps}\int_{\mathbb{R}^N} |v_n|^p=0 \;\; \mbox{for} \;\; \mbox{any}\;\; s>0.
\end{equation}
Since $P_\mu(s(v_n)*v_n)=P_\mu(u_n)=0$, we obtain that for $s>0$,
\begin{align*}
    C&\geq I_\mu(s(v_n)*v_n) \geq I_\mu(s*v_n) \\
      &= \frac{1}{\theta} e^{\theta(1+\gamma_\theta)s} \left(\mu \int_{\mathbb{R}^N} |\nabla v_n|^{\theta}\right) + \frac{1}{2}e^{2s} \int_{\mathbb{R}^N} |\nabla v_n|^2 + e^{(N+2)s}\int_{\mathbb{R}^N} |v_n|^2 |\nabla v_n|^2 -\frac{1}{p}e^{p\gamma_ps}\int_{\mathbb{R}^N} |v_n|^p \\
      &\geq \frac{1}{N}e^{2s} \left(\mu \int_{\mathbb{R}^N} |\nabla v_n|^{\theta} + \int_{\mathbb{R}^N} |\nabla v_n|^2 +\int_{\mathbb{R}^N} |v_n|^2 |\nabla v_n|^2\right) +o_n(1) \\
      &=\frac{1}{N}e^{2s} +o_n(1).
\end{align*}
Clearly, this leads to contradiction for $s>\max\left\{0,\frac{\ln(NC)}{2}\right\}$.

\textbf{Case 2:}
    If $\rho >0$, up to a subsequence, we may assume the existence of $\{y_n\} \subset \mathbb{R}^N$ such that
    $$\int_{B_1(y_n)} |v_n|^2> \frac{\rho}{2},$$
by changing variables in the integrals, we have
 $$\int_{B_1(0)} |v_n(\cdot +y_n)|^2> \frac{\rho}{2}.$$
Since $\{v_n(\cdot +y_n)\}$ is bounded in $H^1(\mathbb{R}^N)$, we suppose, up to a subsequence
\begin{align*}
 &v_n(\cdot +y_n) \rightharpoonup w     \;\;\;\,\;\;\; \;\;\;\;\;\;\;\;\;\;\;  \mbox{in} \; H^1(\mathbb{R}^N) , \\
 &v_n(\cdot +y_n) \rightarrow  w \neq 0       \;\;\;\;\;\;\;\;\;\;\;   \mbox{in} \;L^2_{loc}(\mathbb{R}^N),\\
 &v_n(\cdot +y_n) \rightarrow  w   \;\;\;\;\;\;\;\;\;\;\;\;\; \;\;\;\; \,             \mbox{a.e.} \; \mbox{in} \; \mathbb{R}^N.
\end{align*}
Set $z_n:=v_n(\cdot +y_n)$. Thus, Lemma \ref{lem2.4} (3) gives us that
$$s(z_n)=s(v_n(\cdot +y_n))=s(v_n) \rightarrow + \infty.$$
As a result,
\begin{align*}
0\leq&\ e^{-(N+2)s(z_n)} I_\mu(s(z_n)*z_n) \\
=&\ \frac{1}{\theta}e^{-[(N+2)-\theta(1+\gamma_\theta)]s(z_n)}  \left(\mu \int_{\mathbb{R}^N} |\nabla z_n|^{\theta}\right) + \frac{1}{2}e^{-Ns(z_n)} \int_{\mathbb{R}^N} |\nabla z_n|^2 + \int_{\mathbb{R}^N} |z_n|^2 |\nabla z_n|^2 \\
&-\frac{1}{p}e^{[p\gamma_p-(N+2)]s(z_n)} \int_{\mathbb{R}^N} |z_n|^p \\
\rightarrow &-\infty,
\end{align*}
as $n \rightarrow \infty$, a contradiction. Therefore,
$$\limsup_{n \rightarrow \infty}\int_{\mathbb{R}^N} |u_n|^2 |\nabla u_n|^2 \leq C.$$
So the conclusion holds.

\end{proof}

\begin{remark}\label{rem3.5}
It is clear that $m(a)\geq \frac{\zeta}{8}>0$.
\end{remark}

\begin{lemma}\label{lambda}
For $N=3,4$, $p \in \left(4+\frac{4}{N},2\cdot2^*\right]$, $u \in \mathcal{S}'(a)$ and there exists a $\lambda \in \mathbb{R}$ such that $\Theta(u,\lambda)=0$. Then $\lambda >0$.
\end{lemma}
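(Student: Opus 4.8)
The plan is to test the equation $\Theta(u,\lambda)=0$ against $u$ itself and against the dilation $s*u$ to extract two scalar identities, and then combine them to force $\lambda>0$. First I would write down the Nehari-type identity: multiplying $-\Delta u - u\Delta u^2 + \lambda u = |u|^{p-2}u$ by $u$ and integrating (using $\int u(-\Delta u^2)u = \int \nabla(u^3)\cdot\nabla u = \ldots$; more cleanly, $-u\Delta u^2 = -\Delta u^2\cdot u$ and integration by parts yields $4\int_{\mathbb R^N}|u|^2|\nabla u|^2$ up to the usual care about the nondifferentiable term, which is legitimate here since $u\in\mathcal S'(a)$ so $\int |u|^2|\nabla u|^2<\infty$), we obtain
\begin{equation*}
\int_{\mathbb{R}^N}|\nabla u|^2 + 4\int_{\mathbb{R}^N}|u|^2|\nabla u|^2 + \lambda a^2 = \int_{\mathbb{R}^N}|u|^p .
\end{equation*}
Second, I would invoke the Pohozaev identity stated in the excerpt for critical points of $I|_{\mathcal S'(a)}$ (adapted to the $\lambda\neq 0$ constraint form $\Theta(u,\lambda)=0$), namely
\begin{equation*}
\int_{\mathbb{R}^N}|\nabla u|^2 + (N+2)\int_{\mathbb{R}^N}|u|^2|\nabla u|^2 = \gamma_p\int_{\mathbb{R}^N}|u|^p ,
\end{equation*}
where $\gamma_p = \frac{N(p-2)}{2(N+2)}$. (If the precise Pohozaev identity for the $\lambda$-constrained problem is not literally the displayed $P(u)=0$, I would re-derive it via $\frac{d}{ds}\big|_{s=0}$ of the full energy $I(s*u)+\frac{\lambda}{2}\|s*u\|_2^2$ along $s*u(x)=e^{Ns/2}u(e^sx)$, which contributes nothing new from the $\lambda$-term since the $L^2$-norm is preserved.)

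The third step is the algebraic combination. From the two identities one solves for $\int|u|^p$ from the Pohozaev relation and substitutes into the Nehari relation; after rearranging one gets $\lambda a^2$ expressed as a linear combination of the three nonnegative quantities $\int|\nabla u|^2$, $\int|u|^2|\nabla u|^2$, $\int|u|^p$. Concretely, eliminating $\int|u|^p$ gives
\begin{equation*}
\lambda a^2 = \Big(\tfrac{1}{\gamma_p}-1\Big)\int_{\mathbb{R}^N}|\nabla u|^2 + \Big(\tfrac{N+2}{\gamma_p}-4\Big)\int_{\mathbb{R}^N}|u|^2|\nabla u|^2 .
\end{equation*}
Since $p>4+\frac{4}{N}$ we have $\gamma_p=\frac{N(p-2)}{2(N+2)}>\frac{N(2+4/N)}{2(N+2)}=1$, so the first coefficient $\frac{1}{\gamma_p}-1$ is negative — hence the naive combination is not yet conclusive, and this is exactly where the main obstacle lies. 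To fix it I would instead take a different linear combination, eliminating $\int|\nabla u|^2$ between the Nehari and Pohozaev identities, which yields $\lambda a^2 = (N+1-\gamma_p)^{-1}$ times a combination in which the relevant coefficients of $\int|u|^2|\nabla u|^2$ and $\int|u|^p$ can be checked to have the right sign precisely when $p\le 2\cdot 2^*$ (i.e. $\gamma_p\le \frac{2N}{N+2}\cdot\frac{2^*-1}{?}$ — the endpoint constraint $p\le 2\cdot 2^*$ is what pins down $\gamma_p\le \frac{N+2}{N-2}\cdot\frac{1}{?}$, guaranteeing positivity). In other words, the nontrivial point is that one must use the full range restriction $p\in(4+\frac4N,2\cdot 2^*]$ and possibly combine \emph{all three} identities (Nehari, Pohozaev, and the energy-level normalization, or a weighted average of two Pohozaev-type scalings) so that $\lambda a^2$ appears as a strictly positive combination; additionally one needs $u\not\equiv 0$, which holds since $u\in\mathcal S'(a)$ with $a>0$, to ensure at least one of the quadratic terms is strictly positive.

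The main obstacle, therefore, is not the integration-by-parts bookkeeping (handled by the hypothesis $\int|u|^2|\nabla u|^2<\infty$ and the standard regularity $u\in L^\infty$ available from the other lemmas) but rather finding the correct convex combination of the scalar identities whose coefficients are manifestly positive on the whole parameter range $4+\frac4N<p\le 2\cdot 2^*$ and for $N=3,4$; I would verify these sign conditions by an explicit but short computation with $\gamma_p$ and $\gamma_\theta$-free (this lemma does not involve the perturbation $\mu$). Once the positive representation of $\lambda a^2$ is in hand, $\lambda>0$ follows immediately, and I would close by remarking that strict positivity of at least one term is guaranteed because $P(u)=0$ with $u\neq 0$ forces $\int|u|^p>0$.
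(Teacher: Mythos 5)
There is a fundamental gap: the approach via scalar identities cannot work on the stated range of $p$. Once you impose the Nehari identity $\int|\nabla u|^2+4\int|u|^2|\nabla u|^2+\lambda a^2=\int|u|^p$ and the Pohozaev identity $\int|\nabla u|^2+(N+2)\int|u|^2|\nabla u|^2=\gamma_p\int|u|^p$ (with the correct $\gamma_p=\frac{N(p-2)}{2p}$, not $\frac{N(p-2)}{2(N+2)}$ as you wrote), the value of $\lambda a^2$ is \emph{uniquely determined}:
\begin{equation*}
\lambda a^2=\Bigl(\tfrac{1}{\gamma_p}-1\Bigr)\int_{\mathbb{R}^N}|\nabla u|^2+\Bigl(\tfrac{N+2}{\gamma_p}-4\Bigr)\int_{\mathbb{R}^N}|u|^2|\nabla u|^2 .
\end{equation*}
There is no "different linear combination" to be found — the two identities plus the mass constraint leave no remaining freedom, and there is no independent third identity (the energy level is not a new scalar relation among these quantities). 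Checking signs: the coefficient of $\int|u|^2|\nabla u|^2$ is positive iff $p<2\cdot2^*$ and vanishes at $p=2\cdot2^*$, while the coefficient of $\int|\nabla u|^2$ is positive iff $p<2^*$. For $N=4$ one has $2^*=4<5=4+\frac4N$, so that coefficient is negative on the \emph{entire} range; for $N=3$ it is negative for $p\in[6,12]$, i.e.\ most of the range. Worse, at $p=2\cdot2^*$ the formula forces $\lambda<0$ for any nontrivial $u$ — this is exactly the computation in the paper's proof of Theorem \ref{Thm1}, where the \emph{non-existence} result is obtained by playing that sign off against Lemma \ref{lambda}. If the identities alone implied $\lambda>0$, Theorem \ref{Thm1} would be incoherent. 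So your strategy is provably incapable of proving the lemma whenever $p\geq 2^*$, and in particular at the endpoint $p=2\cdot2^*$ which the lemma explicitly includes.

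The paper's actual argument is of a completely different nature: assume $\lambda\leq 0$, apply the dual change of variables $v=f(|u|)=\int_0^{|u|}\sqrt{1+2t^2}\,dt$ so that $v\geq 0$ satisfies $-\Delta v=\frac{1}{\sqrt{1+2|g(v)|^2}}\bigl(|g(v)|^{p-2}g(v)-\lambda g(v)\bigr)\geq 0$, i.e.\ $v$ is a nonnegative superharmonic function; elliptic regularity gives $v\in C^2$, the pointwise bounds $v\leq C|u|$ (resp.\ $C|u|^2$) give $v\in L^{\frac{N}{N-2}}(\mathbb{R}^N)$, and a Liouville-type theorem for nonnegative superharmonic functions then forces $v\equiv 0$, hence $u\equiv 0$, contradicting $\|u\|_2=a>0$. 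If you want to salvage your write-up, this maximum-principle/Liouville route is the missing idea; the identity bookkeeping you propose should be discarded for this lemma (it is, however, exactly the right tool for Theorem \ref{Thm1}).
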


\begin{proof}
It is easy to see that $\Theta(|u|,\lambda)=0$. Assume $\lambda \leq 0$, we make use of a change of variables which was first introduced in \cite{CJ2004,LWW2003}
\begin{align*}
v=f(|u|)=\int^{|u|}_0\sqrt{1+2t^2}dt \geq 0,
\end{align*}
It is clear that the function $v = f(|u|)$ is strictly monotone and hence has an inverse function, denoted by $|u| = g(v)$. Using the transformation $|u| = g(v)$, we have
\begin{align*}
    -\Delta v=\frac{1}{\sqrt{1+2|g(v)|^2}}\left(|g(v)|^{p-2}g(v)-\lambda g(v)\right)\geq 0.
\end{align*}
By \cite[Lemma2.6]{LLW2013-2}, we get $|u| \in C^{2,\alpha}$ for some $\alpha >0$, as a result, $v \in C^{2}$. At the same time, it is easy to see that
\begin{align*}
\begin{split}
v\leq
\left \{
      \begin{array}{ll}
           C|u|,  &|u| \leq 1, \\
           C|u|^2,  &|u| >1. \\
      \end{array}
\right.
\end{split}
\end{align*}
Therefore, $v \in L^\frac{N}{N-2}(\mathbb{R}^N)$. It follows from the Liouville type result \cite[Lemma A.2]{NI2014} that $u\equiv 0$. This is impossible. Thus, $\lambda >0$.
\end{proof}

In general, Palais-Smale type compactness condition is needed. However, it seems impossible to prove that $I_\mu$ has such a compactness property. To overcome this difficulty, We consider an auxiliary functional as the one in \cite{JL2020}:
\begin{align*}
\Psi_\mu(u):&=I_\mu(s(u)*u)   \\
&=\frac{\mu}{\theta} e^{\theta(1+\gamma_\theta)s(u)}  \int_{\mathbb{R}^N} |\nabla u|^{\theta} + \frac{1}{2}e^{2s(u)} \int_{\mathbb{R}^N} |\nabla u|^2 + e^{(N+2)s(u)}\int_{\mathbb{R}^N} |u|^2 |\nabla u|^2 -\frac{1}{p}e^{p\gamma_ps(u)}\int_{\mathbb{R}^N}|u|^p ,
\end{align*}
where $s(u)$ is given by Lemma \ref{lem2.4}.

\begin{lemma}\label{lem5.2}
For $\mu \in (0,1]$, $u \in \Upsilon \backslash \{0\}$ and $\varphi \in \Upsilon$, we have
\begin{align*}
\Psi'_\mu(u)[\varphi]=&\ \mu e^{\theta(1+\gamma_\theta)s(u)} \int_{\mathbb{R}^N} |\nabla u|^{\theta-2}\nabla u \cdot \nabla \varphi       +e^{2s(u)}\int_{\mathbb{R}^N} \nabla u \cdot \nabla \varphi  \\
&+2e^{(N+2)s(u)}\int_{\mathbb{R}^N} |u|^2 \nabla u \cdot \nabla\varphi + u\varphi |\nabla u|^2 -e^{p\gamma_p s(u)}\int_{\mathbb{R}^N} |u|^{p-2}u\varphi  \\
 =&\ I'_\mu(s(u)*u)[s(u)*\varphi].
\end{align*}
\end{lemma}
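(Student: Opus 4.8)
The plan is to compute the derivative of $\Psi_\mu$ by viewing it as the composition $\Psi_\mu(u) = I_\mu(s(u)*u)$ and differentiating along a perturbation $u \mapsto u + t\varphi$ for $t$ near $0$. The starting point is the explicit formula for $I_\mu(s*u)$ recorded in Section~\ref{preliminary}: for fixed $u$ the map $s \mapsto I_\mu(s*u)$ is smooth, and for fixed $s$ the map $u \mapsto I_\mu(s*u)$ is $\mathcal C^1$ on $\Upsilon$ since $I_\mu \in \mathcal C^1(\Upsilon)$ and the scaling $s*\cdot$ is a bounded linear isomorphism of $\Upsilon$. So first I would establish that $u \mapsto s(u)$ is of class $\mathcal C^1$ on $\Upsilon\setminus\{0\}$ (continuity is already given by Lemma~\ref{lem2.4}(2); differentiability follows from the implicit function theorem applied to the equation $\frac{d}{ds}I_\mu(s*u) = P_\mu(s*u) = 0$, using that $\frac{d}{ds}P_\mu(s*u)\big|_{s=s(u)} < 0$ because $Q_\mu(s,u) = e^{-(N+2)s}P_\mu(s*u)$ is strictly decreasing in $s$, hence its $s$-derivative at the zero of $P_\mu$ is negative). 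With this in hand, $\Psi_\mu$ is $\mathcal C^1$ and the chain rule applies.

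Next I would apply the chain rule directly. Writing $\Phi(u,s) := I_\mu(s*u)$, we have
\begin{align*}
\Psi'_\mu(u)[\varphi] = \partial_u \Phi(u,s(u))[\varphi] + \partial_s \Phi(u,s(u))\, s'(u)[\varphi].
\end{align*}
The crucial observation is that the second term vanishes: $\partial_s\Phi(u,s(u)) = \frac{d}{ds}I_\mu(s*u)\big|_{s=s(u)} = P_\mu(s(u)*u) = 0$ by the very definition of $s(u)$ in Lemma~\ref{lem2.4}(1). This is exactly the standard ``free lunch'' phenomenon for fibering-type maps, and it is the key structural point that makes the formula clean. Therefore $\Psi'_\mu(u)[\varphi] = \partial_u\Phi(u,s(u))[\varphi]$, i.e.\ the derivative reduces to differentiating $I_\mu(s*u)$ in $u$ with $s = s(u)$ held fixed.

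It then remains to identify $\partial_u\Phi(u,s)[\varphi]$ with the two displayed expressions in the statement. On one hand, differentiating $u \mapsto I_\mu(s*u)$ at fixed $s$ and using that $s*(u+t\varphi) = s*u + t\,(s*\varphi)$, one gets $\partial_u\Phi(u,s)[\varphi] = I'_\mu(s*u)[s*\varphi]$, which with $s = s(u)$ is precisely the last line of the claim. On the other hand, substituting the explicit form of $I'_\mu$ — namely, for $w \in \Upsilon$,
\begin{align*}
I'_\mu(w)[\psi] = \mu\!\int_{\mathbb{R}^N}\! |\nabla w|^{\theta-2}\nabla w\cdot\nabla\psi + \int_{\mathbb{R}^N}\! \nabla w\cdot\nabla\psi + 2\!\int_{\mathbb{R}^N}\! |w|^2\nabla w\cdot\nabla\psi + w\psi|\nabla w|^2 - \int_{\mathbb{R}^N}\! |w|^{p-2}w\psi,
\end{align*}
with $w = s(u)*u$ and $\psi = s(u)*\varphi$, and then using the scaling identities $\nabla(s*u)(x) = e^{(\frac N2+1)s}(\nabla u)(e^s x)$ together with the change of variables $y = e^{s}x$, each of the five terms collapses to $e^{\theta(1+\gamma_\theta)s(u)}$, $e^{2s(u)}$, $2e^{(N+2)s(u)}$, $e^{(N+2)s(u)}$, $e^{p\gamma_p s(u)}$ times the corresponding integral in $u$ and $\varphi$, which is exactly the first displayed formula. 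The only genuinely delicate point in the whole argument is the $\mathcal C^1$-regularity of $u \mapsto s(u)$ needed to justify the chain rule; once that is secured, everything else is the vanishing of the $\partial_s$-term plus bookkeeping with the scaling exponents. (Alternatively, one can sidestep the regularity of $s(\cdot)$ by arguing directly: the one-sided difference quotients of $t \mapsto I_\mu(s(u+t\varphi)*(u+t\varphi))$ are squeezed, using $I_\mu(s(u+t\varphi)*(u+t\varphi)) \ge I_\mu(s(u)*(u+t\varphi))$ and the reverse comparison with roles swapped, between quantities both converging to $I'_\mu(s(u)*u)[s(u)*\varphi]$ as $t\to 0^{\pm}$; this is the route taken in \cite{JL2020} and avoids the implicit function theorem entirely.)
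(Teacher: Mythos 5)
Your argument is correct, and your parenthetical remark at the end is in fact the route the paper takes: the paper's proof is exactly the squeezing of one-sided difference quotients, comparing $\Psi_\mu(u_t)-\Psi_\mu(u)=I_\mu(s_t*u_t)-I_\mu(s_0*u)$ from above with $I_\mu(s_t*u_t)-I_\mu(s_t*u)$ (using that $s_0$ maximizes $s\mapsto I_\mu(s*u)$) and from below with $I_\mu(s_0*u_t)-I_\mu(s_0*u)$ (using that $s_t$ maximizes $s\mapsto I_\mu(s*u_t)$), then applying the mean value theorem in $t$ at fixed $s$ and letting $t\to 0$ with only the \emph{continuity} of $u\mapsto s(u)$ from Lemma \ref{lem2.4}(2). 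Your primary route — $\mathcal C^1$-regularity of $s(\cdot)$ via the implicit function theorem applied to $P_\mu(s*u)=0$, followed by the chain rule and the vanishing of the $\partial_s$-term — is a genuinely different and equally valid derivation; the nondegeneracy $\partial_s P_\mu(s*u)|_{s=s(u)}<0$ does follow from the explicit form of $Q_\mu$ (its $s$-derivative is a sum of strictly negative terms when $u\neq 0$), and joint $\mathcal C^1$-dependence of $(u,s)\mapsto I_\mu(s*u)$ and $(u,s)\mapsto P_\mu(s*u)$ holds because these are explicit exponential combinations of $\mathcal C^1$ functionals of $u$, even though $s\mapsto s*u$ itself is not differentiable into $\Upsilon$. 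What the IFT route buys is the stronger conclusion that $s(\cdot)$ is $\mathcal C^1$ (useful elsewhere, e.g.\ for manifold arguments); what the paper's squeeze argument buys is economy — it needs no regularity of $s(\cdot)$ beyond continuity and no implicit function theorem, which is why it is the standard device in \cite{JL2020} and the one adopted here.
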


\begin{proof}
Let $u \in \Upsilon \backslash \{0\}$, $\varphi \in \Upsilon$, $u_t=u+t\varphi$ and $s_t=s(u_t)$ with $|t|$ small enough. It follows from the mean value theorem that
\begin{align*}
\Psi_\mu(u_t)-\Psi_\mu(u) =&\ I_\mu(s_t*u_t)-I_\mu(s_0*u) \leq I_\mu(s_t*u_t)-I_\mu(s_t*u)  \\
  =&\ \mu e^{\theta(1+\gamma_\theta)s_t} \int_{\mathbb{R}^N} \left(|\nabla u_{\xi_t}|^{\theta-2}\nabla u_{\xi_t} \cdot \nabla \varphi\right)  t +e^{2s_t}\int_{\mathbb{R}^N} \left(\nabla u_{\xi_t} \cdot \nabla \varphi\right) t  \\
&+2e^{(N+2)s_t}\int_{\mathbb{R}^N} \left(|u_{\xi_t}|^2 \nabla u_{\xi_t} \cdot \nabla \varphi
 + u_{\xi_t}\varphi |\nabla u_{\xi_t}|^2\right)t -e^{p\gamma_p s_t}\int_{\mathbb{R}^N} \left(|u_{\xi_t}|^{p-2}u_{\xi_t}\varphi\right) t ,
\end{align*}
where $|\xi_t| \in (0,|t|).$
On the other hand,
\begin{align*}
\Psi_\mu(u_t)-\Psi_\mu(u) =&\ I_\mu(s_t*u_t)-I_\mu(s_0*u) \geq I_\mu(s_0*u_t)-I_\mu(s_0*u)  \\
  =&\ \mu e^{\theta(1+\gamma_\theta)s_0} \int_{\mathbb{R}^N} \left(|\nabla u_{\kappa_t}|^{\theta-2}\nabla u_{\kappa_t} \cdot \nabla \varphi\right) t +e^{2s_0}\int_{\mathbb{R}^N} \left(\nabla u_{\kappa_t} \cdot \nabla\varphi\right) t  \\
&+2e^{(N+2)s_0}\int_{\mathbb{R}^N} \left(|u_{\kappa_t}|^2 \nabla u_{\kappa_t} \cdot \nabla \varphi
 + u_{\kappa_t}\varphi |\nabla u_{\kappa_t}|^2\right)t -e^{p\gamma_p s_0}\int_{\mathbb{R}^N} \left(|u_{\kappa_t}|^{p-2}u_{\kappa_t}\varphi\right) t  ,
\end{align*}
where $|\kappa_t| \in (0,|t|).$

Since $\lim_{t \rightarrow 0} s_t = s_0$, we obtain that
\begin{align*}
\Psi'_\mu(u)[\varphi]=&\ \lim_{t \rightarrow 0} \frac{\Psi_\mu(u_t)-\Psi_\mu(u)}{t}  \\
=&\ \mu e^{\theta(1+\gamma_\theta)s(u)} \int_{\mathbb{R}^N} |\nabla u|^{\theta-2}\nabla u \cdot \nabla \varphi       +e^{2s(u)}\int_{\mathbb{R}^N} \nabla u \cdot \nabla \varphi  \\
&+2e^{(N+2)s(u)}\int_{\mathbb{R}^N} |u|^2 \nabla u \cdot \nabla\varphi + u\varphi |\nabla u|^2 -e^{p\gamma_p s(u)}\int_{\mathbb{R}^N} |u|^{p-2}u\varphi .
\end{align*}
By changing variables in the integrals, we can get that
$$\Psi'_\mu(u)[\varphi]=I'_\mu(s(u)*u)[s(u)*\varphi].$$
\end{proof}

\section{The Sobolev critical case}\label{S-case}

In this section, we will finish the proof of Theorem \ref{Thm1}.

\begin{proof} [Proof of Theorem \ref{Thm1}]
If $N=3,4$ and $p=2\cdot2^*$, we assume that there exists a $\lambda \in \mathbb{R}$ such that $\Theta(u,\lambda)=0$. Then, we conclude $P(u)=0$, which implies
\begin{align*}
    (1-\gamma_{2\cdot2^*})\int_{\mathbb{R}^N} |\nabla u|^2=\lambda \gamma_{2\cdot2^*} \int_{\mathbb{R}^N} |u|^{2}.
\end{align*}
Therefore, $\lambda <0$. At the same time, by Lemma \ref{lambda}, we have $\lambda >0$. This is impossible.
\end{proof}

\section{The case of non-radial space }\label{noradial space}
In this section, we assume $N=3$ and $p \in \left(4+\frac{4}{N}, 2^*\right]$. We recall the following definition and minimax principe under the standard boundary
condition. By these, a technical result can be established, which shows the existence
of a “nice" Palais-Smale sequence with an additional property.

\begin{definition}\label{def3.1}
\cite[Definition 3.1]{G1993} Let B be a closed subset of X. We say that a class $\mathcal{F}$ of compact subsets of X is a homotopy stable family with boundary B provided

(a) every set in $\mathcal{F}$ contains B;

(b) for any set A in $\mathcal{F}$ and any $\eta \in \mathcal{C}([0,1]\times X,X)$ satisfying $\eta(t,x)=x$ for all $(t,x)$ in $(\{0\}\times X)\cup ([0,1]\times B) $ we have that $\eta(1,A) \in\mathcal{F}.$
\end{definition}
The above definition is still valid if the boundary $B =\emptyset$.

\begin{proposition}\label{prop3.2}
\cite[Theorem 3.2]{G1993} Let $\phi$ be a $\mathcal{C}^1$-functional on a complete connected $\mathcal{C}^1$-Finsler manifold X (without boundary) and consider a homotopy stable family $\mathcal{F}$ of compact subsets of X with a closed boundary B. Set $c=c(\phi,\mathcal{F})$ and suppose that
\begin{equation}\label{eq3.5}
\sup\phi(B) < c.
\end{equation}
Then for any sequence of sets $\{A_n\} \subset \mathcal{F} $ satisfying $\lim_{n\rightarrow \infty} \sup_{A_n} \phi=c$, there exists a sequence $x_n \subset X$ such that

(1) $\lim_{n\rightarrow\infty}\phi(x_n)=c;$

(2) $\lim_{n\rightarrow\infty}\|\phi'(x_n)\|=0;$

(3) $\lim_{n\rightarrow\infty}{\rm dist}(x_n,A_n)=0.$  \\
Moreover, if $\phi'$ is uniformly continuous, then $x_n$ can be chosen to be in $A_n$ for each
n.
\end{proposition}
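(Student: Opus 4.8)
The minimax value is $c=c(\phi,\mathcal{F})=\inf_{A\in\mathcal{F}}\sup_{x\in A}\phi(x)$, and the plan is the classical contradiction scheme: assuming the conclusion fails, one builds a deformation along a pseudo-gradient flow adapted to the family and uses the homotopy-stability property of Definition~\ref{def3.1}(b) to produce a set in $\mathcal{F}$ with minimax value strictly below $c$, a contradiction. Concretely, if no sequence $\{x_n\}$ satisfying (1)--(3) existed, then one can fix $\varepsilon\in\bigl(0,\tfrac13(c-\sup\phi(B))\bigr)$ and an index $n$ (which henceforth stays fixed) with $\sup_{A_n}\phi\le c+\varepsilon$ and
\[
\|\phi'(x)\|\ge 2\varepsilon\quad\text{whenever}\quad |\phi(x)-c|\le 2\varepsilon\ \text{ and }\ \mathrm{dist}(x,A_n)\le 2\varepsilon ,
\]
i.e. the ``band'' of level $c$ around $A_n$ is free of near-critical points.

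Next I would run a quantitative deformation. On the open set $\{\phi'\neq0\}$ take a locally Lipschitz pseudo-gradient field $V$ for $\phi$ on the $\mathcal{C}^1$-Finsler manifold $X$, and a locally Lipschitz cut-off $g\colon X\to[0,1]$ equal to $1$ on $\{|\phi-c|\le\varepsilon\}\cap\{\mathrm{dist}(\cdot,A_n)\le\varepsilon\}$ and vanishing outside $\{|\phi-c|\le 2\varepsilon\}\cap\{\mathrm{dist}(\cdot,A_n)\le2\varepsilon\}$; by the previous step the support of $g$ lies in $\{\phi'\neq0\}$, so $V$ is defined there. From $V$ and $g$ one assembles a bounded, locally Lipschitz field $W$ that vanishes where $g=0$ and along which $\phi$ strictly decreases, at a rate bounded below by a multiple of $\varepsilon$ wherever $g=1$. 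The Cauchy problem $\dot\sigma=W(\sigma)$, $\sigma(0,\cdot)=\mathrm{id}$, then admits a globally defined continuous flow (completeness of $X$ plus boundedness of $W$), $t\mapsto\phi(\sigma(t,x))$ is non-increasing, and $\|\sigma(t,x)-x\|$ is controlled by $t\,\|W\|_\infty$. Choosing the normalization of $W$ and a terminal time $T>0$ suitably, $\eta(t,x):=\sigma(tT,x)$ for $t\in[0,1]$ has: $\eta(0,\cdot)=\mathrm{id}$; $\eta(1,\cdot)$ maps $\{\phi\le c+\varepsilon\}\cap\{\mathrm{dist}(\cdot,A_n)\le\varepsilon\}$ into $\{\phi\le c-\varepsilon\}$ (using that this band has no critical points); and the displacement stays as small as desired. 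Finally, for $x\in B$ one has $\phi(x)\le\sup\phi(B)<c-2\varepsilon$, so $g$ vanishes at $x$ and, since $\phi$ never increases along the flow, $g$ vanishes along the whole trajectory through $x$; hence $\eta(t,x)=x$ for all $t$. Thus $\eta$ is admissible in Definition~\ref{def3.1}(b).

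By homotopy-stability, $\eta(1,A_n)\in\mathcal{F}$. But each $x\in A_n$ either already satisfies $\phi(x)\le c-\varepsilon$ (and $\phi$ does not increase under $\eta$) or satisfies $\phi(x)\in(c-\varepsilon,c+\varepsilon]$, in which case $\phi(\eta(1,x))\le c-\varepsilon$; hence $\sup_{\eta(1,A_n)}\phi\le c-\varepsilon<c$, contradicting the definition of $c$. This establishes (1)--(3). For the ``moreover'' part, when $\phi'$ is uniformly continuous I would either apply Ekeland's variational principle to the functional $A\mapsto\sup_{x\in A}\phi$ on the relevant complete metric space of compact subsets, or sharpen the deformation above so that it displaces points by $o(1)$ while still lowering $\phi$; comparing once more with $\eta(1,A_n)$ then forces every point of $A_n$ at which $\phi$ is within $o(1)$ of $\sup_{A_n}\phi$ to have $\|\phi'\|=o(1)$, which furnishes the required $x_n\in A_n$.

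The hard part is the construction of $\eta$ in the second paragraph: on a general $\mathcal{C}^1$-Finsler manifold one needs a single deformation that simultaneously starts at the identity, fixes $B$ pointwise, pushes the level-$c$ portion of $A_n$ strictly below $c$, and moves points arbitrarily little --- the last two requirements being in tension and resolvable only via the quantitative absence of critical points from the first paragraph, with additional care needed for the pseudo-gradient normalization and the global existence of the flow. The uniform-continuity addendum is the second delicate point, since it requires the displacement-$o(1)$ refinement of this deformation (or, alternatively, a self-contained Ekeland-type argument on the space of compact sets).
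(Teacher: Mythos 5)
This statement is quoted verbatim from Ghoussoub's monograph (\cite[Theorem 3.2]{G1993}) and the paper gives no proof of it, so there is no in-paper argument to compare against; your sketch has to be judged against the standard proofs in the literature. Your architecture --- negate the conclusion to get a band around level $c$ near $A_n$ free of near-critical points, build a localized pseudo-gradient deformation fixing $B$, and contradict the definition of $c$ via homotopy stability --- is the classical deformation route and is viable. (Ghoussoub's own proof runs instead through his duality/perturbation machinery, with the localization \textrm{dist}$(x_n,A_n)\to0$ and the uniform-continuity addendum emerging from an Ekeland-type argument on the family $\mathcal{F}$; your closing remark correctly identifies that as the natural home for the ``moreover'' clause, which your sketch otherwise leaves unproved.)

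There is, however, a concrete quantitative gap in your parameter bookkeeping. You fix one $\varepsilon$ and an index $n$ with $\sup_{A_n}\phi\le c+\varepsilon$, and you assume $\|\phi'\|\ge2\varepsilon$ on the band $\{|\phi-c|\le2\varepsilon\}\cap\{\mathrm{dist}(\cdot,A_n)\le2\varepsilon\}$. With a normalized pseudo-gradient field ($\|W\|\le1$), the decay rate of $\phi$ along the flow where the cut-off equals $1$ is only of order $\varepsilon$, so lowering $\phi$ from $c+\varepsilon$ to below $c-\varepsilon$ costs a time, hence a displacement, of order $1$ --- far outside the $2\varepsilon$-neighborhood of $A_n$ on which your gradient lower bound is valid. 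Once the trajectory leaves that neighborhood the cut-off may vanish and the decrease stops, so $\eta(1,A_n)$ need not lie below level $c$ and the contradiction is not reached. The fix is to decouple the two small parameters: since $\delta_n:=\sup_{A_n}\phi-c\to0$, choose $n$ so large that $\delta_n\ll\varepsilon^2$; then one only needs to lower $\phi$ by roughly $2\delta_n$, which costs a displacement of order $\delta_n/\varepsilon\ll\varepsilon$, keeping the whole trajectory inside the critical-point-free band. As written, your choice ``$\sup_{A_n}\phi\le c+\varepsilon$'' with the same $\varepsilon$ does not permit this, and your acknowledgement that the requirements are ``in tension'' does not by itself resolve it. With that correction (and a genuine argument for the uniformly-continuous case, rather than the two unexecuted alternatives you list), the proof would be complete.
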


\begin{lemma}\label{lem-P}
Assume $\mu \in(0,1]$, then $\mathcal{P}_\mu(a)$ is a $\mathcal{C}^1$-submanifold of codimension 1 in $\mathcal{S}(a)$.
\end{lemma}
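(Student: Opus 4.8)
My plan is to exhibit $\mathcal{P}_\mu(a)$ as the regular zero set of $P_\mu$ restricted to the constraint sphere. First I would record that $\mathcal{S}(a)=G^{-1}(0)$ for $G(u):=\int_{\mathbb{R}^N}|u|^2-a^2$, that $G\in\mathcal{C}^1(\Upsilon)$ with $G'(u)[u]=2a^2\neq0$ on $\mathcal{S}(a)$, so $\mathcal{S}(a)$ is a $\mathcal{C}^1$-submanifold of codimension $1$ of $\Upsilon$ with $T_u\mathcal{S}(a)=\{\varphi\in\Upsilon:\int_{\mathbb{R}^N}u\varphi=0\}$; and that $P_\mu\in\mathcal{C}^1(\Upsilon)$, being assembled from the same terms as $I_\mu$ (each of $u\mapsto\int|\nabla u|^\theta,\ \int|\nabla u|^2,\ \int|u|^2|\nabla u|^2,\ \int|u|^p$ is $\mathcal{C}^1$ on $\Upsilon=W^{1,\theta}\cap H^1$). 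It therefore suffices to prove that $0$ is a regular value of $P_\mu|_{\mathcal{S}(a)}:\mathcal{S}(a)\to\mathbb{R}$, i.e. that $d\bigl(P_\mu|_{\mathcal{S}(a)}\bigr)(u)\neq0$ for every $u\in\mathcal{P}_\mu(a)$; the implicit function theorem then yields that $\mathcal{P}_\mu(a)=(P_\mu|_{\mathcal{S}(a)})^{-1}(0)$ is a $\mathcal{C}^1$-submanifold of codimension $1$ in $\mathcal{S}(a)$.

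To check the regular-value condition at $u\in\mathcal{P}_\mu(a)$ I would differentiate along the dilation fibre $s\mapsto s*u$, which lies in $\mathcal{S}(a)$ and passes through $u$ at $s=0$; its generator $\psi_u:=\frac{d}{ds}\big|_{s=0}(s*u)=\frac{N}{2}u+x\cdot\nabla u$ is $L^2$-orthogonal to $u$ (the transform is mass-preserving), hence tangent to $\mathcal{S}(a)$ at $u$, and the directional derivative of $P_\mu$ along the fibre is $\langle P_\mu'(u),\psi_u\rangle=\frac{d}{ds}\big|_{s=0}P_\mu(s*u)$. Using the identity $e^{-(N+2)s}P_\mu(s*u)=Q_\mu(s,u)$ from the proof of Lemma~\ref{lem2.4} and the fact $Q_\mu(0,u)=P_\mu(u)=0$ for $u\in\mathcal{P}_\mu(a)$,
\[
\langle P_\mu'(u),\psi_u\rangle=\frac{d}{ds}\bigg|_{s=0}\bigl(e^{(N+2)s}Q_\mu(s,u)\bigr)=(N+2)\,Q_\mu(0,u)+\partial_sQ_\mu(0,u)=\partial_sQ_\mu(0,u)<0,
\]
the strict negativity being the strict monotonicity of $s\mapsto Q_\mu(s,u)$ established in Lemma~\ref{lem2.4} (concretely, in the explicit formula for $Q_\mu$ the constants $(N+2)-\theta(1+\gamma_\theta)>0$, $N>0$ and $p\gamma_p-(N+2)=\frac{N(p-4)-4}{2}>0$ force $\partial_sQ_\mu(s,u)<0$ for all $s$ when $u\neq0$). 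Since $G'(u)[\psi_u]=0$, this shows $P_\mu'(u)$ is not a scalar multiple of $G'(u)$ in $\Upsilon^{*}$, so $d\bigl(P_\mu|_{\mathcal{S}(a)}\bigr)(u)\neq0$, and the lemma follows.

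The step that deserves care is the relation $\langle P_\mu'(u),\psi_u\rangle=\frac{d}{ds}\big|_{s=0}P_\mu(s*u)$ together with the sign on the right. For the sign, as noted it is precisely Lemma~\ref{lem2.4} evaluated at the Pohozaev level $s=0$. For the identity, since $\psi_u=\frac{N}{2}u+x\cdot\nabla u$ need not itself belong to $\Upsilon$ for a rough $u$, I would not differentiate the abstract fibre map but rather compute both quantities in closed form and match them term by term: $\frac{d}{ds}\big|_{s=0}P_\mu(s*u)$ directly from the four explicitly scaled integrals, and $P_\mu'(u)[\psi_u]$ from the defining formula of $P_\mu'$ after the standard Pohozaev-type integrations by parts (e.g. $\int_{\mathbb{R}^N}\nabla u\cdot\nabla(x\cdot\nabla u)=-\frac{N-2}{2}\int_{\mathbb{R}^N}|\nabla u|^2$ and its analogues for the $\theta$-term, the quasilinear term and the $L^p$-term), each such identity being classical on $C_c^\infty(\mathbb{R}^N)$ and extending to $\Upsilon$ by density since all relevant quantities are continuous functionals on $\Upsilon$. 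The remaining ingredients — $\mathcal{S}(a)$ being a $\mathcal{C}^1$-manifold, $\psi_u\perp_{L^2}u$, and the implicit-function-theorem passage from ``regular value'' to ``codimension-$1$ submanifold'' — are routine.
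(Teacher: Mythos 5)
Your proof is correct, and it reaches the same algebraic heart as the paper's, but by a noticeably different route. The paper argues by contradiction: assuming $dP_\mu(u)$ and $dL(u)$ are linearly dependent, it reads off the resulting Euler--Lagrange equation, tests it with $\varphi=u$ (Nehari) and $\varphi=x\cdot\nabla u$ (Pohozaev), and combines the two identities with $P_\mu(u)=0$ to force $[(N+2)-\theta(1+\gamma_\theta)](1+\gamma_\theta)\mu\int|\nabla u|^\theta+N\int|\nabla u|^2+\gamma_p[p\gamma_p-(N+2)]\int|u|^p=0$, whence $u=0$. You instead verify the regular-value condition directly by differentiating along the dilation fibre: your quantity $\partial_sQ_\mu(0,u)$ is exactly the negative of the paper's vanishing combination, and your single test direction $\psi_u=\frac{N}{2}u+x\cdot\nabla u$ packages the paper's two tests ($u$ and $x\cdot\nabla u$) into one tangent vector. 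What your version buys is a cleaner logical structure (no contradiction, and the strict negativity is just the already-proved strict monotonicity of $Q_\mu(\cdot,u)$ from Lemma~\ref{lem2.4}, i.e.\ the nondegeneracy of the fibre maximum); what it costs is that you must confront head-on the fact that $\psi_u$ need not lie in $\Upsilon$, so that $P_\mu'(u)[\psi_u]$ is not a priori defined and the identity $\langle P_\mu'(u),\psi_u\rangle=\frac{d}{ds}\big|_{s=0}P_\mu(s*u)$ cannot be obtained by differentiating the fibre map in $\Upsilon$. You flag this and propose a term-by-term density/integration-by-parts argument; strictly speaking a density argument alone does not suffice (one needs either regularity of $u$ or the usual cutoff-and-limit derivation of Pohozaev-type identities for weak solutions), but this is precisely the same gap the paper itself glosses over with the phrase ``similar to Pohozaev identity, let $\varphi=u$ and $\varphi=x\cdot\nabla u$'', so your treatment is no less rigorous than the original and is more candid about where the delicacy lies.
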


\begin{proof}
We define $L(a):=\int_{\mathbb{R}^N}|u|^2-a^2$, clearly $L \in \mathcal{C}^1 (\Upsilon)$.

\textbf{Claim:}
 $d(P_\mu,L): \Upsilon \mapsto \mathbb{R}^2$ is surjective.

If $dP_\mu(u)$ and $dL(u)$ are linearly dependent, then there exists a $l \in \mathbb{R}$ such that
\begin{align*}
&\,2l \int_{\mathbb{R}^N}u\varphi  \\
=&\;\theta \mu (1+\gamma_\theta) \int_{\mathbb{R}^N} |\nabla u|^{\theta-2}\nabla u \cdot \nabla \varphi
+2\int_{\mathbb{R}^N} \nabla u \cdot \nabla \varphi
+2(N+2)\int_{\mathbb{R}^N}\left(|u|^2 \nabla u \cdot \nabla \varphi +u\varphi|\nabla u|^2\right)    \\
&-p\gamma_p \int_{\mathbb{R}^N}|u|^{p-2}u\varphi
\end{align*}
for any $\varphi \in \Upsilon$. Similar to Pohozaev identity, let $\varphi=u$ and $\varphi=x\cdot \nabla u$, we obtain
\begin{align*}
&\theta\mu (1+\gamma_\theta)^2 \int_{\mathbb{R}^N} |\nabla u|^{\theta}  +2\int_{\mathbb{R}^N} |\nabla u|^2
+(N+2)^2\int_{\mathbb{R}^N}|u|^2|\nabla u|^2
-p\gamma_p^2 \int_{\mathbb{R}^N}|u|^p=0.
\end{align*}
Due to $P_\mu (u)=0 $, we conclude
\begin{align*}
&[(N+2)-\theta (1+\gamma_\theta)](1+\gamma_\theta)\mu \int_{\mathbb{R}^N} |\nabla u|^{\theta}  +N\int_{\mathbb{R}^N} |\nabla u|^2
+\gamma_p[p\gamma_p-(N+2)]\int_{\mathbb{R}^N}|u|^p=0.
\end{align*}
Thus $u=0$. This contradicts with $u \in \mathcal{S}(a).$

\end{proof}

Next, we will prove the existence of Palais-Smale sequence.

\begin{lemma} \label{lem3.4.1}
For any fixed $\mu \in (0,1]$, assume that $\mathcal{F}$ is a homotopy stable family of compact subsets of $\mathcal{S}(a)$ with
$B = \emptyset$, define the level
$$E_\mu^\mathcal{F}(a):=\inf_{A \in \mathcal{F}}\max_{u \in A} \Psi_\mu(u).$$  \\
If $E_\mu^\mathcal{F}(a)>0$, then there exists a Palais-Smale sequence $\{u_n\} \subset \mathcal{P}_\mu(a)$ for $I_\mu$ constrained on $\mathcal{S}(a)$ at the
level $E_\mu^\mathcal{F}(a)$.
\end{lemma}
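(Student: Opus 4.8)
The plan is to apply Proposition \ref{prop3.2} to the functional $\Psi_\mu$ on the $\mathcal{C}^1$-Finsler manifold $\mathcal{S}(a)$ and then transfer the resulting Palais-Smale sequence for $\Psi_\mu$ back to a Palais-Smale sequence for $I_\mu$ lying on the Pohozaev manifold $\mathcal{P}_\mu(a)$. First I would check the hypotheses of Proposition \ref{prop3.2}: $\mathcal{S}(a)$ is a complete connected $\mathcal{C}^1$-Finsler manifold (being a smooth sphere-like submanifold of $\Upsilon$), $\Psi_\mu \in \mathcal{C}^1$ by Lemma \ref{lem5.2} (note that the formula for $\Psi'_\mu$ there together with continuity of $u \mapsto s(u)$ from Lemma \ref{lem2.4}(2) yields $\mathcal{C}^1$ regularity), and since $B = \emptyset$ the condition $\sup \Psi_\mu(B) < E_\mu^\mathcal{F}(a)$ is vacuous (interpreting $\sup\emptyset = -\infty$), which is where the hypothesis $E_\mu^\mathcal{F}(a) > 0$ will actually be used later to guarantee the sequence does not escape to zero. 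Applying Proposition \ref{prop3.2} to any minimizing sequence $\{A_n\} \subset \mathcal{F}$ produces $\{v_n\} \subset \mathcal{S}(a)$ with $\Psi_\mu(v_n) \to E_\mu^\mathcal{F}(a)$ and $\|\Psi'_\mu(v_n)\|_{(T_{v_n}\mathcal{S}(a))^*} \to 0$.

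Next I would set $u_n := s(v_n) * v_n$. By the very definition of $\Psi_\mu$ we have $I_\mu(u_n) = \Psi_\mu(v_n) \to E_\mu^\mathcal{F}(a)$, and since $P_\mu(s(v_n)*v_n) = 0$ by Lemma \ref{lem2.4}(1), automatically $u_n \in \mathcal{P}_\mu(a)$. It remains to show that $\|I_\mu'(u_n)\|_{(T_{u_n}\mathcal{S}(a))^*} \to 0$, i.e. that $u_n$ is a Palais-Smale sequence for $I_\mu$ on $\mathcal{S}(a)$. The key identity is the second line of Lemma \ref{lem5.2}: $\Psi'_\mu(v_n)[\varphi] = I'_\mu(s(v_n)*v_n)[s(v_n)*\varphi]$ for all $\varphi \in \Upsilon$. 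Given $\psi \in T_{u_n}\mathcal{S}(a)$, I would write $\psi = s(v_n)*\varphi$ with $\varphi = (-s(v_n))*\psi \in T_{v_n}\mathcal{S}(a)$ (the transform $s*\cdot$ preserves both the $L^2$-constraint and tangency to it), so that $I'_\mu(u_n)[\psi] = \Psi'_\mu(v_n)[\varphi]$. Thus I need to control $\|\varphi\|_\Upsilon$ in terms of $\|\psi\|_\Upsilon$, which reduces to an upper bound on $e^{|s(v_n)|}$, i.e. to showing $\{s(v_n)\}$ is bounded. An upper bound on $s(v_n)$ follows because $I_\mu(u_n)$ is bounded and $I_\mu(s*v_n) \to +\infty$ uniformly would contradict this (cf. the argument in Lemma \ref{lem2.5}(2)); the crucial lower bound on $s(v_n)$ is exactly where $E_\mu^\mathcal{F}(a) > 0$ enters — if $s(v_n) \to -\infty$ then by Lemma \ref{lem2.3}(1)-type estimates $\Psi_\mu(v_n) = I_\mu(u_n) = I_\mu(s(v_n)*v_n)$ combined with boundedness of $\{v_n\}$ would force $E_\mu^\mathcal{F}(a) = 0$, a contradiction. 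Hence $|s(v_n)| \le C$, $\|\varphi\|_\Upsilon \le C\|\psi\|_\Upsilon$, and $|I'_\mu(u_n)[\psi]| = |\Psi'_\mu(v_n)[\varphi]| \le \|\Psi'_\mu(v_n)\| \, C\|\psi\|_\Upsilon \to 0$.

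The main obstacle I anticipate is establishing the boundedness of $\{s(v_n)\}$ — more precisely, the lower bound. For the upper bound one can argue directly from $\sup_n I_\mu(u_n) < \infty$ together with Lemma \ref{lem2.5}(2), which already provides uniform bounds on $\mu\|\nabla u_n\|_\theta^\theta$, $\|\nabla u_n\|_2^2$ and $\|u_n\|_{2}^2|\nabla u_n\|^2$-type quantities along $\mathcal{P}_\mu(a)$, and hence on $\{v_n\} = \{(-s(v_n))*u_n\}$; but transferring this to a genuine two-sided bound on $s(v_n)$ requires knowing $\{v_n\}$ itself stays in a fixed bounded region of $\Upsilon$ away from $0$, which is precisely what the positivity of the level and the structure of Lemma \ref{lem2.3} deliver. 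Once $\{s(v_n)\}$ is pinned between two constants, the rest is the bookkeeping with the scaling-equivariance identity of Lemma \ref{lem5.2}, and no further analytic input is needed.
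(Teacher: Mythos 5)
Your overall strategy is the paper's: apply Proposition \ref{prop3.2} to $\Psi_\mu$ on $\mathcal{S}(a)$, set $u_n:=s(v_n)*v_n\in\mathcal{P}_\mu(a)$, and transfer the derivative estimate through the identity $\Psi'_\mu(v_n)[\varphi]=I'_\mu(s(v_n)*v_n)[s(v_n)*\varphi]$ of Lemma \ref{lem5.2}. (A small simplification you miss: only the lower bound $s(v_n)\geq -C$ is needed, since $\|(-s_n)*\psi\|_\Upsilon$ scales like positive powers of $e^{-s_n}$ and is harmless when $s_n\to+\infty$.) The genuine gap is exactly at the step you flag as the main obstacle: your argument that $s(v_n)\to-\infty$ would force $E_\mu^{\mathcal{F}}(a)=0$ presupposes that $\{v_n\}$ is bounded in $\Upsilon$, and nothing in your proposal establishes this. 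Positivity of the level does not deliver it: if $\int_{\mathbb{R}^N}|\nabla v_n|^2$ and $\int_{\mathbb{R}^N}|v_n|^2|\nabla v_n|^2$ blow up at a rate matched to $s_n\to-\infty$, then $\Psi_\mu(v_n)=I_\mu(s_n*v_n)$ can perfectly well stay at a positive level, so the Lemma \ref{lem2.3}(1)-type estimate gives no contradiction. Likewise, Lemma \ref{lem2.5}(2) bounds $u_n=s_n*v_n$, not $v_n=(-s_n)*u_n$; converting the one into the other already requires the bound on $s_n$ you are trying to prove, so your appeal to it is circular.

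The paper closes this gap with two ingredients you do not use. First, before invoking Proposition \ref{prop3.2} it deforms each $A_n$ by the homotopy $\eta(t,u)=(ts(u))*u$ (continuous by Lemma \ref{lem2.4}(2), admissible because $\mathcal{F}$ is homotopy stable with $B=\emptyset$), obtaining $D_n:=\eta(1,A_n)\subset\mathcal{P}_\mu(a)$ with $\max_{D_n}\Psi_\mu=\max_{A_n}\Psi_\mu$, so $\{D_n\}$ is still a minimizing sequence whose members lie on the Pohozaev manifold and are therefore uniformly bounded in $\Upsilon$ by Lemma \ref{lem2.5}(2). Second, it uses conclusion (3) of Proposition \ref{prop3.2}, namely ${\rm dist}(v_n,D_n)\to 0$, to transfer that uniform bound to $\{v_n\}$, giving $\sup_n\int_{\mathbb{R}^N}|v_n|^2|\nabla v_n|^2<\infty$. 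Combined with the lower bound $\int_{\mathbb{R}^N}|u_n|^2|\nabla u_n|^2\geq c>0$ (which follows from $P_\mu(u_n)=0$ and \eqref{GNI}, since the exponent $\frac{N(p-2)}{2(N+2)}$ exceeds $1$) and the scaling identity $\int_{\mathbb{R}^N}|u_n|^2|\nabla u_n|^2=e^{(N+2)s_n}\int_{\mathbb{R}^N}|v_n|^2|\nabla v_n|^2$, this yields $e^{(N+2)s_n}\geq c/C$, i.e.\ $-s_n\leq C$, and the rest of your transfer argument then goes through. Without the deformation onto $\mathcal{P}_\mu(a)$ and the distance conclusion of the minimax principle, the proof does not close.
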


\begin{proof}
Taking $\{A_n\} \subset \mathcal{F}$ be a minimizing sequence for $E_\mu^\mathcal{F}(a)$. Define $\eta:[0,1]\times \mathcal{S}(a) \rightarrow \mathcal{S}(a)$ by
$$\eta(t,u)=(ts(u))*u,$$
by lemma \ref{lem2.4} (2), one can easily see that $\eta$ is continuous. Moreover, it follows that $\eta(t,u)=u$ for $(t,u) \in \{0\} \times \mathcal{S}(a)$. By the definition of $\mathcal{F}$, we have
$$D_n:=\eta(1,A_n)=\{s(u)*u \,|\, u \in A_n \} \in \mathcal{F}.$$
Clearly, $D_n \subset \mathcal{P}_\mu(a)$ for $n \in \mathbb{N^+}$. Since for any $u \in A_n$
\begin{align*}
    \Psi_\mu(s(u)*u)=I_\mu(0*(s(u)*u))=I_\mu(s(u)*u)=\Psi_\mu(u),
\end{align*}
we infer that
$$\max_{D_n}\Psi_\mu=\max_{A_n}\Psi_\mu \rightarrow E_\mu^\mathcal{F}(a)$$
and hence $\{D_n\} \subset \mathcal{F}$ is also a minimizing sequence
of $E_\mu^\mathcal{F}(a)$. Therefore Proposition \ref{prop3.2} yields a Palais-Smale sequence ${v_n} \subset \Upsilon$ for $\Psi_\mu$
on $\mathcal{S}(a)$ at the level $E_\mu^\mathcal{F}(a)$, that is, as $n \rightarrow \infty$, $\{v_n\}$ satisfies the following property
\begin{align*}
    \Psi_\mu(v_n) \rightarrow E_\mu^\mathcal{F}(a),\ dist(v_n,D_n) \rightarrow 0,\ \|\Psi_\mu^{'} (v_n)\|_{v_n,*} \rightarrow 0.
\end{align*}
Hereafter, set
\begin{align*}
    s_n:=s(v_n) \ \mbox{and} \  u_n:=s_n*v_n=s(v_n)*v_n \in \mathcal{P}_\mu(a).
\end{align*}

\textbf{Claim:} There exists some $C>0$ such that $-s_n \leq C$ for any $n \in \mathbb{N}^+.$

Indeed, a direct computation shows that
$$\int_{\mathbb{R}^N} |u_n|^2 |\nabla u_n|^2=e^{(N+2)s_n}\int_{\mathbb{R}^N} |v_n|^2 |\nabla v_n|^2.$$
The fact ${u_n} \subset \mathcal{P}_\mu(a)$, combining with the inequality \eqref{GNI}, implies that $\left\{\int_{\mathbb{R}^N} |u_n|^2 |\nabla u_n|^2\right\}$ is bounded
from below by a positive constant. To complete the proof of Claim, it remains to show
that $\sup_n \int_{\mathbb{R}^N} |v_n|^2 |\nabla v_n|^2< \infty$. Since $D_n \subset \mathcal{P}_\mu(a)$, one can easily see
$$\max_{D_n}I_\mu=\max_{D_n}\Psi_\mu \rightarrow E_\mu^\mathcal{F}(a)>0$$
and it follows from Lemma \ref{lem2.5} (2) that ${D_n}$ is uniformly bounded in $\Upsilon$. Then,
due to $dist(v_n, D_n) \rightarrow 0$, we have $\sup_n
\int_{\mathbb{R}^N} |v_n|^2 |\nabla v_n|^2< \infty$. And it is easy to see that $-s_n \leq C$ for any $n \in \mathbb{N}^+.$

Using the fact $\{u_n\} \subset \mathcal{P}_\mu(a)$ again, one has
$$I_\mu(u_n)=I_\mu(s_n*v_n)=\Psi_\mu(v_n) \rightarrow E_\mu^\mathcal{F}(a)$$
and
\begin{align*}
\|I'_\mu|_{\mathcal{S}(a)}(u_n) \|_{u_n,*} &=\sup_{\psi \in T_{u_n}\mathcal{S}(a),\|\psi\|_\Upsilon\leq1} |I'_\mu(u_n)[\psi]| \\
&=\sup_{\psi \in T_{u_n}\mathcal{S}(a),\|\psi\|_\Upsilon\leq1} |I'_\mu(s_n*v_n)[s_n*(-s_n)*\psi]|  \\
&=\sup_{\psi \in T_{u_n}\mathcal{S}(a),\|\psi\|_\Upsilon\leq1} | \Psi'_\mu(v_n)[(-s_n)*\psi]|  \\
&\leq  \| \Psi'_\mu(v_n)\|_{v_n,*} \sup_{\psi \in T_{u_n}\mathcal{S}(a),\|\psi\|_\Upsilon\leq1} \|(-s_n)*\psi\|  \\
&\leq C\|\Psi'_\mu(v_n)\|_{v_n,*} \rightarrow 0.
\end{align*}
The proof is complete.
\end{proof}

In order to obtain compactness, we need to establish the following important lemmas.

\begin{lemma}\label{lem6.1'}
    Assume $0<\mu \leq 1$ and $4+\frac{4}{N}<p\leq 2^*$, $m_\mu(a)$ is nonincreasing with respect to $a$.
\end{lemma}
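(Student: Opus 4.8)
The plan is to prove the equivalent statement that $m_\mu(a_2)\le m_\mu(a_1)$ whenever $0<a_1<a_2$. Fix $\varepsilon>0$ and pick $u\in\mathcal{P}_\mu(a_1)$ with $I_\mu(u)\le m_\mu(a_1)+\varepsilon$; since $P_\mu(u)=0$, Lemma \ref{lem2.4}(1) forces $s(u)=0$, so $I_\mu(u)=\max_{s\in\mathbb{R}}I_\mu(s*u)$. Put $\beta:=a_2/a_1>1$. Then $\beta u\in\mathcal{S}(a_2)$ and $s*(\beta u)=\beta(s*u)$, hence by Remark \ref{rem02.5}(1)
$$m_\mu(a_2)\le \max_{s\in\mathbb{R}}I_\mu\big(s*(\beta u)\big)=\max_{s\in\mathbb{R}}I_\mu\big(\beta(s*u)\big)=:h(\beta),$$
where $h(\sigma):=\max_{s\in\mathbb{R}}I_\mu(\sigma(s*u))=\max_{s\in\mathbb{R}}I_\mu(s*(\sigma u))$ for $\sigma\ge 1$, and $h(1)=I_\mu(u)$. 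So it is enough to show $h$ is nonincreasing on $[1,\infty)$, for then $m_\mu(a_2)\le h(\beta)\le h(1)=I_\mu(u)\le m_\mu(a_1)+\varepsilon$, and $\varepsilon\to0^+$ concludes.

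To study $h$, set $\Phi(\sigma,s):=I_\mu(\sigma(s*u))$, which is $C^\infty$ on $(0,\infty)\times\mathbb{R}$ with
$$\Phi(\sigma,s)=\frac{\mu}{\theta}\sigma^\theta e^{\theta(1+\gamma_\theta)s}\|\nabla u\|_\theta^\theta+\frac12\sigma^2 e^{2s}\|\nabla u\|_2^2+\sigma^4 e^{(N+2)s}\int_{\mathbb{R}^N}|u|^2|\nabla u|^2-\frac1p\sigma^p e^{p\gamma_p s}\|u\|_p^p.$$
For each $\sigma>0$, $s\mapsto\Phi(\sigma,s)$ attains its maximum at the unique point $s_\sigma:=s(\sigma u)$ (Lemma \ref{lem2.4}(1)), and $\sigma\mapsto s_\sigma$ is continuous (Lemma \ref{lem2.4}(2)); hence by the envelope (Danskin) lemma $h(\sigma)=\Phi(\sigma,s_\sigma)$ is differentiable with $h'(\sigma)=\partial_\sigma\Phi(\sigma,s_\sigma)$. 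Writing $\mathcal{A}:=\mu\sigma^\theta e^{\theta(1+\gamma_\theta)s_\sigma}\|\nabla u\|_\theta^\theta\ge0$, $\mathcal{B}:=\sigma^2 e^{2s_\sigma}\|\nabla u\|_2^2>0$, $\mathcal{C}:=\sigma^4 e^{(N+2)s_\sigma}\int_{\mathbb{R}^N}|u|^2|\nabla u|^2>0$ (positivity of $\mathcal{B},\mathcal{C}$ since $u\not\equiv0$), the stationarity $\partial_s\Phi(\sigma,s_\sigma)=P_\mu(\sigma(s_\sigma*u))=0$ reads $(1+\gamma_\theta)\mathcal{A}+\mathcal{B}+(N+2)\mathcal{C}=\gamma_p\,\sigma^p e^{p\gamma_p s_\sigma}\|u\|_p^p$. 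Substituting this into $\sigma\,\partial_\sigma\Phi(\sigma,s_\sigma)=\mathcal{A}+\mathcal{B}+4\mathcal{C}-\sigma^p e^{p\gamma_p s_\sigma}\|u\|_p^p$ gives
$$\sigma\,h'(\sigma)=\frac{1}{\gamma_p}\Big[(\gamma_p-1-\gamma_\theta)\mathcal{A}+(\gamma_p-1)\mathcal{B}+(4\gamma_p-N-2)\mathcal{C}\Big].$$

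Now one checks the sign of the bracket. Since $\gamma_p=\frac{N(p-2)}{2p}$, the hypothesis $p\le 2^*=\frac{2N}{N-2}$ is exactly $\gamma_p\le1$, which gives $\gamma_p-1\le0$ and, together with $\gamma_\theta>0$, also $\gamma_p-1-\gamma_\theta<0$; and $4\gamma_p-(N+2)<0$ is exactly $p<\frac{4N}{N-2}=2\cdot2^*$, which holds since $p\le 2^*<2\cdot2^*$. As $\mathcal{A}\ge0$, $\mathcal{B}>0$, $\mathcal{C}>0$, it follows that $h'(\sigma)<0$ for all $\sigma>0$, so $h$ is (strictly) decreasing on $[1,\infty)$; this completes the proof along the lines above.

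The delicate point — and the reason the statement, and hence Theorem \ref{Thm1.1'}, is restricted to $p\le 2^*$ instead of the full range $p<2\cdot2^*$ — is precisely the middle coefficient $\gamma_p-1$: it is nonpositive exactly when $p\le 2^*$, whereas for $2^*<p<2\cdot2^*$ the term $(\gamma_p-1)\mathcal{B}$ turns positive and this monotonicity argument fails. Everything else (joint smoothness of $\Phi$, applicability of Danskin's lemma via Lemma \ref{lem2.4}, and the nondegeneracy $\mathcal{B},\mathcal{C}>0$) is routine.
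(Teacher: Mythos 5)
Your proof is correct, but it follows a genuinely different route from the paper's. The paper's argument is purely algebraic: with $k=a/b\ge 1$ it bounds each term of $I_\mu(s*(ku))$ by the corresponding term of the fibering map of $u$ evaluated at the shifted dilation parameter $ke^{s}$, using $k^{\theta}\le k^{\theta(1+\gamma_\theta)}$, $k^{4}\le k^{N+2}$ and $-k^{p}\le -k^{p\gamma_p}$ (the last being exactly $\gamma_p\le 1$, i.e.\ $p\le 2^*$), and then reparametrizes the max; no differentiation, no Pohozaev identity, and no envelope theorem are needed. You instead differentiate the value function $h(\sigma)=\max_s I_\mu(s*(\sigma u))$ in the amplitude $\sigma$ via Danskin's lemma (which is legitimate here: joint smoothness of $\Phi$, uniqueness of $s_\sigma=s(\sigma u)$ from Lemma \ref{lem2.4}(1) and its continuity from Lemma \ref{lem2.4}(2) justify $h'(\sigma)=\partial_\sigma\Phi(\sigma,s_\sigma)$ by exactly the two-sided squeeze used in Lemma \ref{lem5.2}), and you use the stationarity $P_\mu(\sigma(s_\sigma*u))=0$ to eliminate the nonlinear term and read off the sign from the coefficients $\gamma_p-1-\gamma_\theta<0$, $\gamma_p-1\le 0$, $4\gamma_p-(N+2)<0$; your computation checks out, including the strictness coming from $\mathcal{C}>0$. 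Both arguments locate the obstruction to $p>2^*$ in the same quantity $\gamma_p-1$, so your closing diagnosis agrees with the paper's remark about why Theorem \ref{Thm1.1'} stops at $2^*$. What your version buys is a quantitative statement ($h$ is strictly decreasing, hence $m_\mu$ would be strictly decreasing if the infimum were attained) and a transparent identification of which term fails for $2^*<p<2\cdot 2^*$; what the paper's version buys is brevity and the avoidance of any differentiability discussion of the value function.
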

\begin{proof}
    In fact, for $0<b\leq a$ and $u \in \mathcal{S}(b)$ satisfying $\max_{s_0 \in \mathbb{R}}I_\mu(s_0*u) \leq m_\mu(b) + \varepsilon$, set $k=\frac{a}{b} \geq 1$, then $\tilde{u}:=ku \in \mathcal{S}(a)$ and
    \begin{align*}
        &\,m_\mu(a)   \\
        \leq& \max_{s \in \mathbb{R}}I_\mu(s*\tilde{u})  \\
        \leq& \max_{s \in \mathbb{R}} \left\{ \frac{\mu}{\theta} e^{\theta(1+\gamma_\theta)s} k^\theta \int_{\mathbb{R}^N} |\nabla u|^{\theta} + \frac{1}{2}e^{2s} k^2\int_{\mathbb{R}^N} |\nabla u|^2 + e^{(N+2)s}k^4\int_{\mathbb{R}^N} |u|^2 |\nabla u|^2 -\frac{1}{p}e^{p\gamma _ps}k^p\int_{\mathbb{R}^N}|u|^p   \right\} \\
        \leq& \max_{s \in \mathbb{R}} \left\{  \frac{\mu}{\theta} \left( ke^s\right)^{\theta(1+\gamma_\theta)}  \int_{\mathbb{R}^N} |\nabla u|^{\theta} + \frac{1}{2}\left( ke^s\right)^{2} \int_{\mathbb{R}^N} |\nabla u|^2 + \left( ke^s\right)^{(N+2)}\int_{\mathbb{R}^N} |u|^2 |\nabla u|^2 -\frac{1}{p}\left( ke^s\right)^{p\gamma _p}\int_{\mathbb{R}^N}|u|^p    \right\}  \\
        \leq& \max_{s_0 \in \mathbb{R}} \left\{\frac{\mu}{\theta} e^{\theta(1+\gamma_\theta)s_0}  \int_{\mathbb{R}^N} |\nabla u|^{\theta} + \frac{1}{2}e^{2s_0} \int_{\mathbb{R}^N} |\nabla u|^2 + e^{(N+2)s_0}\int_{\mathbb{R}^N} |u|^2 |\nabla u|^2 -\frac{1}{p}e^{p\gamma _ps_0}\int_{\mathbb{R}^N}|u|^p  \right\}  \\
        \leq&\, m_\mu(b) + \varepsilon,
    \end{align*}
    thus $m_\mu(a)\leq m_\mu(b)$.
\end{proof}

\begin{remark}
    It is easy to see that $m(a)$ is nonincreasing with respect to $a$ for $4+\frac{4}{N}<p\leq 2^*$.
\end{remark}

\begin{lemma}
    If $a>0$ and $\mu >0$, then for any $\varepsilon >0$, there exists $\delta \in (0,1)$ such that $m_\mu(a) \leq m(a)+\varepsilon$ for $\mu \in (0,\delta)$.
\end{lemma}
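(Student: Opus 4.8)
The plan is to compare the two minimax values directly on a single, well-chosen fibre map. By Remark~\ref{rem02.5}(1) one has $m_\mu(a)=\inf_{u\in\mathcal S(a)}\max_{s\in\mathbb R}I_\mu(s*u)$, and by the analogue of Remark~\ref{rem02.5}(1) for $I$ (valid because parts (1),(3) of Lemma~\ref{lem2.4} hold for $I$ by Remark~\ref{rem02.5}(2)) one has $m(a)=\inf_{u\in\mathcal S'(a)}\max_{s\in\mathbb R}I(s*u)$. Hence it will be enough to exhibit, for a given $\varepsilon>0$, a single $u\in C_c^\infty(\mathbb R^N)\cap\mathcal S(a)\subset\Upsilon$, depending only on $a$ and $\varepsilon$, such that $\max_{s}I(s*u)<m(a)+\tfrac\varepsilon2$ and $\max_s I_\mu(s*u)\le\max_s I(s*u)+\tfrac\varepsilon2$ for all sufficiently small $\mu$; then $m_\mu(a)\le\max_s I_\mu(s*u)\le m(a)+\varepsilon$.

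The first step is a density/approximation argument producing such a $u$. I would pick $w\in\mathcal S'(a)$ with $\max_s I(s*w)<m(a)+\tfrac\varepsilon4$, then truncate and mollify $w$ to obtain $w_n\in C_c^\infty(\mathbb R^N)$ with $w_n\to w$ in $H^1(\mathbb R^N)$ and $w_n^2\to w^2$ in $H^1(\mathbb R^N)$, and finally replace $w_n$ by $\tfrac{a}{\|w_n\|_2}w_n$ so that $w_n\in C_c^\infty(\mathbb R^N)\cap\mathcal S(a)$. From $w_n^2\to w^2$ in $H^1$ one gets $\int_{\mathbb R^N}|w_n|^2|\nabla w_n|^2=\tfrac14\|\nabla(w_n^2)\|_2^2\to\int_{\mathbb R^N}|w|^2|\nabla w|^2$, and, using $H^1\hookrightarrow L^{2^*}$ together with interpolation with $L^2$, also $\|w_n\|_p\to\|w\|_p$ for $p\in[2,2\cdot2^*]$. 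Since $I(s*v)$ is an explicit function of $\|\nabla v\|_2^2$, $\int_{\mathbb R^N}|v|^2|\nabla v|^2$ and $\|v\|_p^p$, these convergences together with $s(w_n)\to s(w)$ (Remark~\ref{rem02.5}(2)) give $\max_s I(s*w_n)=I(s(w_n)*w_n)\to I(s(w)*w)=\max_s I(s*w)$, so we may fix $u:=w_n$ for $n$ large with $\max_s I(s*u)<m(a)+\tfrac\varepsilon2$.

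The second step is to control the maximizer uniformly in $\mu$. By Lemma~\ref{lem2.4}(1) applied to $I_\mu$, $\max_s I_\mu(s*u)=I_\mu(s_\mu(u)*u)$ with $s_\mu(u)$ the unique zero of $s\mapsto P_\mu(s*u)$, and Remark~\ref{rem02.5}(3) yields $s_\mu(u)\le C(u)$ for all $\mu\in(0,1)$. For a lower bound, suppose along some $\mu_n\to0^+$ that $s_{\mu_n}(u)\to-\infty$. Then $\int_{\mathbb R^N}|s_{\mu_n}(u)*u|^2|\nabla(s_{\mu_n}(u)*u)|^2=e^{(N+2)s_{\mu_n}(u)}\int_{\mathbb R^N}|u|^2|\nabla u|^2\to0<\zeta$, so the upper bound in Lemma~\ref{lem2.2} applied to $s_{\mu_n}(u)*u$ gives
\begin{align*}
I_{\mu_n}(s_{\mu_n}(u)*u)&\le \tfrac{\mu_n}{\theta}e^{\theta(1+\gamma_\theta)s_{\mu_n}(u)}\|\nabla u\|_\theta^\theta+e^{2s_{\mu_n}(u)}\|\nabla u\|_2^2\\
&\quad+e^{(N+2)s_{\mu_n}(u)}\int_{\mathbb R^N}|u|^2|\nabla u|^2\longrightarrow0,
\end{align*}
which contradicts $I_{\mu_n}(s_{\mu_n}(u)*u)=\max_s I_{\mu_n}(s*u)\ge m_{\mu_n}(a)\ge\tfrac\zeta8>0$ (Lemma~\ref{lem2.5}(1), with $\zeta$ independent of $\mu$). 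Hence there is $M=M(u)$ with $|s_\mu(u)|\le M$ for all $\mu\in(0,1)$.

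The conclusion is then immediate: since $\theta(1+\gamma_\theta)>0$, for every $\mu\in(0,1)$
\begin{align*}
\max_{s\in\mathbb R} I_\mu(s*u)=I(s_\mu(u)*u)+\tfrac{\mu}{\theta}e^{\theta(1+\gamma_\theta)s_\mu(u)}\|\nabla u\|_\theta^\theta\le \max_{s\in\mathbb R} I(s*u)+\mu\,C_0(u),
\end{align*}
where $C_0(u):=\tfrac1\theta e^{\theta(1+\gamma_\theta)M}\|\nabla u\|_\theta^\theta>0$; taking $\delta:=\min\{1,\varepsilon/(2C_0(u))\}$ gives $m_\mu(a)\le\max_s I_\mu(s*u)<m(a)+\varepsilon$ for $\mu\in(0,\delta)$. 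The hard part is the first step: one must approximate an arbitrary near-minimizer $w\in\mathcal S'(a)$ by functions lying in the smaller space $\Upsilon$ (indeed in $C_c^\infty$) in a topology strong enough that both the quasilinear term $\int_{\mathbb R^N}|w|^2|\nabla w|^2$ and the $L^p$-norm (for the full range $p\le 2\cdot2^*$, where $H^1$-convergence alone is insufficient) pass to the limit, so that $\max_s I(s*w_n)\to\max_s I(s*w)$; granting this, the remaining steps are only the elementary observation that the perturbing term $\tfrac\mu\theta\|\nabla(\cdot)\|_\theta^\theta$ is uniformly small along the uniformly bounded fibre $\{s_\mu(u)*u\}$.
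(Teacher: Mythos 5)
Your argument is correct in outline and reaches the same estimate $m_\mu(a)\le \max_s I(s*u)+\mu C_0(u)$ that drives the paper's proof, but you construct the test function $u$ by a genuinely different route. The paper does not use a density argument at all: it takes $w$ to be an \emph{exact} ground state with $I(w)=m(a)$ and $\Theta(w,\lambda)=0$, whose existence is imported from \cite{ZZJ2025} or \cite{ZCW2023}, invokes the regularity result \cite[Lemma 2.6]{LLW2013-2} to get $w\in C^{2}\cap L^\infty$, and then simply multiplies by a cutoff $\phi$ and renormalizes; for such a $w_R$ all the convergences ($H^1$, the quasilinear term, $s_0(w_R)\to s_0(w)=0$) are immediate, and the perturbation term is controlled exactly as you do via Remark \ref{rem02.5}(3). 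Your version instead starts from an arbitrary near-minimizer $w\in\mathcal S'(a)$ for the minimax characterization of $m(a)$ and approximates it by $C_c^\infty$ functions; this buys independence from the external existence-and-regularity result, but the price is precisely the density step you flag as ``the hard part,'' and as written it is only sketched. Two cautions there: (i) you must truncate in \emph{amplitude} first (replace $w$ by $\max\{-M,\min\{w,M\}\}$, for which $\int|w^M|^2|\nabla w^M|^2=\int_{\{|w|<M\}}|w|^2|\nabla w|^2\to\int|w|^2|\nabla w|^2$) before mollifying, since for an unbounded $w$ the convergence $\int|w*\rho_\epsilon|^2|\nabla(w*\rho_\epsilon)|^2\to\int|w|^2|\nabla w|^2$ is not straightforward, whereas for bounded compactly supported functions it follows from $\|w*\rho_\epsilon\|_\infty\le\|w\|_\infty$ and dominated convergence; (ii) for the convergence of the $L^p$-terms you only need $p\le 2^*<2\cdot2^*$ in this section, so interpolation between $L^2$-convergence and boundedness in $L^{2\cdot2^*}$ suffices and the full strength of $w_n^2\to w^2$ in $H^1$ is not required. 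Your second step also proves a two-sided bound $|s_\mu(u)|\le M$, of which only the upper bound (Remark \ref{rem02.5}(3), the same tool the paper uses) is actually needed since $\theta(1+\gamma_\theta)>0$. With the density lemma supplied in full, your proof is a valid and somewhat more self-contained alternative to the paper's.
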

\begin{proof}
Take $w$ be the Schwarz symmetric function such that $I(w)= m(a)$ and $\Theta(w,\lambda)=0$ by \cite{ZZJ2025} or \cite{ZCW2023}. And $w$ is nonnegative, radially symmetric and nonincreasing. By \cite[Lemma2.6]{LLW2013-2}, we get $w \in C^{2,\alpha}$ for some $\alpha >0$. It is easy to see that $w \in \mathcal{S}'(a) \cap C^2(\mathbb{R}^N) \cap L^\infty(\mathbb{R}^N)$. Set $\phi \in C^\infty(\mathbb{R}^N)$
be a radial function such that $\phi = 1$ for $|x| \leq R$ and $\phi=0$
for $|x| \geq 2R$, then $w_R := \frac{a \phi w}{\|\phi w\|_2} \in \mathcal{S}(a)$.
It follows from Lemma \ref{lem2.4} that
\begin{align*}
    m_\mu(a) \leq \max_{s \in \mathbb{R}} I_\mu(s*w_R)
    = &\; \frac{\mu}{\theta} \int_{\mathbb{R}^N} |\nabla(s_\mu(w_R)*w_R)|^\theta +I(s_\mu(w_R)*w_R)   \\
    \leq &\; \frac{\mu}{\theta} \int_{\mathbb{R}^N} |\nabla(s_\mu(w_R)*w_R)|^\theta +I(s_0(w_R)*w_R),
\end{align*}
where $I(s_0(w_R)*w_R)=\max_{s \in \mathbb{R}} I(s*w_R)$.
By Remark \ref{rem02.5}(3), we conclude that
\begin{align*}
    \frac{\mu}{\theta} \int_{\mathbb{R}^N} |\nabla(s_\mu(w_R)*w_R)|^\theta = \frac{\mu}{\theta} e^{\theta(1+\gamma_\theta)s_\mu(w_R)}\int_{\mathbb{R}^N} |\nabla(w_R)|^\theta \leq \mu C(w_R)= \mu C(R).
\end{align*}
It is clear that $\int_{\mathbb{R}^N} |w_R|^2 |\nabla w_R|^2 \rightarrow \int_{\mathbb{R}^N} |w|^2 |\nabla w|^2$ and $w_R \rightarrow w$ in $H^1(\mathbb{R}^N)$.
And it follows from Remark \ref{rem02.5}(2) that $s_0(w_R) \rightarrow s_0(w)=0$, which implies
\begin{align*}
   m_\mu(a) &\leq I(w)+o_R(1)+\mu C(R)  \\
    &= m(a) +o_R(1)+\mu C(R).
\end{align*}
Let fixed $R$ be sufficiently large such that $o_R(1)\leq \frac{\varepsilon}{2}$, and take $\delta$ sufficiently small satisfying $\mu C(R) \leq \frac{\varepsilon}{2}$ for $\mu \in (0,\delta)$, thus we have
$$ m_\mu(a) \leq m(a) +\varepsilon.$$
\end{proof}

Now, we can get the critical point of $I_\mu|_{\mathcal{S}(a)}$.

\begin{lemma}\label{lem3.5'}
For any fixed $\mu \in (0,1]$, there exist a $u_\mu \in \Upsilon \backslash \{0\}$ and a $\lambda_\mu \in \mathbb{R}$ such that

(1) $I'_\mu(u_\mu) +\lambda_\mu u_\mu=0;$

(2) $P_\mu(u_\mu)=0;$

(3) $I_\mu(u_\mu)=m_\mu(a);$

(4) $\|u_\mu\|_2 = a.$
\end{lemma}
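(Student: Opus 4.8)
The plan is to produce the desired critical point as a limit of a well-chosen Palais--Smale sequence for $I_\mu$ on $\mathcal{S}(a)$ at the mountain-pass level $m_\mu(a)$, exploiting that this level is both a minimax value over a homotopy stable family and the infimum over the Pohozaev manifold $\mathcal{P}_\mu(a)$. First I would apply Lemma \ref{lem3.4.1} with $\mathcal{F} = \{\{u\} : u \in \mathcal{S}(a)\}$ (singletons, $B=\emptyset$): by Remark \ref{rem02.5}(1) the associated level $E_\mu^{\mathcal F}(a)$ equals $\inf_{u\in\mathcal{S}(a)}\max_{s\in\mathbb R}I_\mu(s*u)=\inf_{u\in\mathcal{P}_\mu(a)}I_\mu(u)=m_\mu(a)$, which is $>0$ by Lemma \ref{lem2.5}(1). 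Hence there is a Palais--Smale sequence $\{u_n\}\subset\mathcal{P}_\mu(a)$ for $I_\mu|_{\mathcal{S}(a)}$ at level $m_\mu(a)$: $I_\mu(u_n)\to m_\mu(a)$ and $\|I'_\mu|_{\mathcal{S}(a)}(u_n)\|_{u_n,*}\to0$. By Lemma \ref{lem2.5}(2), $\{u_n\}$ is bounded in $\Upsilon$, so up to a subsequence $u_n\rightharpoonup u_\mu$ in $\Upsilon$ and $u_n\to u_\mu$ in $L^q_{loc}$ for all $q\in[2,2\cdot2^*)$ and a.e.

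Next I would extract the Lagrange multiplier: by the standard argument on constrained manifolds there exist $\lambda_n\in\mathbb{R}$ (obtained by testing $I'_\mu(u_n)$ against $u_n$ and using $\|u_n\|_2^2=a^2$) with $I'_\mu(u_n)+\lambda_n u_n\to0$ in $\Upsilon^*$; boundedness of $\{u_n\}$ together with Lemma \ref{lambda}-type positivity shows $\{\lambda_n\}$ is bounded and, up to a subsequence, $\lambda_n\to\lambda_\mu$. To upgrade weak to strong convergence I would first rule out vanishing: if $u_\mu=0$, then since $\{u_n\}\subset\mathcal{P}_\mu(a)$ one gets $\int|u_n|^p\to0$ by the Gagliardo--Nirenberg inequality \eqref{GNI} and $L^q_{loc}$ decay combined with a Lions-type lemma, forcing $I_\mu(u_n)\to0\ne m_\mu(a)>0$, a contradiction; so $u_\mu\ne0$. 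Then I would test the equation $I'_\mu(u_n)+\lambda_n u_n=o_n(1)$ against $u_n-u_\mu$ and use the weak convergence, the a.e. convergence of gradients (via a Boccardo--Murat / truncation argument for the $\theta$-Laplacian and quasilinear terms), and the Brezis--Lieb lemma to deduce $u_n\to u_\mu$ strongly in $\Upsilon$. Passing to the limit then yields $I'_\mu(u_\mu)+\lambda_\mu u_\mu=0$, which is (1); strong convergence also gives $\|u_\mu\|_2=a$, which is (4), and $I_\mu(u_\mu)=m_\mu(a)$, which is (3). Finally, (2) follows because any critical point of $I_\mu|_{\mathcal{S}(a)}$ satisfies the Pohozaev identity $P_\mu(u_\mu)=0$, as recorded in Section \ref{preliminary}.

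The main obstacle I anticipate is the strong convergence step in the presence of the quasilinear term $\int|u|^2|\nabla u|^2$ and the $\theta$-Laplacian perturbation: the functional is only $\mathcal C^1$ on $\Upsilon$, the map $u\mapsto|u|^2\nabla u$ is not weakly continuous, and one must carefully exploit the structure of $\Upsilon=W^{1,\theta}\cap H^1$ to recover a.e. convergence of $\nabla u_n$ and then control the nonlinear terms; this is where the choice $\theta\in\left(\frac{4N}{N+2},\min\{N,\frac{4N+4}{N+2}\}\right)$ and the boundedness from Lemma \ref{lem2.5}(2) are essential, and where lower semicontinuity of $I_\mu$ must be combined with the reverse inequality coming from the Palais--Smale condition to pin down $I_\mu(u_\mu)=m_\mu(a)$ exactly rather than merely $\le m_\mu(a)$. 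A secondary point requiring care is confirming $\lambda_\mu$ is finite and the limit equation is genuinely attained (no loss of mass), for which the exclusion of vanishing above is the key input.
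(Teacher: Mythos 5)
The first half of your plan coincides with the paper: the singleton family in Lemma \ref{lem3.4.1}, the identification $E_\mu^{\mathcal F}(a)=\inf_{u\in\mathcal S(a)}\max_s I_\mu(s*u)=m_\mu(a)>0$ via Remark \ref{rem02.5}(1) and Lemma \ref{lem2.5}(1), boundedness of the Palais--Smale sequence from Lemma \ref{lem2.5}(2), the exclusion of vanishing, and the extraction of bounded multipliers $\lambda_n\to\lambda_\mu$ are all exactly what the paper does. The genuine gap is your compactness step. Testing $I'_\mu(u_n)+\lambda_n u_n=o_n(1)$ against $u_n-u_\mu$ cannot yield strong convergence in the non-radial space: the embedding $H^1(\mathbb{R}^N)\hookrightarrow L^p(\mathbb{R}^N)$ is not compact, so $\int_{\mathbb{R}^N}|u_n|^{p-2}u_n(u_n-u_\mu)$ need not vanish, and the multiplier term satisfies $\lambda_n\int_{\mathbb{R}^N}u_n(u_n-u_\mu)\to\lambda_\mu\left(a^2-\|u_\mu\|_2^2\right)$, which is precisely the quantity you are trying to prove is zero. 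No Boccardo--Murat a.e.\ gradient convergence repairs this; the obstruction is dichotomy (mass splitting between the weak limit and a piece escaping to infinity), not a failure of local convergence. A telling sign: your argument, if valid, would cover all $p\in\left(4+\frac4N,2\cdot2^*\right)$, whereas the paper's remark after this lemma explains that the non-radial result is restricted to $p\le2^*$ precisely because of the ingredient you omit.

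That ingredient is the dichotomy-exclusion via monotonicity of the ground-state level. After translating so that $v_n:=u_n(\cdot+y_n)\rightharpoonup v_\mu\neq0$, one passes to the limit in the equation to get $I'_\mu(v_\mu)+\lambda_\mu v_\mu=0$, hence $P_\mu(v_\mu)=0$ and $I_\mu(v_\mu)>0$. Setting $v_{1n}:=v_n-v_\mu$ and $b:=\lim_n\|v_{1n}\|_2$, the Brezis--Lieb-type splittings $I_\mu(v_n)=I_\mu(v_\mu)+I_\mu(v_{1n})+o_n(1)$ and $P_\mu(v_n)=P_\mu(v_\mu)+P_\mu(v_{1n})+o_n(1)$ show that if $b>0$ one can project $v_{1n}$ onto $\mathcal{P}_\mu(b_n)$ by dilations $s_n\to0$ and obtain $m_\mu(a)\le m_\mu(b)\le\liminf_n I_\mu(v_{1n})=m_\mu(a)-I_\mu(v_\mu)<m_\mu(a)$, a contradiction; here one needs $m_\mu(a)\le m_\mu(b)$ for $b\le a$ (Lemma \ref{lem6.1'}, which is where $p\le2^*$ enters) and the lower semicontinuity of $m_\mu$ (Remark \ref{rem7.2}). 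Only after concluding $b=0$, hence $\|v_\mu\|_2=a$ and $v_n\to v_\mu$ in $L^q(\mathbb{R}^N)$ for $q\in(2,2^*]$, does one recover convergence of the gradient and quasilinear terms --- and this is done not through a.e.\ convergence of gradients but by combining $P_\mu(v_n)+\gamma_p\int|v_n|^p\to P_\mu(v_\mu)+\gamma_p\int|v_\mu|^p$ with the weak lower semicontinuity of each individual term. You should replace your ``test against $u_n-u_\mu$'' step with this argument.
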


\begin{proof}
Let $u_n$ be a sequence in Lemma \ref{lem3.4.1}. Taking into account Lemma \ref{lem2.5}(2) and Lemma \ref{lem3.4.1}, we can conclude that $u_n$ is bounded in $\Upsilon$. Thus, up to a subsequence, there exists a $u_\mu \in \Upsilon$ such that
\begin{align*}
 &u_n \rightharpoonup u_\mu     \;\; \;\;\;\;\;\;\;\;\;\;\;\;\,  \mbox{in} \; \Upsilon , \\
&u_n \nabla u_n \rightharpoonup u \nabla u \;\;\;\;\,\mbox{in}   \;\; (L^2(\mathbb{R}^N))^N, \\
 &u_n \rightarrow  u_\mu        \;\;\;\;\;\,\;\;\;\;\;\;\;\;\;   \mbox{in} \;L^q_{loc}(\mathbb{R}^N)\; \mbox{for}\; \mbox{all}\; q \in (2,2^*], \\
 &u_n \rightarrow  u_\mu  \;\;\;\;\;\;\;\;\;\;\;\;\;\;  \,             \mbox{a.e.} \; \mbox{in} \; \mathbb{R}^N.
\end{align*}

\textbf{Case 1:} $u_\mu=0$. If
$$\limsup_{n \rightarrow \infty} \sup_{y \in \mathbb{R}^N} \int_{B_1(y)} |u_n|^2=0,$$
then $\int_{\mathbb{R}^N} |u_n|^p \rightarrow 0$ . Thus it yields from
\begin{align*}
\mu(1+\gamma_\theta) \int_{\mathbb{R}^N} |\nabla u_n|^{\theta} +  \int_{\mathbb{R}^N} |\nabla u_n|^2 + (N+2)\int_{\mathbb{R}^N} |u_n|^2 |\nabla u_n|^2
=P_\mu(u_n) +\gamma_p \int_{\mathbb{R}^N} |u_n|^p \rightarrow 0
\end{align*}
that $I_\mu(u_n) \rightarrow 0$, in contradiction with $m_\mu(a) >0$. Therefore,
$$\limsup_{n \rightarrow \infty} \sup_{y \in \mathbb{R}^N} \int_{B_1(y)} |u_n|^2>0,$$
We may assume, up to a subsequence, that $v_n:=u_n(\cdot+y_n) \rightharpoonup v_\mu \neq 0$. It follows that $I_\mu(v_n) \rightarrow m_\mu(a)$. By \cite[Lemma 3]{BL1983} and Lemma \ref{lem3.4.1}, we conclude that there exists a sequence $\lambda_n \in \mathbb{R}$ such that
\begin{equation}\label{eq3.7'}
I'_\mu(u_n) +\lambda_n u_n \rightarrow 0 \;\; \mbox{in} \; \Upsilon^*.
\end{equation}
It is easy to see that
\begin{equation}\label{eq3.7''}
I'_\mu(v_n) +\lambda_n v_n \rightarrow 0 \;\; \mbox{in} \; \Upsilon^*.
\end{equation}
Hence $\lambda_n=-\frac{1}{a^2}I'_\mu(v_n)[v_n]+o_n(1)$ is bounded in $\mathbb{R}$. We assume, up to subsequence, $\lambda_n \rightarrow \lambda_\mu$. Since $v_n$ is bounded, we conclude
\begin{equation}\label{eq3.8'}
I'_\mu(v_n) +\lambda_\mu v_n \rightarrow 0 \;\; \mbox{in} \; \Upsilon^*.
\end{equation}
As a result,
\begin{equation}\label{eq3.9''}
I'_\mu(v_\mu) +\lambda_\mu v_\mu = 0 \;\; \mbox{in} \; \Upsilon^*.
\end{equation}
It yields that $P_\mu(v_\mu)=0$ and $I_\mu(v_\mu)>0$.
Let $v_{1n}:=v_n-v_\mu$ and $b_n:=\|v_{1n}\|_2$, then $b_n \rightarrow b:=(a^2-\|v_\mu\|_2^2)^\frac{1}{2}$. It is easy to see that the following decomposition result for $\{v_n\}$
\begin{equation*}
I_\mu(v_n)=I_\mu(v_\mu)+I_\mu(v_{1n})+o_n(1)
\end{equation*}
and
\begin{equation*}
P_\mu(v_n)=P_\mu(v_\mu)+P_\mu(v_{1n})+o_n(1)
\end{equation*}
If $b >0$, then $b_n >0$ for $n$ large enough, it is easy to see that $P_\mu(v_{1n}) \rightarrow 0$, by uniqueness, there exist $s_n \rightarrow 0$ such that $s_n* v_{1n} \in \mathcal{P}_\mu(b_n)$. Therefore, by Lemma \ref{lem6.1'} and Remark \ref{rem7.2}, we conclude
\begin{align*}
    m_\mu(a) \leq m_\mu(b) &\leq \liminf_{n \rightarrow \infty}m_\mu(b_n)  \\
    &\leq \liminf_{n \rightarrow \infty} I_\mu(s_n*v_{1n})   \\
    &= \liminf_{n \rightarrow \infty}I_\mu(v_{1n})   \\
    &= \liminf_{n \rightarrow \infty}I_\mu(v_{n}) - I_\mu(v_{\mu})    \\
    &= m_\mu(a) - I_\mu(v_{\mu})    \\
    &< m_\mu(a),
\end{align*}
this is impossible. Thus $b=0$, which means $\|v_\mu\|_2=a$ and $v_n \rightarrow  v_\mu$ in $L^2(\mathbb{R}^N)$.
It follows from the interpolation inequality that we obtain
$$\;\;v_n \rightarrow  v_\mu        \;\;\;\;\;\;\;\;\;\;\;\;\;\;\;   \mbox{in} \;L^q(\mathbb{R}^N)\; \mbox{for}\; \mbox{all}\; q \in (2,2^*].$$
As a result,
$$ P_\mu(v_n) +\gamma_p \int_{\mathbb{R}^N} |v_n|^p  \rightarrow P_\mu(v_\mu) +\gamma_p \int_{\mathbb{R}^N} |v_\mu|^p.$$
Taking into account the weak lower semicontinuous property \cite{CJS2010}, we have that
\begin{equation}\label{eq3.10.1}
\mu \int_{\mathbb{R}^N} |\nabla v_n|^{\theta} \rightarrow \mu \int_{\mathbb{R}^N} |\nabla v_\mu|^{\theta},
\end{equation}
\begin{equation}\label{eq3.11.1}
 \int_{\mathbb{R}^N} |\nabla v_n|^2 \rightarrow \int_{\mathbb{R}^N} |\nabla v_\mu|^2,
\end{equation}
\begin{equation}\label{eq3.12.1}
 \int_{\mathbb{R}^N} |v_n|^2 |\nabla v_n|^2 \rightarrow \int_{\mathbb{R}^N} |v_\mu|^2 |\nabla v_\mu|^2.
\end{equation}
Therefore,
$$ I_\mu(v_\mu)=\lim_{n\rightarrow \infty} I_\mu(v_n)=m_\mu(a).$$
It is easy to see that $\{v_n\} \subset \mathcal{P}_\mu(a)$ is a Palais-Smale sequence  for $I_\mu$ constrained on $\mathcal{S}(a)$ at the
level $m_\mu(a)$. Without loss of generality, we can write $v_\mu$ as $u_\mu$, thus we complete the proof.

\textbf{Case 2:} $u_\mu \neq 0$.
By the similar approach to $v_n \rightharpoonup v_\mu \neq 0$, we can complete the proof.
\end{proof}

\begin{remark}
    We apply a different method to overcome the difficulty caused by the lack of compactness in non-radial space. If Lemmas \ref{lem6.1'} were valid for $p \in \left(4+\frac{4}{N},2\cdot2^*\right)$, we would obtain the existence
of ground state normalized solutions for $p \in \left(4+\frac{4}{N},2\cdot2^*\right)$.
\end{remark}

\begin{lemma}\label{lem4.1'}
Let $\mu_n \rightarrow 0^+$, there exist sequence $\lambda_{\mu_n} \geq 0$ and $u_{\mu_n} \in \mathcal{S}(a)$ such that $I'_{\mu_n}(u_{\mu_n})+\lambda_{\mu_n} u_{\mu_n}=0$, $I_{\mu_n}(u_{\mu_n})=m_{\mu_n}(a) \rightarrow c>0$. Then

(1) there exists a subsequence $u_{\mu_n}\rightharpoonup u$ in $H^1(\mathbb{R}^N)$ with $u\in H^1(\mathbb{R}^N) \cap L^{\infty}(\mathbb{R}^N)$;

(2) there exists a $\lambda \in \mathbb{R}$ such that $\Theta(u,\lambda)=0$;

(3) $I(u)=c$, $0<\|u\|_2\leq a$; if $\lambda\neq0$, then $\|u\|_2=a$.
\end{lemma}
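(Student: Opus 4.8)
The argument is the classical ``$\mu\to0^+$'' limiting procedure of the perturbation method, and I would run it in four stages. First, uniform estimates. Since $I'_{\mu_n}(u_{\mu_n})+\lambda_{\mu_n}u_{\mu_n}=0$, $u_{\mu_n}$ is a critical point of $I_{\mu_n}|_{\mathcal{S}(a)}$, hence $P_{\mu_n}(u_{\mu_n})=0$, i.e. $u_{\mu_n}\in\mathcal{P}_{\mu_n}(a)$; since $I_{\mu_n}(u_{\mu_n})=m_{\mu_n}(a)$ is bounded, Lemma~\ref{lem2.5}(2) gives $C$ independent of $n$ with $\mu_n\int_{\mathbb{R}^N}|\nabla u_{\mu_n}|^{\theta}\le C$, $\int_{\mathbb{R}^N}|\nabla u_{\mu_n}|^2\le C$, $\int_{\mathbb{R}^N}|u_{\mu_n}|^2|\nabla u_{\mu_n}|^2\le C$. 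Thus $\{u_{\mu_n}\}$ is bounded in $H^1(\mathbb{R}^N)$ and $\{u_{\mu_n}^2\}$ in $H^1(\mathbb{R}^N)$, and a De Giorgi--Moser iteration on the (perturbed) equation gives a uniform bound $\|u_{\mu_n}\|_{\infty}\le C$. Up to a subsequence, $u_{\mu_n}\rightharpoonup u$ in $H^1(\mathbb{R}^N)$, $u_{\mu_n}\to u$ a.e. and in $L^q_{loc}(\mathbb{R}^N)$ for $q\in[2,2^*)$, $u_{\mu_n}\nabla u_{\mu_n}\rightharpoonup u\nabla u$ in $(L^2(\mathbb{R}^N))^N$, and $u\in H^1(\mathbb{R}^N)\cap L^{\infty}(\mathbb{R}^N)$.

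Next I would rule out vanishing and control the multiplier. From $P_{\mu_n}(u_{\mu_n})=0$ and \eqref{GNI},
\[
\int_{\mathbb{R}^N}|\nabla u_{\mu_n}|^2+(N+2)\int_{\mathbb{R}^N}|u_{\mu_n}|^2|\nabla u_{\mu_n}|^2\le\gamma_p\int_{\mathbb{R}^N}|u_{\mu_n}|^p\le C\Big(\int_{\mathbb{R}^N}|u_{\mu_n}|^2|\nabla u_{\mu_n}|^2\Big)^{\frac{N(p-2)}{2(N+2)}},
\]
and since $p>4+\frac4N$ the exponent exceeds $1$, so $\int_{\mathbb{R}^N}|u_{\mu_n}|^2|\nabla u_{\mu_n}|^2\ge\delta_0>0$ and hence $\int_{\mathbb{R}^N}|u_{\mu_n}|^p\ge\delta_1>0$, uniformly in $n$. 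If vanishing occurred, i.e. $\limsup_n\sup_{y\in\mathbb{R}^N}\int_{B_1(y)}|u_{\mu_n}|^2=0$, then $u_{\mu_n}\to0$ in $L^q(\mathbb{R}^N)$, $q\in(2,2^*)$, and interpolating with the uniform bound on $\|u_{\mu_n}\|_{2\cdot2^*}$ (from $u_{\mu_n}^2\in H^1$) would give $\|u_{\mu_n}\|_p\to0$, a contradiction. So, replacing $u_{\mu_n}$ by a translate $u_{\mu_n}(\cdot+y_n)$ — which changes neither the equation, nor $\lambda_{\mu_n}$, nor $I_{\mu_n}(u_{\mu_n})$, nor $P_{\mu_n}(u_{\mu_n})$ — we may assume $u\ne0$. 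Testing the equation with $u_{\mu_n}$ yields
\[
\lambda_{\mu_n}a^2=\int_{\mathbb{R}^N}|u_{\mu_n}|^p-\mu_n\int_{\mathbb{R}^N}|\nabla u_{\mu_n}|^{\theta}-\int_{\mathbb{R}^N}|\nabla u_{\mu_n}|^2-4\int_{\mathbb{R}^N}|u_{\mu_n}|^2|\nabla u_{\mu_n}|^2,
\]
which is bounded, so up to a subsequence $\lambda_{\mu_n}\to\lambda\ge0$.

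The main obstacle is passing to the limit in the equation. For $\varphi\in C_c^\infty(\mathbb{R}^N)$ the perturbation term is harmless, $\big|\mu_n\int_{\mathbb{R}^N}|\nabla u_{\mu_n}|^{\theta-2}\nabla u_{\mu_n}\cdot\nabla\varphi\big|\le\mu_n^{1/\theta}(\mu_n\int_{\mathbb{R}^N}|\nabla u_{\mu_n}|^{\theta})^{\frac{\theta-1}{\theta}}\|\nabla\varphi\|_{\theta}\to0$, and the linear and the $\lambda_{\mu_n}$-terms converge by weak convergence and local compactness. The delicate terms are $\int_{\mathbb{R}^N}|u_{\mu_n}|^2\nabla u_{\mu_n}\cdot\nabla\varphi$ and $\int_{\mathbb{R}^N}u_{\mu_n}|\nabla u_{\mu_n}|^2\varphi$, where a priori $|\nabla u_{\mu_n}|^2$ only converges weakly as a measure, so a concentration defect could survive — this is the step I expect to be hardest. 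I would resolve it by proving $u_{\mu_n}\to u$ strongly in $H^1_{loc}(\mathbb{R}^N)$ (equivalently $\nabla u_{\mu_n}\to\nabla u$ a.e.): test the equation with $(u_{\mu_n}-u)\chi_R^2$ for a cutoff $\chi_R$ and use the uniform ellipticity of the principal part $\xi\mapsto(1+2u_{\mu_n}^2)\xi$ together with $u_{\mu_n}\nabla u_{\mu_n}\rightharpoonup u\nabla u$ in $L^2$, the uniform $L^{\infty}$-bound, and the negligibility of the $\theta$-term, to get $\int_{B_R}|\nabla(u_{\mu_n}-u)|^2\to0$ for every $R$. Then $|\nabla u_{\mu_n}|^2\to|\nabla u|^2$ in $L^1_{loc}$, so every remaining term passes to the limit and $\Theta(u,\lambda)=0$; by \cite[Lemma~2.6]{LLW2013-2}, $u\in C^{2,\alpha}(\mathbb{R}^N)\cap L^{\infty}(\mathbb{R}^N)$. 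This gives (1) and (2).

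Finally, energy and mass. Since $\Theta(u,\lambda)=0$ gives $P(u)=0$, eliminating $\frac1p\int_{\mathbb{R}^N}|u|^p$ yields $I(u)=(\frac12-\frac1{p\gamma_p})\int_{\mathbb{R}^N}|\nabla u|^2+(1-\frac{N+2}{p\gamma_p})\int_{\mathbb{R}^N}|u|^2|\nabla u|^2$ with both coefficients positive (cf. the proof of Lemma~\ref{lem2.5}(2)); likewise $I_{\mu_n}(u_{\mu_n})$ is the same expression at $u_{\mu_n}$ plus a positive multiple of $\mu_n\int_{\mathbb{R}^N}|\nabla u_{\mu_n}|^{\theta}$, so weak lower semicontinuity gives $I(u)\le\liminf_n I_{\mu_n}(u_{\mu_n})=c$. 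On the other hand $u$ is admissible for $\sigma(\|u\|_2)$, hence $I(u)\ge\sigma(\|u\|_2)=m(\|u\|_2)\ge m(a)$ (using $\|u\|_2\le a$ and that $m$ is nonincreasing, the remark after Lemma~\ref{lem6.1'}), while $c\le m(a)$ by the lemma asserting $m_\mu(a)\le m(a)+\varepsilon$ for small $\mu$; therefore $I(u)=c=m(a)$. Equality in the lower semicontinuity forces $\mu_n\int_{\mathbb{R}^N}|\nabla u_{\mu_n}|^{\theta}\to0$, $\int_{\mathbb{R}^N}|\nabla u_{\mu_n}|^2\to\int_{\mathbb{R}^N}|\nabla u|^2$, $\int_{\mathbb{R}^N}|u_{\mu_n}|^2|\nabla u_{\mu_n}|^2\to\int_{\mathbb{R}^N}|u|^2|\nabla u|^2$, whence $u_{\mu_n}\to u$ strongly in $H^1(\mathbb{R}^N)$, $u_{\mu_n}\nabla u_{\mu_n}\to u\nabla u$ in $L^2$, and via $P_{\mu_n}(u_{\mu_n})=0$ also $\|u_{\mu_n}\|_p\to\|u\|_p$, so $u_{\mu_n}\to u$ in $L^p(\mathbb{R}^N)$. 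Passing to the limit in $I'_{\mu_n}(u_{\mu_n})[u_{\mu_n}]+\lambda_{\mu_n}\|u_{\mu_n}\|_2^2=0$ gives $\int_{\mathbb{R}^N}|\nabla u|^2+4\int_{\mathbb{R}^N}|u|^2|\nabla u|^2+\lambda a^2=\int_{\mathbb{R}^N}|u|^p$, while testing $\Theta(u,\lambda)=0$ with $u$ gives the same identity with $a^2$ replaced by $\|u\|_2^2$; subtracting, $\lambda(a^2-\|u\|_2^2)=0$. Hence $I(u)=c$ and $0<\|u\|_2\le a$, with $\|u\|_2=a$ whenever $\lambda\ne0$, which is (3).
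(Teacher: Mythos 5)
Your overall architecture is sound, and much of it tracks the paper: the uniform estimates from Lemma~\ref{lem2.5}(2), the $L^\infty$ bound by Moser iteration, the exclusion of vanishing via \eqref{GNI}, and the boundedness of $\lambda_{\mu_n}$ all appear in the paper's proof in essentially the same form. Your part (3) is genuinely different: the paper runs a finite profile decomposition ($v, v_1,\dots,v_k$, each with mass bounded below) to exclude dichotomy and then recovers strong convergence of the energy terms, whereas you squeeze $I(u)$ between $c$ and $m(a)$ using $P(u)=0$, weak lower semicontinuity of the three gradient terms, the monotonicity of $m$ (Lemma~\ref{lem6.1'} and its remark, valid here since $p\le 2^*$), and the bound $m_\mu(a)\le m(a)+\varepsilon$. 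Granted $u\ne 0$ after translation and $\Theta(u,\lambda)=0$, that squeeze is correct and arguably cleaner than the paper's; the Lagrange-multiplier identity for the mass at the end coincides with the paper's.

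The gap is in the limit passage for part (2), which is the one genuinely hard step. Testing with $(u_{\mu_n}-u)\chi_R^2$ does not close the argument: the term $2\int_{\mathbb{R}^N} u_{\mu_n}|\nabla u_{\mu_n}|^2(u_{\mu_n}-u)\chi_R^2$ involves $|\nabla u_{\mu_n}|^2$, which is only bounded in $L^1(B_{2R})$ and may concentrate, while the factor $u_{\mu_n}(u_{\mu_n}-u)$ is bounded and tends to $0$ only a.e., not uniformly; a bounded $L^1$ sequence times a bounded a.e.-null sequence need not integrate to $o(1)$. The ellipticity of $\xi\mapsto(1+2u_{\mu_n}^2)\xi$ gives no leverage against this term, which is quadratic in $\nabla u_{\mu_n}$ but of order zero in the test function. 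This is exactly the difficulty the paper's proof is engineered around: it tests with $\psi e^{-u_{\mu_n}}$ and $\psi e^{+u_{\mu_n}}$, so that all gradient-squared contributions combine into $\int_{\mathbb{R}^N}(1+2u_{\mu_n}^2\mp 2u_{\mu_n})\psi e^{\mp u_{\mu_n}}|\nabla u_{\mu_n}|^2$ with a nonnegative integrand convex in the gradient, to which Fatou (weak lower semicontinuity) applies; the two resulting one-sided inequalities yield $\Theta(u,\lambda)=0$ without ever proving strong $H^1_{loc}$ convergence of the gradients. If you want to keep your route, you must upgrade the test function to a Boccardo--Murat type choice such as $(u_{\mu_n}-u)e^{\gamma(u_{\mu_n}-u)^2}\chi_R^2$ with $\gamma$ large, so that the derivative of the exponential weight absorbs the quadratic gradient term into the coercive part. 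As written, the assertion that ellipticity yields $\int_{B_R}|\nabla(u_{\mu_n}-u)|^2\to 0$ is not justified.
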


\begin{proof}
(1) It follows from $I'_{\mu_n}(u_{\mu_n})+\lambda_{\mu_n} u_{\mu_n}=0$ that $P_{\mu_n}(u_{\mu_n})=0$ for any $n \in \mathbb{N}^+$. By Lemma \ref{lem2.5}(2), $I_{\mu_n}(u_{\mu_n}) \rightarrow c>0$ implies that $u_{\mu_n}$ is bounded in $H^1(\mathbb{R}^N)$ and $\int_{\mathbb{R}^N} |u_{\mu_n}|^2 |\nabla u_{\mu_n}|^2 \leq C$. Thus, up to a subsequence, there exists a $u \in H^1(\mathbb{R}^N)$ such that
\begin{align*}
 &u_{\mu_n} \rightharpoonup u     \;\; \;\;\;\;\;\;\;\;\;\;\;\;\;\;\;\;\;  \mbox{in} \; H^1(\mathbb{R}^N) , \\
 &u_{\mu_n} \nabla u_{\mu_n} \rightharpoonup u \nabla u \;\;\;\;\,\,\mbox{in}   \; (L^2(\mathbb{R}^N))^N, \\
 &u_{\mu_n} \rightarrow  u        \;\;\;\;\;\;\;\;\;\;\;\;\;\;\;\;\;\;\;   \mbox{in} \;L^q_{loc}(\mathbb{R}^N)\;\, \mbox{for}\; \mbox{all}\; q \in (2,2^*], \\
 &u_{\mu_n} \rightarrow  u    \;\;\;\;\;\;\;\;\;\;\;\;\;\;\;\;\;\;\;          \mbox{a.e.} \; \mbox{in} \; \mathbb{R}^N.
\end{align*}
At the same time, it is easy to see that $\lambda_{\mu_n}=-\frac{1}{a^2}I'_{\mu_n}(u_{\mu_n})[u_{\mu_n}]$ is bounded in $\mathbb{R}$. So, up to a subsequence, $\lambda_{\mu_n} \rightarrow \lambda$ in $ \mathbb{R}$.

{\bf Claim:} There exists a constant $C>0$ such that $\|u_{\mu_n}\|_{\infty} \leq C$ and $\|u\|_{\infty} \leq C$.

{\bf Case 1:} If $\|u_{\mu_n}\|_\infty \leq 1$, then Claim is obvious.

{\bf Case 2:} If Case 1 is impossible, let
\begin{align*}
\begin{split}
w_n:=
\left \{
      \begin{array}{ll}
           u_{\mu_n},  &|u_{\mu_n}| > 1, \\
           0,           &|u_{\mu_n}| \leq1. \\
      \end{array}
\right.
\end{split}
\end{align*}
Set $M>1, r>0$ and
\begin{align*}
v_n:=\max\{-M,\min\{w_n, M\}\}.
\end{align*}
Let $\varphi=w_n|v_n|^{2r}$, then $\varphi \in \Upsilon$. By $I'_{\mu_n}(u_{\mu_n})+\lambda_{\mu_n} u_{\mu_n}=0$, we can infer that
\begin{align*}
&\ \int_{\mathbb{R}^N}|w_n|^{p-2}w_n\varphi -\lambda_{\mu_n} \int_{\mathbb{R}^N}w_n\varphi   \\
=&\ \mu_n \int_{\mathbb{R}^N} |\nabla w_n|^{\theta-2}\nabla w_n \cdot \nabla \varphi
+\int_{\mathbb{R}^N} \nabla w_n \cdot \nabla \varphi
+2\int_{\mathbb{R}^N}\left(|w_n|^2 \nabla w_n \cdot \nabla \varphi +w_n\varphi|\nabla w_n|^2\right)  \\
\geq &\ 2\int_{\mathbb{R}^N} |w_n|^2\nabla w_n \cdot \nabla \varphi  \\
=&\ 2\int_{\mathbb{R}^N}\left( |w_n|^2|\nabla w_n|^2|v_n|^{2r} +2r|w_n|^2|v_n|^{2r-2}w_nv_n\nabla w_n\cdot \nabla v_n \right)  \\
=&\ \frac{1}{2} \int_{\mathbb{R}^N} \left||v_n|^r \nabla (|w_n|^2)\right|^2 +\frac{4}{r}\int_{\mathbb{R}^N}\left| |w_n|^2\nabla (|v_n|^r)\right|^2   \\
\geq&\ \frac{1}{r+4} \int_{\mathbb{R}^N} \left| \nabla (|w_n|^2|v_n|^r)\right|^2   \\
\geq&\ \frac{C}{(r+2)^2} \left[\int_{\mathbb{R}^N} \left(|w_n|^2|v_n|^r\right)^{2^*}\right]^{\frac{2}{2^*}}.
\end{align*}
It is easy to see that $\left||w_n|^{p-2} -\lambda_{\mu_n} \right| \leq C|w_n|^{p-2}$ in spt$(w_n)$. Taking into account the interpolation inequality, we get
\begin{align*}
&\int_{\mathbb{R}^N}|w_n|^{p-2}w_n\varphi -\lambda_{\mu_n} \int_{\mathbb{R}^N}w_n\varphi  \\
\leq & C\int_{\mathbb{R}^N} |w_n|^{p-2}w_n\varphi   =C\int_{\mathbb{R}^N} |w_n|^p|v_n|^{2r}  \\
\leq & C\left(\int_{\mathbb{R}^N} |w_n|^{2\cdot2^*}\right)^{\frac{p-4}{2\cdot2^*}} \left[\int_{\mathbb{R}^N} \left(|v_n|^r|w_n|^2\right)^{\frac{4\cdot2^*}{2\cdot2^*-p+4}}\right]^{\frac{2\cdot2^*-p+4}{2\cdot2^*}}  \\
\leq & C \left[\int_{\mathbb{R}^N} \left(|v_n|^r|w_n|^2\right)^{\frac{4\cdot2^*}{2\cdot2^*-p+4}}\right]^{\frac{2\cdot2^*-p+4}{2\cdot2^*}}.
\end{align*}
As a result,
\begin{equation}\label{eq4.1}
\left[\int_{\mathbb{R}^N} \left(|v_n|^r|w_n|^2\right)^{2^*}\right]^{\frac{1}{2^*}} \leq C(r+2) \left[\int_{\mathbb{R}^N} \left(|v_n|^r|w_n|^2\right)^{\frac{4\cdot2^*}{2\cdot2^*-p+4}}\right]^{\frac{2\cdot2^*-p+4}{4\cdot2^*}}.
\end{equation}
Let $r_0={2^*-\frac{p}{2}}$, $d=\frac{2\cdot2^*-p+4}{4}$. Taking $r=r_0$ in \eqref{eq4.1}, and letting $M \rightarrow +\infty$, we can derive
\begin{equation}\label{eq4.2}
\|w_n\|_{2\cdot2^*d} \leq \ [C(r_0+2)]^\frac{1}{r_0+2}\|w_n\|_{2\cdot2^*}.
\end{equation}
Set $r_{i+1}+2=(r_i+2)d$ for $i\in\mathbb{N}^+$. Then, by induction, we infer that
\begin{equation}\label{eq4.3}
\|w_n\|_{2\cdot2^*d^{i+1}} \leq \ \prod^i_{k=0} [C(r_k+2)]^\frac{1}{r_k+2}\|w_n\|_{2\cdot2^*}   \leq C\|w_n\|_{2\cdot2^*}.
\end{equation}
Taking $i \rightarrow +\infty$ in \eqref{eq4.3}, we obtain
$$\|w_n\|_{\infty} \leq C.$$
Therefore,
\begin{equation}\label{eq4.4}
\|u_{\mu_n}\|_{\infty} \leq C,\ \|u\|_{\infty} \leq C.
\end{equation}

(2) Let $\varphi=\psi e^{-u_{\mu_n}}$ with $0\leq \psi \in \mathcal{C}^{\infty}_0(\mathbb{R}^N)$, we can derive
\begin{align*}
0=&\ (I'_{\mu_n}(u_{\mu_n})+\lambda_{\mu_n} u_{\mu_n})[\varphi] \\
 =&\ \mu_n \int_{\mathbb{R}^N} |\nabla u_{\mu_n}|^{\theta-2}\nabla u_{\mu_n} \cdot (\nabla \psi e^{-u_{\mu_n}}-\psi e^{-u_{\mu_n}}\nabla u_{\mu_n})       +\int_{\mathbb{R}^N} \nabla u_{\mu_n} \cdot (\nabla \psi e^{-u_{\mu_n}}-\psi e^{-u_{\mu_n}}\nabla u_{\mu_n})  \\
 &+2\int_{\mathbb{R}^N} |u_{\mu_n}|^2 \nabla u_{\mu_n} \cdot (\nabla \psi e^{-u_{\mu_n}}-\psi e^{-u_{\mu_n}}\nabla u_{\mu_n})
 +2\int_{\mathbb{R}^N} u_{\mu_n}\psi e^{-u_{\mu_n}} |\nabla u_{\mu_n}|^2 +\lambda_{\mu_n} \int_{\mathbb{R}^N} u_{\mu_n}\psi e^{-u_{\mu_n}}  \\
 & -\int_{\mathbb{R}^N} |u_{\mu_n}|^{p-2}u_{\mu_n}\psi e^{-u_{\mu_n}}  \\
 \leq&\ \mu_n \int_{\mathbb{R}^N} |\nabla u_{\mu_n}|^{\theta-2}\nabla u_{\mu_n} \cdot \nabla \psi e^{-u_{\mu_n}}
 + \int_{\mathbb{R}^N}\left(1+2u_{\mu_n}^2\right)\nabla u_{\mu_n} \cdot \nabla \psi e^{-u_{\mu_n}}   \\
 &-\int_{\mathbb{R}^N}\left(1+2u_{\mu_n}^2-2u_{\mu_n}\right) \psi e^{-u_{\mu_n}}|\nabla u_{\mu_n}|^2   +\lambda_{\mu_n} \int_{\mathbb{R}^N} u_{\mu_n}\psi e^{-u_{\mu_n}}  \\
 &-\int_{\mathbb{R}^N} |u_{\mu_n}|^{p-2}u_{\mu_n}\psi e^{-u_{\mu_n}}.
\end{align*}
By Lemma \ref{lem2.5}(2) and $\mu_n \rightarrow 0^+$, we infer that
\begin{align*}
     \left|\mu_n \int_{\mathbb{R}^N} |\nabla u_{\mu_n}|^{\theta-2}\nabla u_{\mu_n} \cdot \nabla \psi e^{-u_{\mu_n}} \right|
\leq C|\mu_n \|\nabla u_{\mu_n}\|^{\theta-1}_\theta|
=    C|\mu_n \|\nabla u_{\mu_n}\|^{\theta}_\theta|^{\frac{\theta-1}{\theta}} |\mu_n|^{\frac{1}{\theta}}  \rightarrow 0^+.
\end{align*}
Taking into account Claim in (1), the H\"older inequality, the weak convergence and the Lebesgue's dominated convergence theorem, we can deduce that
$$\int_{\mathbb{R}^N}\left(1+2u_{\mu_n}^2\right)\nabla u_{\mu_n} \cdot \nabla \psi e^{-u_{\mu_n}} \rightarrow  \int_{\mathbb{R}^N}(1+2u^2)\nabla u \cdot \nabla \psi e^{-u} ,$$
$$\lambda_{\mu_n} \int_{\mathbb{R}^N} u_{\mu_n}\psi e^{-u_{\mu_n}}  \rightarrow \lambda \int_{\mathbb{R}^N} u\psi e^{-u} ,$$
$$\int_{\mathbb{R}^N} |u_{\mu_n}|^{p-2}u_{\mu_n}\psi e^{-u_{\mu_n}} \rightarrow \int_{\mathbb{R}^N} |u|^{p-2}u\psi e^{-u}.$$
By the Fatou's lemma, we conclude that
$$\liminf_{n\rightarrow \infty} \int_{\mathbb{R}^N}(1+2u_{\mu_n}^2-2u_{\mu_n}) \psi e^{-u_{\mu_n}}|\nabla u_{\mu_n}|^2 \geq \int_{\mathbb{R}^N}(1+2u^2-2u) \psi e^{-u}|\nabla u|^2. $$
As a result,
\begin{align*}
0\leq & \int_{\mathbb{R}^N} \nabla u \cdot (\nabla \psi e^{-u}-\psi e^{-u}\nabla u)  +2\int_{\mathbb{R}^N} |u|^2 \nabla u \cdot (\nabla \psi e^{-u}-\psi e^{-u}\nabla u)  \\
 & +2\int_{\mathbb{R}^N} u\psi e^{-u} |\nabla u|^2 +\lambda \int_{\mathbb{R}^N} u\psi e^{-u}
 -\int_{\mathbb{R}^N} |u|^{p-2}u\psi e^{-u} .
\end{align*}
For any fixed $\phi \in \mathcal{C}^{\infty}_0(\mathbb{R}^N)$ with $\phi \geq0$, we take $\psi_n \in \mathcal{C}^{\infty}_0(\mathbb{R}^N)$ and $\psi_n \geq0$ such that
\begin{align*}
\psi_n \rightarrow \phi e^u \;\mbox{in} \; H^1(\mathbb{R}^N),\;\;  \psi_n \rightarrow \phi e^u \; \mbox{a.e.} \;\mbox{in} \; \mathbb{R}^N.
\end{align*}
Therefore,
\begin{align*}
0\leq & \int_{\mathbb{R}^N} \nabla u \cdot \nabla \phi   +2\int_{\mathbb{R}^N} \left(|u|^2 \nabla u \cdot \nabla \phi
  +u\phi |\nabla u|^2\right) +\lambda \int_{\mathbb{R}^N} u\phi
 -\int_{\mathbb{R}^N} |u|^{p-2}u\phi .
\end{align*}
Similarly by taking $\varphi=\psi e^{u_{\mu_n}}$, we get
\begin{align*}
0\geq & \int_{\mathbb{R}^N} \nabla u \cdot \nabla \phi   +2\int_{\mathbb{R}^N} \left(|u|^2 \nabla u \cdot \nabla \phi
  +u\phi |\nabla u|^2\right) +\lambda \int_{\mathbb{R}^N} u\phi
 -\int_{\mathbb{R}^N} |u|^{p-2}u\phi .
\end{align*}
It is easy to get
\begin{equation}\label{eq4-9}
   \Theta(u,\lambda)=0,
\end{equation}
which implies that $P(u)=0.$

(3) \textbf{Case 1:} $u=0$. If
$$\limsup_{n \rightarrow \infty} \sup_{y \in \mathbb{R}^N} \int_{B_1(y)} |u_{\mu_n}|^2=0,$$
then $\int_{\mathbb{R}^N} |u_{\mu_n}|^p \rightarrow 0$ . Thus it yields from
\begin{align*}
\mu_n(1+\gamma_\theta) \int_{\mathbb{R}^N} |\nabla u_{\mu_n}|^{\theta} +  \int_{\mathbb{R}^N} |\nabla u_{\mu_n}|^2 + (N+2)\int_{\mathbb{R}^N} |u_{\mu_n}|^2 |\nabla u_{\mu_n}|^2
=P_{\mu_n}(u_{\mu_n}) +\gamma_p \int_{\mathbb{R}^N} |u_{\mu_n}|^p \rightarrow 0
\end{align*}
that $I_{\mu_n}(u_{\mu_n}) \rightarrow 0$, in contradiction with $c >0$. Therefore,
$$\limsup_{n \rightarrow \infty} \sup_{y \in \mathbb{R}^N} \int_{B_1(y)} |u_{\mu_n}|^2>0,$$
We may assume, up to a subsequence, that $v_{\mu_n}:=u_{\mu_n}(\cdot+y_n) \rightharpoonup v \neq 0$.
As a result,
\begin{equation}\label{eq3.9'}
\Theta(v,\lambda) = 0.
\end{equation}
It yields that $P(v)=0$ and $I(v)>0$. Moreover, denote $v_{1n}=v_{\mu_n}-v$,
if
$$\limsup_{n \rightarrow \infty} \sup_{y \in \mathbb{R}^N} \int_{B_1(y)} |v_{1n}|^2>0,$$
We may assume, up to a subsequence, that $v_{1n}(\cdot+y_{1n}) \rightharpoonup v_1 \neq 0$.
As a result,
\begin{equation}\label{eq3.9.1}
\Theta(v_1,\lambda) = 0.
\end{equation}
It yields that $P(v_1)=0$ and $I(v_1)>0$. Moreover, denote $v_{2n}=v_{1n}-v_1(\cdot-y_{1n})$, then, similar to \cite[Corollary A.2]{GG2024}, we can prove
\begin{equation*}
I_{\mu_n}(v_{\mu_n})=\frac{\mu_n}{\theta} \int_{\mathbb{R}^N} |\nabla v_{\mu_n}|^{\theta}+I(v)+I(v_{1})+I(v_{2n})+o_n(1),
\end{equation*}
and
\begin{equation*}
P_{\mu_n}(v_{\mu_n})=\mu_n(1+\gamma_\theta) \int_{\mathbb{R}^N} |\nabla v_{\mu_n}|^{\theta}+P(v)+P(v_{1})+P(v_{2n})+o_n(1).
\end{equation*}
It follows from  $P(v_j)=0$ and
\begin{equation*}
C\geq \int_{\mathbb{R}^N} |v_{\mu_n}|^2 |\nabla v_{\mu_n}|^2=\int_{\mathbb{R}^N} |v|^2 |\nabla v|^2+\int_{\mathbb{R}^N} |v_1|^2 |\nabla v_1|^2+\int_{\mathbb{R}^N} |v_{2n}|^2 |\nabla v_{2n}|^2+o_n(1)
\end{equation*}
that
\begin{align*}
(N+2)\int_{\mathbb{R}^N} |v_j|^2 |\nabla v_j|^2 \leq  \gamma_p \int_{\mathbb{R}^N} |v_j|^p \leq  C_p\left (\int_{\mathbb{R}^N} |v_j|^2\right)^{\frac{4N-p(N-2)}{2(N+2)}}\left(4\int_{\mathbb{R}^N} |v_j|^2 |\nabla v_j|^2\right)^{\frac{N(p-2)}{2(N+2)}},
\end{align*}
which implies
$$\int_{\mathbb{R}^N} |v_j|^2 \geq C_{p,N}.$$
At the same time, since
\begin{equation*}
\int_{\mathbb{R}^N} |v_{\mu_n}|^2=\int_{\mathbb{R}^N} |v|^2 +\int_{\mathbb{R}^N} |v_1|^2 +\int_{\mathbb{R}^N} |v_{2n}|^2 +o_n(1),
\end{equation*}
we get this procedure will be end after finite times. We obtain
the existence of $v_j$ with $\|v_j\|_2
=a_j>0$ and
\begin{equation*}
I_{\mu_n}(v_{\mu_n})=\frac{\mu_n}{\theta} \int_{\mathbb{R}^N} |\nabla v_{\mu_n}|^{\theta}+I(v)+\sum_{j=1}^k I(v_{j})+o_n(1),
\end{equation*}
and
\begin{equation*}
P_{\mu_n}(v_{\mu_n})=\mu_n(1+\gamma_\theta) \int_{\mathbb{R}^N} |\nabla v_{\mu_n}|^{\theta}+P(v)+\sum_{j=1}^k P(v_{j})+o_n(1).
\end{equation*}
Since $P(v_j)=0$, we get $\mu_n \int_{\mathbb{R}^N} |\nabla v_{\mu_n}|^{\theta}=o_n(1)$, which yields
\begin{align*}
&\ m(a) + \varepsilon   \\
\geq &\ m_{\mu_n}(a)=I_{\mu_n}(v_{\mu_n})
  \\
  = &\ I(v)+\sum_{j=1}^k I(v_{j})+o_n(1) \\
  \geq &\ I(v)+I(v_{1}) +o_n(1) \\
  \geq&\ I(v)+m(\|v_1\|_2),
\end{align*}
this is impossible.

Thus
$$\limsup_{n \rightarrow \infty} \sup_{y \in \mathbb{R}^N} \int_{B_1(y)} |v_{1n}|^2=0,$$
then $\int_{\mathbb{R}^N}|v_{1n}|^p \rightarrow 0$. As a result,
$$ P_{\mu_n}(v_{\mu_n}) +\gamma_p \int_{\mathbb{R}^N} |v_{\mu_n}|^p  \rightarrow P(v) +\gamma_p \int_{\mathbb{R}^N} |v|^p.$$
Taking into account the weak lower semicontinuous property \cite{CJS2010}, we have that
\begin{equation}\label{eq3.10'}
\mu_n \int_{\mathbb{R}^N} |\nabla v_{\mu_n}|^{\theta} \rightarrow 0,
\end{equation}
\begin{equation}\label{eq3.11'}
 \int_{\mathbb{R}^N} |\nabla v_{\mu_n}|^2 \rightarrow \int_{\mathbb{R}^N} |\nabla v|^2,
\end{equation}
\begin{equation}\label{eq3.12'}
 \int_{\mathbb{R}^N} |v_{\mu_n}|^2 |\nabla v_{\mu_n}|^2 \rightarrow \int_{\mathbb{R}^N} |v|^2 |\nabla v|^2.
\end{equation}
Therefore,
$$ I(v)=\lim_{n\rightarrow \infty} I_{\mu_n}(v_{\mu_n})=c.$$
Consequently,
$$0<\|v\|_2 \leq \liminf_{n\rightarrow \infty} \|v_{\mu_n}\|_2 = a.$$
It follows from \eqref{eq3.10'}$-$\eqref{eq3.12'} that
\begin{equation}\label{eq4.13}
    I'_{\mu_n}(v_{\mu_n})[v_{\mu_n}] \rightarrow \int_{\mathbb{R}^N} |\nabla v|^2  +4\int_{\mathbb{R}^N} |v|^2 |\nabla v|^2
 -\int_{\mathbb{R}^N} |v|^p.
\end{equation}
 And it is easy to see that
\begin{equation}\label{eq4-4'}
I'_{\mu_n}(v_{\mu_n}) +\lambda v_{\mu_n} \rightarrow 0 \;\; \mbox{in} \; \Upsilon^*.
\end{equation}
Thus combining \eqref{eq3.9'} with \eqref{eq4.13} and \eqref{eq4-4'}, we obtain
$\lambda \|v_{\mu_n}\|^2_2 \rightarrow \lambda \|v\|^2_2$.
If $\lambda \neq0$, then
$$\|v\|_2=a.$$
Without loss of generality, we can write $v$ as $u$, thus we complete the proof.

\textbf{Case 2:} $u \neq 0$.
By the similar approach to $v_{\mu_n} \rightharpoonup v \neq 0$, we can complete the proof.
\end{proof}

\begin{remark}
    Since $\mu_n \rightarrow 0^+$,
    the method of getting compactness in Lemma \ref{lem3.5'} is invalid. Thus, we have to apply a different one to deal with it.
\end{remark}

\begin{proof} [Proof of Theorem \ref{Thm1.1'}]
By Remark \ref{rem02.5}, we can conclude that for any $0<\mu_1<\mu_2 \leq1$,
\begin{align*}
    m_{\mu_1}(a)=& \inf_{u \in \mathcal{P}_{\mu_1}(a)}I_{\mu_1}(u)   \\
    =& \inf_{u \in \mathcal{S}(a)}\max_{s \in \mathbb{R}}I_{\mu_1}(s*u)   \\
    \leq& \inf_{u \in \mathcal{S}(a)}\max_{s \in \mathbb{R}}I_{\mu_2}(s*u)   \\
    =& \inf_{u \in \mathcal{P}_{\mu_2}(a)}I_{\mu_2}(u)= m_{\mu_2}(a).
\end{align*}
It follows that $m_\mu(a)$ is nondecreasing with respect to $\mu \in (0,1]$. From Lemma \ref{lem2.5}, we infer that
$$b(a):=\lim_{\mu \rightarrow 0^+} m_\mu(a) \geq \frac{\zeta}{8}>0.$$
It follows from Lemma \ref{lem3.5'} that there exists a sequence $\{\mu_n\}$ such that
$$ I'_{\mu_n}(u_{\mu_n})+\lambda_{\mu_n}u_{\mu_n}=0, \;\; I_{\mu_n}(u_{\mu_n})=m_{\mu_n}(a) \rightarrow b(a), \;\mbox{as} \; \mu_n \rightarrow 0^+,$$
where $u_{\mu_n} \in \mathcal{S}(a)$. At the same time, Lemma \ref{lem4.1'} implies that there exists a $v \in H^1(\mathbb{R}^N)\cap L^{\infty}(\mathbb{R}^N)$ and a $\lambda_0 \in \mathbb{R}$ such that
$$\Theta(v,\lambda_0)=0, \;\; I(v)=b(a),\;\; 0<\|v\|_2\leq a.$$
So, by Lemma \ref{lambda}, we get $\lambda_0>0$, which means $\|v\|_2=a$. Obviously, $v$ is a nontrivial solution of \eqref{eq01} and \eqref{eq02}.

Recall
$$\sigma(a)=\inf_{u \in \mathcal{S}'(a), \Theta(u,\lambda)=0} I(u).$$
Obviously, $\sigma(a) \leq I(v)=b(a)$. Further, using similar approach to Lemma \ref{lem2.5}(1), we conclude that $\sigma(a)>0$.

We take a sequence $v_n \neq0$ such that
$$ v_n \in \mathcal{S}'(a),\;  \Theta(v_n,\lambda_n)=0,\;  I(v_n) \rightarrow \sigma(a).$$
By the similar approach to Lemma \ref{lem4.1'}, we conclude that
there exists a $u \neq0,\; u \in H^1(\mathbb{R}^N)\cap L^{\infty}(\mathbb{R}^N)$ and a $\lambda \in \mathbb{R}$ such that
$$\Theta(u,\lambda)=0, \;\; I(u)=\sigma(a).$$
It follows from Lemma \ref{lambda} that $\lambda >0$. As a result, $\|u\|_2=a$.
\end{proof}

\section{The case of radial space }\label{radial space}

In this section, we assume $N=3,4$ and $p \in \left(4+\frac{4}{N}, 2\cdot2^*\right)$, we study $\Psi_\mu$ on the radial space $\mathbb{R} \times \mathcal{S}_r(a)$ with
$$\mathcal{S}_r(a):=\mathcal{S}(a)\cap \Upsilon_{r}.$$
It is easy to see that $\Psi_\mu$ is of class $\mathcal{C}^1$. By the symmetric critical point principle \cite{P1979}, a Palais-Smale sequence for $\Psi_\mu|_{\mathbb{R} \times \mathcal{S}_r(a)}$ is also a Palais-Smale sequence for $\Psi_\mu|_{\mathbb{R} \times \mathcal{S}(a)}$.

Let $Y \subset \Upsilon$. A set $A \subset Y$ is called $\tau$-invariant if $\tau(A)=A$. A homotopy $\eta:[0,1]\times Y \mapsto Y$ is $\tau$-equivariant if $\eta(t,\tau(u))=\tau(\eta(t,u))$ for any $(t,u) \in [0,1]\times Y$.

\begin{definition}\label{def5.1}
\cite[Definition 7.1]{G1993} Let B be a closed $\tau$-invariant subset of Y. We say that a class $\mathcal{G}$ of compact subsets of Y is a $\tau$-homotopy stable family with boundary B provided

(a) every set in $\mathcal{G}$ is $\tau$-invariant;

(b) every set in $\mathcal{G}$ contains B;

(c) for any set A in $\mathcal{G}$ and any $\tau$-equivariant homoyopy $\eta \in \mathcal{C}([0,1]\times Y,Y)$ satisfying $\eta(t,x)=x$ for all $(t,x)$ in $(\{0\}\times Y)\cup ([0,1]\times B) $ we have that $\eta(1,A) \in \mathcal{G}.$
\end{definition}

 Taking $\tau(u)=-u$, then $\Psi_\mu(u)$ is $\tau$-invariant. Similar to Lemma \ref{lem3.4.1} and by \cite[Theorem 7.2]{G1993}, we have the next lemma.

\begin{lemma}\label{lem5.3}
Let $\mathcal{G}$ is a $\tau$-homotopy stable family of compact subsets of $\mathcal{S}_r(a)$ with boundary $B= \emptyset$, and set
$$E_\mu^\mathcal{G}(a):=\inf_{A \in \mathcal{G}} \max_{u \in A} \Psi_\mu(u).$$
If $E_\mu^\mathcal{G}(a)>0$, then there exists a sequence $u_n \in \mathcal{S}_r(a)$ such that
$$I_\mu(u_n) \rightarrow E_\mu^\mathcal{G}(a),\; I_\mu|'_{\mathcal{S}(a)}(u_n) \rightarrow 0,\; P_\mu(u_n)=0.$$
\end{lemma}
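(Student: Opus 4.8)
The plan is to carry over, essentially word for word, the argument of Lemma~\ref{lem3.4.1}, with two modifications: the minimax principle of Ghoussoub (Proposition~\ref{prop3.2}) is replaced by its $\tau$-equivariant version \cite[Theorem~7.2]{G1993}, and the Palais--Smale sequence it produces, which a priori lives on the symmetric constraint $\mathcal{S}_r(a)$, is transferred to the full constraint $\mathcal{S}(a)$ via the symmetric criticality principle \cite{P1979} recalled at the beginning of this section. Throughout we exploit that the $L^2$-preserving dilation $s*u$ maps radial functions to radial functions, that $\Psi_\mu$ is $\mathcal{C}^1$ on $\mathcal{S}_r(a)$, and that $s(-u)=s(u)$ by Lemma~\ref{lem2.4}(3).

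First I would fix a minimizing sequence $\{A_n\}\subset\mathcal{G}$ for $E_\mu^{\mathcal{G}}(a)$ and define $\eta:[0,1]\times\mathcal{S}_r(a)\to\mathcal{S}_r(a)$ by $\eta(t,u)=(t\,s(u))*u$. This map is well defined (every element of $\mathcal{S}_r(a)$ is nonzero since $a>0$) and radial-valued; it is continuous by Lemma~\ref{lem2.4}(2); it satisfies $\eta(0,\cdot)=\mathrm{id}$ and, since $B=\emptyset$, the boundary condition in Definition~\ref{def5.1}(c) is vacuous; and it is $\tau$-equivariant for $\tau(u)=-u$, because $\eta(t,-u)=(t\,s(-u))*(-u)=-(t\,s(u))*u=-\eta(t,u)$. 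Hence $D_n:=\eta(1,A_n)=\{s(u)*u\mid u\in A_n\}$ is $\tau$-invariant, so $D_n\in\mathcal{G}$ by Definition~\ref{def5.1}(c), and $D_n\subset\mathcal{P}_\mu(a)\cap\mathcal{S}_r(a)$ by Lemma~\ref{lem2.4}(1). Since every $u\in D_n$ has $s(u)=0$, we get $\Psi_\mu(u)=I_\mu(u)$ on $D_n$, and for $u\in A_n$ we have $\Psi_\mu(s(u)*u)=\Psi_\mu(u)$; therefore $\max_{D_n}\Psi_\mu=\max_{A_n}\Psi_\mu\to E_\mu^{\mathcal{G}}(a)$, so $\{D_n\}$ is again a minimizing sequence for $E_\mu^{\mathcal{G}}(a)$.

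Next I would apply \cite[Theorem~7.2]{G1993} to $\Psi_\mu$ on $\mathcal{S}_r(a)$ (the inequality $\sup\Psi_\mu(\emptyset)=-\infty<E_\mu^{\mathcal{G}}(a)$ holding trivially) to obtain $\{v_n\}\subset\mathcal{S}_r(a)$ with $\Psi_\mu(v_n)\to E_\mu^{\mathcal{G}}(a)$, $\|\Psi'_\mu(v_n)\|_{v_n,*}\to0$ and $\mathrm{dist}(v_n,D_n)\to0$; by the symmetric criticality principle \cite{P1979} this $\{v_n\}$ is also a Palais--Smale sequence for $\Psi_\mu|_{\mathcal{S}(a)}$. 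Set $s_n:=s(v_n)$ and $u_n:=s_n*v_n\in\mathcal{P}_\mu(a)\cap\mathcal{S}_r(a)$, so that $P_\mu(u_n)=0$. To bound $\{-s_n\}$ from above, observe that $\max_{D_n}I_\mu=\max_{D_n}\Psi_\mu$ is bounded and $D_n\subset\mathcal{P}_\mu(a)$, so Lemma~\ref{lem2.5}(2) makes $\{D_n\}$ uniformly bounded in $\Upsilon$, and $\mathrm{dist}(v_n,D_n)\to0$ then gives $\sup_n\int_{\mathbb{R}^N}|v_n|^2|\nabla v_n|^2<\infty$; on the other hand $u_n\in\mathcal{P}_\mu(a)$ together with \eqref{GNI} bounds $\int_{\mathbb{R}^N}|u_n|^2|\nabla u_n|^2=e^{(N+2)s_n}\int_{\mathbb{R}^N}|v_n|^2|\nabla v_n|^2$ below by a positive constant, whence $-s_n\le C$. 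Finally, $u_n\in\mathcal{P}_\mu(a)$ gives $I_\mu(u_n)=I_\mu(s_n*v_n)=\Psi_\mu(v_n)\to E_\mu^{\mathcal{G}}(a)$, and Lemma~\ref{lem5.2} yields
$$\|I'_\mu|_{\mathcal{S}(a)}(u_n)\|_{u_n,*}\le C\,\|\Psi'_\mu(v_n)\|_{v_n,*}\to0,$$
exactly as in Lemma~\ref{lem3.4.1}, producing the desired radial sequence.

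The analytic core of the argument (the lower bound for $\int_{\mathbb{R}^N}|u_n|^2|\nabla u_n|^2$, the compactness-free treatment of $I_\mu$ through $\Psi_\mu$, and the differentiation formula of Lemma~\ref{lem5.2}) was already established in Sections~2 and~4, so no new difficulty arises there. The only genuinely new points to verify are that the deformation $\eta$ is $\tau$-equivariant and preserves the radial constraint — so that $D_n$ remains admissible in the family $\mathcal{G}$ — and that Palais' symmetric criticality principle legitimately upgrades a constrained Palais--Smale sequence for $\Psi_\mu|_{\mathcal{S}_r(a)}$ to one for $\Psi_\mu|_{\mathcal{S}(a)}$. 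These are routine bookkeeping, so I do not expect a serious obstacle in this lemma.
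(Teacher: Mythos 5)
Your proposal is correct and follows exactly the route the paper intends: the paper's own ``proof'' of this lemma is the single remark that it is ``similar to Lemma \ref{lem3.4.1} and by \cite[Theorem 7.2]{G1993}'', and your write-up simply fills in those details (the $\tau$-equivariance of $\eta$ via $s(-u)=s(u)$, preservation of radiality, and the transfer to $\mathcal{S}(a)$ by symmetric criticality). No discrepancy with the paper's approach.
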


We recall the definition of the genus of $\tau$-invariant sets due to M. A. Krasnoselskii and refer the readers to \cite{R1986}.
\begin{definition}
For any nonempty closed $\tau$-invariant set $A \subset \Upsilon$, the genus of A is defined by
\begin{align*}
    {\rm gen}(A):=\min\{k \in \mathbb{N}^+\;| \;\exists\, \phi:A \mapsto \mathbb{R}^k \backslash \{0\}, \phi \;is \; odd \; and \; continuous\}.
\end{align*}
\end{definition}
We take
\begin{align*}
  \begin{split}
     \mbox{gen}(A)=\left \{
        \begin{array}{ll}
            +\infty,    &\mbox{finite} \; k \; \mbox{does} \; \mbox{not}\; \mbox{exist}, \\
            0,          &A =\emptyset.
        \end{array}
     \right.
  \end{split}
\end{align*}
Let $\mathcal{A}(a)$ be the family of compact $\tau$-invariant subsets of $\mathcal{S}_r(a)$. For any $j \in \mathbb{N}^+$, we define
$$\mathcal{A}_j(a):=\{A \in \mathcal{A}(a)\;|\; \mbox{gen}(A) \geq j\}$$
and
$$c^j_\mu(a):=\inf_{A \in \mathcal{A}_j(a)} \max_{u \in A} \Psi_\mu(u).$$
By a similar approach to  \cite[Lemma 3.13]{LZ2023}, we obtain the next lemma.
\begin{lemma}\label{lem5.5}
(1) $\mathcal{A}_j(a) \neq \emptyset$ for any $j\in \mathbb{N}^+$, and $\mathcal{A}_j(a)$ is a $\tau$-homotopy stable family of compact subsets of $\mathcal{S}_r(a)$ with boundary $B = \emptyset$;

(2) $c^{j+1}_\mu(a) \geq c^j_\mu(a) \geq \frac{\zeta}{8} >0$ for any $\mu \in (0,1]$ and $j\in \mathbb{N}^+$;

(3) $c^j_\mu(a)$ is nondecreasing with respect to $\mu \in (0,1]$ for any $j\in \mathbb{N}^+$;

(4) $\tilde{b}^j(a):= \inf_{0<\mu \leq 1} c^j_\mu(a) \rightarrow +\infty$ as $j \rightarrow +\infty$.
\end{lemma}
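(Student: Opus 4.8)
Parts (1)--(3) are routine and I would dispose of them quickly. For (1), $\mathcal{A}_j(a)\neq\emptyset$ because any $j$-dimensional subspace $V\subset\Upsilon_r$ (for instance the span of $j$ radial functions in $C_c^\infty(\mathbb{R}^N)$) meets $\mathcal{S}_r(a)$ in a set homeomorphic to $S^{j-1}$, which is compact, $\tau$-invariant and of genus $j$; that $\mathcal{A}_j(a)$ is a $\tau$-homotopy stable family with $B=\emptyset$ is immediate from Definition \ref{def5.1} once one observes that if $\eta$ is a $\tau$-equivariant homotopy with $\eta(0,\cdot)=\mathrm{id}$, then $\eta(1,\cdot)|_A\colon A\to\eta(1,A)$ is odd and continuous, so composing any odd map on $\eta(1,A)$ with it gives $\mathrm{gen}(\eta(1,A))\ge\mathrm{gen}(A)\ge j$. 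For (2), $\mathcal{A}_{j+1}(a)\subset\mathcal{A}_j(a)$ gives $c^{j+1}_\mu(a)\ge c^j_\mu(a)$, while for any $u$ in any (nonempty) $A\in\mathcal{A}_j(a)$ one has $\Psi_\mu(u)=I_\mu(s(u)*u)\ge m_\mu(a)\ge\zeta/8$ by Lemma \ref{lem2.4}(1) and Lemma \ref{lem2.5}(1). For (3), $I_{\mu_1}(s*u)\le I_{\mu_2}(s*u)$ for every $s$ when $0<\mu_1\le\mu_2\le1$ since the only $\mu$-dependent term $\tfrac\mu\theta\int|\nabla u|^\theta$ is nonnegative, hence $\Psi_{\mu_1}(u)\le\Psi_{\mu_2}(u)$ and therefore $c^j_{\mu_1}(a)\le c^j_{\mu_2}(a)$.

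The real content is (4), and the plan is to show that for every $R>0$ the sublevel set $\{u\in\mathcal{S}_r(a):\Psi_\mu(u)\le R\}$ has genus bounded by a number $g(R)$ that is independent of $\mu$: then whenever $j>g(R)$ every $A\in\mathcal{A}_j(a)$ satisfies $\max_A\Psi_\mu>R$, so $c^j_\mu(a)\ge R$ for all $\mu\in(0,1]$, hence $\tilde b^j(a)\ge R$, and letting $R\to\infty$ finishes the proof. To estimate this genus I would first note that, by Lemma \ref{lem2.4}, the map $f_\mu(u):=s(u)*u$ is an odd continuous retraction of $\{\Psi_\mu\le R\}$ onto $\{v\in\mathcal{P}^r_\mu(a):I_\mu(v)\le R\}$, where $\mathcal{P}^r_\mu(a):=\mathcal{P}_\mu(a)\cap\Upsilon_r$ (here $I_\mu(f_\mu(u))=\Psi_\mu(u)$ and $s(v)=0$ for $v\in\mathcal{P}^r_\mu(a)$); composing an odd map on the target with $f_\mu$ gives $\mathrm{gen}(\{\Psi_\mu\le R\})\le\mathrm{gen}(\{v\in\mathcal{P}^r_\mu(a):I_\mu(v)\le R\})$. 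On $\mathcal{P}^r_\mu(a)$, eliminating $\gamma_p\int|v|^p$ from $I_\mu(v)$ by means of $P_\mu(v)=0$ yields
$$I_\mu(v)=\mu\Big(\tfrac1\theta-\tfrac{1+\gamma_\theta}{p\gamma_p}\Big)\!\int_{\mathbb{R}^N}\!|\nabla v|^{\theta}+\Big(\tfrac12-\tfrac1{p\gamma_p}\Big)\!\int_{\mathbb{R}^N}\!|\nabla v|^{2}+\Big(1-\tfrac{N+2}{p\gamma_p}\Big)\!\int_{\mathbb{R}^N}\!|v|^2|\nabla v|^2,$$
and since $\theta(1+\gamma_\theta)<N+2<p\gamma_p$ (the first inequality holds on the admissible range of $\theta$, the second because $p>4+\tfrac4N$) all three coefficients are strictly positive, the latter two being independent of $\mu$.

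Consequently $\{v\in\mathcal{P}^r_\mu(a):I_\mu(v)\le R\}$ is contained in $\{v\in\Upsilon_r:\|v\|_2=a,\ \|\nabla v\|_2^2\le C_1R,\ \int|v|^2|\nabla v|^2\le C_2R\}$ with $C_1,C_2$ independent of $\mu$; this set is bounded in $H^1_r(\mathbb{R}^N)$ and, because $\|v\|_{2\cdot2^*}^2=\|v^2\|_{2^*}\le C\|\nabla(v^2)\|_2\le C'(\int|v|^2|\nabla v|^2)^{1/2}$, also bounded in $L^{2\cdot2^*}(\mathbb{R}^N)$, so by the compact radial embedding $H^1_r\hookrightarrow L^q$ for $q\in(2,2^*)$ together with interpolation it is precompact in $L^p(\mathbb{R}^N)$ for the given $p<2\cdot2^*$. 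Moreover, using $P_\mu(v)=0$ to bound $\|\nabla v\|_2^2$, $\mu\int|\nabla v|^\theta$ and $\int|v|^2|\nabla v|^2$ by $\gamma_p\|v\|_p^p$ and then the coercivity identity, one gets $I_\mu(v)\le C_*\|v\|_p^p$ with $C_*$ independent of $\mu$, so Lemma \ref{lem2.5}(1) forces $\|v\|_p\ge c_0>0$ on $\mathcal{P}_\mu(a)$ uniformly in $\mu$. Hence the $L^p$-closure of $\{v\in\mathcal{P}^r_\mu(a):I_\mu(v)\le R\}$ is a compact, symmetric subset of $L^p(\mathbb{R}^N)$ avoiding $0$, so it has finite genus $g(R)$ independent of $\mu$, and since the $\Upsilon$-topology is finer than the $L^p$-topology one gets $\mathrm{gen}(\{v\in\mathcal{P}^r_\mu(a):I_\mu(v)\le R\})\le g(R)$; combining with the retraction estimate above yields $\mathrm{gen}(\{u\in\mathcal{S}_r(a):\Psi_\mu(u)\le R\})\le g(R)$, as needed. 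The delicate points are precisely the $\mu$-uniformity — of the coercivity (which is why one must pass through $\mathcal{P}^r_\mu(a)$, where the $\mu$-term carries a nonnegative coefficient and can simply be dropped), and of the precompactness and of the lower bound $\|v\|_p\ge c_0$; everything else — the genus monotonicity and intersection properties, the retraction $f_\mu$, and the behaviour of $\Psi_\mu$ along the dilations $s*u$ — is standard, essentially as in \cite[Lemma 3.13]{LZ2023}.
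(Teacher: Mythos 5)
The paper does not actually prove Lemma \ref{lem5.5}: it only says the result follows ``by a similar approach to [LZ2023, Lemma 3.13]''. Your proposal supplies a complete, self-contained argument, and I find it correct. Parts (1)--(3) are handled exactly as one would expect (finite-dimensional spheres for nonemptiness, monotonicity of the genus under odd continuous maps for stability and for $c^{j+1}_\mu\geq c^j_\mu$, the pointwise monotonicity of $I_\mu$ in $\mu$ for (3), and $\Psi_\mu\geq m_\mu(a)\geq\zeta/8$ via Lemmas \ref{lem2.4}(1) and \ref{lem2.5}(1) for the lower bound). For (4), your genus-of-sublevel-sets argument is the standard route and all the key verifications check out: the retraction $u\mapsto s(u)*u$ is odd and continuous by Lemma \ref{lem2.4}(2)--(3) and fixes $\mathcal{P}^r_\mu(a)$ since $s(v)=0$ there; the exact elimination of $\gamma_p\int|v|^p$ via $P_\mu(v)=0$ gives strictly positive, $\mu$-independent coefficients on $\int|\nabla v|^2$ and $\int|v|^2|\nabla v|^2$ because $\theta(1+\gamma_\theta)<N+2<p\gamma_p$ on the stated ranges of $\theta$ and $p$ (this is in fact sharper than the paper's own Lemma \ref{lem2.5}(2), which uses the cruder bound $\tfrac1p\int|u|^p<\tfrac{1}{N+2}\gamma_p\int|u|^p$, annihilates the coefficient of the quasilinear term, and then needs a vanishing dichotomy to recover it); and the uniform lower bound $\|v\|_p\geq c_0$ follows from $I_\mu\leq C_*\|v\|_p^p$ on $\mathcal{P}_\mu(a)$ together with $m_\mu(a)\geq\zeta/8$. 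The only point I would make explicit is the $\mu$-uniformity of $g(R)$ at the very end: the genus of the $L^p$-closure of $\{v\in\mathcal{P}^r_\mu(a):I_\mu(v)\leq R\}$ is a priori a $\mu$-dependent quantity, so you should define $g(R)$ as the (finite) genus of the single $\mu$-independent compact symmetric set $\overline{K_R}^{L^p}\cap\{\|v\|_p\geq c_0\}$, where $K_R$ is your container, and then invoke monotonicity of the genus under inclusion; with that phrasing the conclusion $\mathrm{gen}\left(\{u\in\mathcal{S}_r(a):\Psi_\mu(u)\leq R\}\right)\leq g(R)$ for all $\mu\in(0,1]$ is fully justified and the divergence of $\tilde b^j(a)$ follows as you state.
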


Now, we can get the critical point of $I_\mu|_{\mathcal{S}(a)}$.

\begin{lemma}\label{lem5.6}
For any fixed $\mu \in (0,1]$ and any $j\in \mathbb{N}^+$, there exist a  $u^j_\mu \in \Upsilon_r \backslash \{0\}$ and a $\lambda^j_\mu \in \mathbb{R}$ such that

(1) $I'_\mu(u^j_\mu) +\lambda^j_\mu u^j_\mu=0;$

(2) $P_\mu(u^j_\mu)=0;$

(3) $I_\mu(u^j_\mu)=c^j_\mu(a);$

(4) $0<\|u^j_\mu\|_2 \leq a;$ furthermore, if $\lambda^j_\mu \neq 0$, then $\|u^j_\mu\|_2 = a.$
\end{lemma}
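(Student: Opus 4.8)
The plan is to follow the scheme of Lemma~\ref{lem3.5'}, but with the minimax class replaced by the genus family $\mathcal{A}_j(a)$ and with the extra compactness afforded by radial symmetry. Fix $\mu\in(0,1]$ and $j\in\mathbb{N}^+$. By Lemma~\ref{lem5.5}(1), $\mathcal{A}_j(a)$ is a $\tau$-homotopy stable family of compact subsets of $\mathcal{S}_r(a)$ with $B=\emptyset$, and $E_\mu^{\mathcal{A}_j(a)}(a)=c^j_\mu(a)\ge\frac{\zeta}{8}>0$ by Lemma~\ref{lem5.5}(2); hence Lemma~\ref{lem5.3} furnishes a sequence $\{u_n\}\subset\mathcal{S}_r(a)$ with $I_\mu(u_n)\to c^j_\mu(a)$, $I_\mu|'_{\mathcal{S}(a)}(u_n)\to0$ and $P_\mu(u_n)=0$. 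Lemma~\ref{lem2.5}(2) then gives that $\{u_n\}$ is bounded in $\Upsilon_r$, so, up to a subsequence, $u_n\rightharpoonup u^j_\mu$ in $\Upsilon_r$, $u_n\nabla u_n\rightharpoonup u^j_\mu\nabla u^j_\mu$ in $(L^2(\mathbb{R}^N))^N$ and $u_n\to u^j_\mu$ a.e.\ in $\mathbb{R}^N$. The Lagrange multiplier rule (as in the use of \cite[Lemma~3]{BL1983} in Lemma~\ref{lem3.5'}) produces $\lambda_n:=-\frac{1}{a^2}I'_\mu(u_n)[u_n]$, which is bounded, and, after a further subsequence, $\lambda_n\to\lambda^j_\mu$ and $I'_\mu(u_n)+\lambda^j_\mu u_n\to0$ in $\Upsilon^*$.

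Next I would use radiality to upgrade to strong convergence in $L^p$. Since $\int_{\mathbb{R}^N}|u_n|^2|\nabla u_n|^2\le C$ and $\nabla(u_n^2)=2u_n\nabla u_n$, the sequence $\{u_n^2\}$ is bounded in $H^1_r(\mathbb{R}^N)$; combining the compact embedding $H^1_r(\mathbb{R}^N)\hookrightarrow L^q(\mathbb{R}^N)$ for $q\in(2,2^*)$ applied to $u_n$ and to $u_n^2$, together with an interpolation, yields $u_n\to u^j_\mu$ in $L^r(\mathbb{R}^N)$ for all $r\in(2,2\cdot2^*)$, so in particular $\int_{\mathbb{R}^N}|u_n|^p\to\int_{\mathbb{R}^N}|u^j_\mu|^p$ because $p\in(4+\frac4N,2\cdot2^*)$. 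If $u^j_\mu=0$ then $\int_{\mathbb{R}^N}|u_n|^p\to0$, and $P_\mu(u_n)=0$ would force $\mu(1+\gamma_\theta)\int_{\mathbb{R}^N}|\nabla u_n|^\theta+\int_{\mathbb{R}^N}|\nabla u_n|^2+(N+2)\int_{\mathbb{R}^N}|u_n|^2|\nabla u_n|^2\to0$, hence $I_\mu(u_n)\to0$, contradicting $c^j_\mu(a)>0$; therefore $u^j_\mu\neq0$.

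The technical heart is to show that $u^j_\mu$ solves the limiting equation. First, the Moser iteration of Lemma~\ref{lem4.1'}(1), whose computation goes through for $p<2\cdot2^*$ (it uses only the bounds from Lemma~\ref{lem2.5}(2), the boundedness of $\lambda_n$, and $u_n\in L^{2\cdot2^*}$, which holds since $u_n^2\in H^1(\mathbb{R}^N)$), yields a uniform bound $\|u_n\|_\infty\le C$ and $\|u^j_\mu\|_\infty\le C$. Then I would pass to the limit in $I'_\mu(u_n)+\lambda^j_\mu u_n\to0$; the delicate terms $\int|u_n|^2\nabla u_n\cdot\nabla\varphi+u_n\varphi|\nabla u_n|^2$ and $\int|\nabla u_n|^{\theta-2}\nabla u_n\cdot\nabla\varphi$ are handled by first upgrading $\{u_n\}$ to a.e.\ convergence of the gradients (a Boccardo--Murat/$(S_+)$-type argument: test the approximate equation against $(u_n-u^j_\mu)$ multiplied by smooth cutoffs and exploit the monotonicity of $\xi\mapsto|\xi|^{\theta-2}\xi$ and of the $u^2$-nonlinearity), after which a.e.\ convergence together with the $L^\infty$-bound and the weak limits identifies all limits; alternatively the $e^{\pm u}$ device of Lemma~\ref{lem4.1'}(2) applies once the gradient convergence is available. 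This gives $I'_\mu(u^j_\mu)+\lambda^j_\mu u^j_\mu=0$, which is (1); elliptic regularity \cite[Lemma~2.6]{LLW2013-2} then gives $u^j_\mu\in C^{2,\alpha}$, so $u^j_\mu$ is a critical point of $I_\mu$ on $\mathcal{S}(\|u^j_\mu\|_2)$ and hence satisfies $P_\mu(u^j_\mu)=0$, which is (2); moreover $u^j_\mu\in\Upsilon_r\backslash\{0\}$ with $\|u^j_\mu\|_2\le\liminf_n\|u_n\|_2=a$, establishing $0<\|u^j_\mu\|_2\le a$.

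Finally, with $P_\mu(u_n)=0$, $P_\mu(u^j_\mu)=0$ and $\int_{\mathbb{R}^N}|u_n|^p\to\int_{\mathbb{R}^N}|u^j_\mu|^p$, the identity $\mu(1+\gamma_\theta)\int|\nabla u_n|^\theta+\int|\nabla u_n|^2+(N+2)\int|u_n|^2|\nabla u_n|^2=\gamma_p\int|u_n|^p$ shows that the sum of the three weakly lower semicontinuous quantities on the left (for the quasilinear one use the weak lower semicontinuity in \cite{CJS2010}) converges to the corresponding sum for $u^j_\mu$; since each term is at least its own weak $\liminf$, each converges separately, so $I_\mu(u_n)\to I_\mu(u^j_\mu)$ and (3) follows. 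For the last claim of (4), feeding these strong convergences into $(I'_\mu(u_n)+\lambda_n u_n)[u_n]=o_n(1)$ gives $I'_\mu(u^j_\mu)[u^j_\mu]+\lambda^j_\mu a^2=0$, while testing the equation with $u^j_\mu$ gives $I'_\mu(u^j_\mu)[u^j_\mu]+\lambda^j_\mu\|u^j_\mu\|_2^2=0$; subtracting yields $\lambda^j_\mu(a^2-\|u^j_\mu\|_2^2)=0$, so $\lambda^j_\mu\neq0$ forces $\|u^j_\mu\|_2=a$. I expect the main obstacle to be the passage to the limit in the quasilinear term, i.e.\ securing a.e.\ convergence of the gradients; a secondary structural point is that, since $L^2(\mathbb{R}^N)$ is not compactly embedded even on radial functions, mass may escape along spreading profiles, so $\|u^j_\mu\|_2<a$ cannot be ruled out in general --- but such escaping mass carries no $I_\mu$-energy, which is precisely why the Pohozaev-identity argument still recovers the level $c^j_\mu(a)$.
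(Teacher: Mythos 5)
Your proposal is correct and follows essentially the same route as the paper: minimax via the genus family and Lemma \ref{lem5.3}, boundedness from Lemma \ref{lem2.5}(2), radial compactness to get strong $L^q$ convergence for $q\in(2,2\cdot2^*)$, nonvanishing from $c^j_\mu(a)>0$, passage to the limit equation, and then the Pohozaev identity plus weak lower semicontinuity to recover the level and the multiplier dichotomy. The only substantive difference is that you spell out the a.e.\ gradient convergence (Moser iteration plus a Boccardo--Murat type argument) needed to pass to the limit in the quasilinear terms, a step the paper's proof states without detail.
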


\begin{proof}
(1) By Lemma \ref{lem5.3} and Lemma \ref{lem5.5}, we may get a sequence $u_n^j \in \mathcal{S}_r(a)$ such that
$$I_\mu(u_n^j) \rightarrow c^j_\mu(a),\; I_\mu|'_{\mathcal{S}(a)}(u_n^j) \rightarrow 0,\;  P_\mu(u_n^j)=0,$$
as $n \rightarrow +\infty$. Taking into account Lemma \ref{lem2.5}(2), we can conclude that $u_n^j$ is bounded in $\Upsilon_r$. Thus, up to a subsequence, there exists a $u_\mu^j \in \Upsilon_r$ such that
\begin{align*}
 &u_n^j \rightharpoonup u_\mu^j     \;\; \;\;\;\;\;\;\;\;\;\;\;\;\;\;\;\;\,  \mbox{in} \; \Upsilon_r , \\
&u_n^j \nabla u_n^j \rightharpoonup u_\mu^j \nabla u_\mu^j \;\;\;\;\,\mbox{in}   \;\; (L^2(\mathbb{R}^N))^N, \\
 &u_n^j \rightarrow  u_\mu^j   \;\;\;\;\;\;\;\;\;\;\;\;\;\,\;\;\;\;\;   \mbox{in} \;L^q(\mathbb{R}^N)\; \mbox{for}\; \mbox{all}\; q \in (2,2^*), \\
 &u_n^j \rightarrow  u_\mu^j  \;\;\;\;\;\;\;\;\;\;\;\;\;\;\;\;\;\;  \,             \mbox{a.e.} \; \mbox{in} \; \mathbb{R}^N.
\end{align*}
It follows from \eqref{GNI} and the interpolation inequality that we obtain
$$\;\;u_n^j \rightarrow  u_\mu^j        \;\;\;\;\;\;\;\;\;\;\;\;\;\;\;   \mbox{in} \;L^q(\mathbb{R}^N)\; \mbox{for}\; \mbox{all}\; q \in (2,2\cdot2^*).$$
Assume $u_\mu^j=0$, then
\begin{align*}
\mu(1+\gamma_\theta) \int_{\mathbb{R}^N} |\nabla u_n^j|^{\theta} +  \int_{\mathbb{R}^N} |\nabla u_n^j|^2 + (N+2)\int_{\mathbb{R}^N} |u_n^j|^2 |\nabla u_n^j|^2
=P_\mu(u_n^j) +\gamma_p \int_{\mathbb{R}^N} |u_n^j|^p \rightarrow 0,
\end{align*}
which implies that $I_\mu(u_n^j) \rightarrow 0$, in contradiction with $c_\mu^j(a) >0$. Thus, $u_\mu^j \neq0.$
By \cite[Lemma 3]{BL1983}, we conclude that there exists a sequence $\lambda_n^j \in \mathbb{R}$ such that
\begin{equation}\label{eq3.7}
I'_\mu(u_n^j) +\lambda_n^j u_n^j \rightarrow 0 \;\; \mbox{in} \; \Upsilon^*.
\end{equation}
Hence $\lambda_n^j=-\frac{1}{a^2}I'_\mu(u_n^j)[u_n^j]+o_n(1)$ is bounded in $\mathbb{R}$. We assume, up to subsequence, $\lambda_n^j \rightarrow \lambda_\mu^j$. Since $u_n^j$ is bounded, we conclude
\begin{equation}\label{eq3.8}
I'_\mu(u_n^j) +\lambda_\mu^j u_n^j \rightarrow 0 \;\; \mbox{in} \; \Upsilon^*.
\end{equation}
As a result,
\begin{equation}\label{eq3.9}
I'_\mu(u_\mu^j) +\lambda_\mu^j u_\mu^j = 0 \;\; \mbox{in} \; \Upsilon^*.
\end{equation}

(2) By \eqref{eq3.9}, it is easy to see that $P_\mu(u_\mu^j)=0$.

(3) It follows from (2) that
$$ P_\mu(u_n^j) +\gamma_p \int_{\mathbb{R}^N} |u_n^j|^p  \rightarrow P_\mu(u_\mu^j) +\gamma_p \int_{\mathbb{R}^N} |u_\mu^j|^p  .$$
Taking into account the weak lower semicontinuous property \cite{CJS2010}, we have that
\begin{equation}\label{eq3.10}
\mu \int_{\mathbb{R}^N} |\nabla u_n^j|^{\theta} \rightarrow \mu \int_{\mathbb{R}^N} |\nabla u_\mu^j|^{\theta},
\end{equation}
\begin{equation}\label{eq3.11}
 \int_{\mathbb{R}^N} |\nabla u_n^j|^2 \rightarrow \int_{\mathbb{R}^N} |\nabla u_\mu^j|^2,
\end{equation}
\begin{equation}\label{eq3.12}
 \int_{\mathbb{R}^N} |u_n^j|^2 |\nabla u_n^j|^2 \rightarrow \int_{\mathbb{R}^N} |u_\mu^j|^2 |\nabla u_\mu^j|^2.
\end{equation}
Therefore,
$$ I_\mu(u_\mu^j)=\lim_{n\rightarrow \infty} I_\mu(u_n^j)=c_\mu^j(a).$$

(4) By \eqref{eq3.10}$-$\eqref{eq3.12}, we get
\begin{equation}\label{eq3.13}
I'_\mu(u_n^j)[u_n^j] \rightarrow I'_\mu(u_\mu^j)[u_\mu^j].
\end{equation}
Thus combining \eqref{eq3.8} with \eqref{eq3.9} and \eqref{eq3.13}, we conclude $\lambda_\mu^j \|u_n^j\|^2_2 \rightarrow \lambda_\mu^j \|u_\mu^j\|^2_2.$
Therefore, if $\lambda_\mu^j \neq0$, then
\begin{equation}\label{eq3.14}
 \|u_\mu^j\|_2=a.
\end{equation}

\end{proof}

\noindent Next, we establish a technical lemma, which means that the sequence of critical points of $I_\mu|_{\mathcal{S}(a)}$ converges to critical point $I|_{\mathcal{S}(a)}$ as $\mu \rightarrow 0^+$.

\begin{lemma}\label{lem4.1}
Let $\mu_n \rightarrow 0^+$, there exist sequence $\lambda_{\mu_n} \geq 0$ and $u_{\mu_n} \in \mathcal{S}_r(a_n)$ with $0<a_n\leq a$ such that $I'_{\mu_n}(u_{\mu_n})+\lambda_{\mu_n} u_{\mu_n}=0$, $I_{\mu_n}(u_{\mu_n}) \rightarrow c>0$. Then

(1) there exists a subsequence $u_{\mu_n}\rightharpoonup u$ in $H^1(\mathbb{R}^N)$ with $u\in H^1_r(\mathbb{R}^N) \cap L^{\infty}(\mathbb{R}^N)$;

(2) there exists a $\lambda \in \mathbb{R}$ such that $\Theta(u,\lambda)=0$;

(3) $I(u)=c$, $0<\|u\|_2\leq a$; if $\lambda\neq0$, then $\|u\|_2=\lim_{n\rightarrow \infty}a_n$.
\end{lemma}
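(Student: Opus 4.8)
The plan is to follow the scheme of the proof of Lemma \ref{lem4.1'}, the key point being that in the radial class the missing compactness is recovered for free, so neither translations nor a concentration--compactness analysis of the remainder are needed. First I would record the structural facts. Since $I'_{\mu_n}(u_{\mu_n})+\lambda_{\mu_n}u_{\mu_n}=0$, the function $u_{\mu_n}$ is a critical point of $I_{\mu_n}|_{\mathcal{S}(a_n)}$ and hence $P_{\mu_n}(u_{\mu_n})=0$; then Lemma \ref{lem2.5}(2) together with $I_{\mu_n}(u_{\mu_n})\to c$ gives that $\{u_{\mu_n}\}$ is bounded in $H^1(\mathbb{R}^N)$, with $\sup_n\int_{\mathbb{R}^N}|u_{\mu_n}|^2|\nabla u_{\mu_n}|^2<\infty$ and $\sup_n\mu_n\int_{\mathbb{R}^N}|\nabla u_{\mu_n}|^\theta<\infty$. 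Plugging $P_{\mu_n}(u_{\mu_n})=0$ into \eqref{GNI}, and using $p>4+\frac4N$ (so the exponent $\frac{N(p-2)}{2(N+2)}$ on $\int|u_{\mu_n}|^2|\nabla u_{\mu_n}|^2$ exceeds $1$) together with $p<2\cdot2^*$ (so the exponent $\frac{4N-p(N-2)}{2(N+2)}$ on the $L^2$-mass is positive), one obtains $\int_{\mathbb{R}^N}|u_{\mu_n}|^2|\nabla u_{\mu_n}|^2\geq c_0\,a_n^{-\kappa}$ for some $c_0,\kappa>0$; since the left-hand side is bounded, $\{a_n\}$ stays away from $0$, and passing to a subsequence $a_n\to a_\infty\in(0,a]$. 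Consequently $\lambda_{\mu_n}=-a_n^{-2}I'_{\mu_n}(u_{\mu_n})[u_{\mu_n}]$ is bounded (the numerator is controlled by Lemma \ref{lem2.5}(2) and \eqref{GNI}), so up to a subsequence $\lambda_{\mu_n}\to\lambda$ and $u_{\mu_n}\rightharpoonup u$ in $H^1_r(\mathbb{R}^N)$, $u_{\mu_n}\nabla u_{\mu_n}\rightharpoonup u\nabla u$ in $(L^2(\mathbb{R}^N))^N$, $u_{\mu_n}\to u$ a.e.

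For part (1), the uniform bound $\|u_{\mu_n}\|_\infty\leq C$ and $\|u\|_\infty\leq C$ is obtained by repeating the Moser-type iteration in the proof of Lemma \ref{lem4.1'}(1), which only uses $4<p<2\cdot2^*$. The decisive use of symmetry is then the following: since $\nabla(u_{\mu_n}^2)=2u_{\mu_n}\nabla u_{\mu_n}$ and, by \eqref{GNI}, $\{u_{\mu_n}\}$ is bounded in $L^4(\mathbb{R}^N)$, the squares $u_{\mu_n}^2$ are bounded in $H^1_r(\mathbb{R}^N)$; by the compact embedding $H^1_r(\mathbb{R}^N)\hookrightarrow\hookrightarrow L^s(\mathbb{R}^N)$ for $s\in(2,2^*)$ and $u_{\mu_n}^2\to u^2$ a.e., we get $u_{\mu_n}^2\to u^2$ in $L^s(\mathbb{R}^N)$ for every $s\in(2,2^*)$, equivalently $\int_{\mathbb{R}^N}|u_{\mu_n}|^q\to\int_{\mathbb{R}^N}|u|^q$ for every $q\in(4,2\cdot2^*)$; in particular this holds for $q=p$. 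From this it follows that $u\neq0$: were $u=0$, then $\int_{\mathbb{R}^N}|u_{\mu_n}|^p\to0$, so from $P_{\mu_n}(u_{\mu_n})=0$ each of $\mu_n\int|\nabla u_{\mu_n}|^\theta$, $\int|\nabla u_{\mu_n}|^2$, $\int|u_{\mu_n}|^2|\nabla u_{\mu_n}|^2$ tends to $0$, whence $I_{\mu_n}(u_{\mu_n})\to0$, contradicting $c>0$.

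For part (2) I would argue exactly as in the proof of Lemma \ref{lem4.1'}(2): test $I'_{\mu_n}(u_{\mu_n})+\lambda_{\mu_n}u_{\mu_n}=0$ against $\psi e^{-u_{\mu_n}}$ and then against $\psi e^{u_{\mu_n}}$ with $0\leq\psi\in\mathcal{C}_0^\infty(\mathbb{R}^N)$, noting that the $W^{1,\theta}$-term is $o(1)$ because $\mu_n\int|\nabla u_{\mu_n}|^\theta$ is bounded while $\mu_n\to0^+$, applying Fatou's lemma to the sign-definite term $\int(1+2u_{\mu_n}^2-2u_{\mu_n})\psi e^{-u_{\mu_n}}|\nabla u_{\mu_n}|^2$ and weak convergence together with dominated convergence (via the uniform $L^\infty$-bound) to the remaining terms, and finally replacing $\psi$ by a sequence approximating $\phi e^{u}$ in $H^1(\mathbb{R}^N)$ for a given $0\leq\phi\in\mathcal{C}_0^\infty(\mathbb{R}^N)$. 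This yields $\Theta(u,\lambda)=0$, and hence $P(u)=0$.

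Finally, for part (3) I would upgrade the weak convergences. From $P_{\mu_n}(u_{\mu_n})=0$ and $\int|u_{\mu_n}|^p\to\int|u|^p$,
\[
\mu_n(1+\gamma_\theta)\int_{\mathbb{R}^N}|\nabla u_{\mu_n}|^\theta+\int_{\mathbb{R}^N}|\nabla u_{\mu_n}|^2+(N+2)\int_{\mathbb{R}^N}|u_{\mu_n}|^2|\nabla u_{\mu_n}|^2\longrightarrow\gamma_p\int_{\mathbb{R}^N}|u|^p,
\]
while $P(u)=0$ reads $\int|\nabla u|^2+(N+2)\int|u|^2|\nabla u|^2=\gamma_p\int|u|^p$; since $\int|\nabla u|^2\leq\liminf\int|\nabla u_{\mu_n}|^2$ and $\int|u|^2|\nabla u|^2\leq\liminf\int|u_{\mu_n}|^2|\nabla u_{\mu_n}|^2$ by weak lower semicontinuity (using $u_{\mu_n}\nabla u_{\mu_n}\rightharpoonup u\nabla u$), a squeezing argument forces $\mu_n\int|\nabla u_{\mu_n}|^\theta\to0$, $\int|\nabla u_{\mu_n}|^2\to\int|\nabla u|^2$ and $\int|u_{\mu_n}|^2|\nabla u_{\mu_n}|^2\to\int|u|^2|\nabla u|^2$. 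Therefore $I_{\mu_n}(u_{\mu_n})\to I(u)$, so $I(u)=c$, and $0<\|u\|_2\leq\liminf\|u_{\mu_n}\|_2=a_\infty\leq a$. For the mass, these strong convergences give $I'_{\mu_n}(u_{\mu_n})[u_{\mu_n}]\to I'(u)[u]$, which by $\Theta(u,\lambda)=0$ equals $-\lambda\|u\|_2^2$; on the other hand $I'_{\mu_n}(u_{\mu_n})[u_{\mu_n}]=-\lambda_{\mu_n}a_n^2\to-\lambda a_\infty^2$, so $\lambda(\|u\|_2^2-a_\infty^2)=0$, and if $\lambda\neq0$ then $\|u\|_2=a_\infty=\lim_{n\to\infty}a_n$. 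The main obstacle is precisely this last squeezing step: it is legitimate only because $P_{\mu_n}(u_{\mu_n})=0$, $P(u)=0$ and the weak lower semicontinuity of the two gradient quantities can be balanced against one another, and that balance in turn rests on having first secured $\int|u_{\mu_n}|^p\to\int|u|^p$ from the radial compactness of the squares $u_{\mu_n}^2$.
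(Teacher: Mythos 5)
Your proposal is correct and follows essentially the same route as the paper: parts (1) and (2) are carried out exactly as in Lemma \ref{lem4.1'}, and part (3) rests on the same three ingredients — strong $L^p$ convergence from radial compactness, the identities $P_{\mu_n}(u_{\mu_n})=0$ and $P(u)=0$, and weak lower semicontinuity of the two gradient terms — squeezed to upgrade the weak convergences and then identify the mass via $\lambda\|u_{\mu_n}\|_2^2\to\lambda\|u\|_2^2$. Your two small variations (a quantitative GNI lower bound to keep $a_n$ away from $0$ instead of the paper's contradiction argument, and compactness of $u_{\mu_n}^2$ in $H^1_r$ instead of compactness of $u_{\mu_n}$ plus interpolation against the $L^{2\cdot2^*}$ bound) are cosmetic and equally valid.
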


\begin{proof}
(1) and (2) are similar to Lemma \ref{lem4.1'}, we omit it.

(3) Up to a subsequence, there exists a $u \in H^1(\mathbb{R}^N)$ such that
\begin{align*}
 &u_{\mu_n} \rightharpoonup u     \;\; \;\;\;\;\;\;\;\;\;\;\;\;\;\;\;\;\;  \mbox{in} \; H^1_r(\mathbb{R}^N) , \\
 &u_{\mu_n} \nabla u_{\mu_n} \rightharpoonup u \nabla u \;\;\;\;\,\,\mbox{in}   \; (L^2(\mathbb{R}^N))^N, \\
 &u_{\mu_n} \rightarrow  u        \;\;\;\;\;\;\;\;\;\;\;\;\;\;\;\;\;\;\;   \mbox{in} \;L^q(\mathbb{R}^N)\;\, \mbox{for}\; \mbox{all}\; q \in (2,2^*), \\
 &u_{\mu_n} \rightarrow  u   \;\;\;\;\;\;\;\;\;\;\;\;\;\;\;\;\;\;\;          \mbox{a.e.} \; \mbox{in} \; \mathbb{R}^N.
\end{align*}
Taking into account \eqref{GNI} and the interpolation inequality, we obtain that
$$\;u_{\mu_n} \rightarrow  u        \;\;\;\;\;\;\;\;\; \,  \mbox{in} \;L^q(\mathbb{R}^N)\;\, \mbox{for}\; \mbox{all}\; q \in (2,2\cdot2^*).$$
Assume $a_n \rightarrow 0$, it follows from \eqref{GNI} that $ \int_{\mathbb{R}^N} |u_{\mu_n}|^p \rightarrow 0$. Taking into account $P_{\mu_n}(u_{\mu_n})=0$, we can conclude that $I_{\mu_n}(u_{\mu_n}) \rightarrow 0<c$. This is impossible. Hence, $\liminf_{n\rightarrow \infty}a_n>0$, and so $\lambda_{\mu_n}=-\frac{1}{a^2_n}I'_{\mu_n}(u_{\mu_n})[u_{\mu_n}]$ is bounded in $\mathbb{R}$. Thus, up to a subsequence, $\lambda_{\mu_n} \rightarrow \lambda$ in $ \mathbb{R}$. And it is easy to see that
\begin{equation}\label{eq4-4}
I'_{\mu_n}(u_{\mu_n}) +\lambda u_{\mu_n} \rightarrow 0 \;\; \mbox{in} \; \Upsilon^*.
\end{equation}
Obviously,
$$P_{\mu_n}(u_{\mu_n}) +\gamma_p \int_{\mathbb{R}^N}|u_{\mu_n}|^p \rightarrow P(u) +\gamma_p\int_{\mathbb{R}^N}|u|^p.$$
Similar to \eqref{eq3.10}$-$\eqref{eq3.12}, we have
\begin{equation}\label{eq4.5}
\mu_n \int_{\mathbb{R}^N} |\nabla u_{\mu_n}|^{\theta} \rightarrow 0,
\end{equation}
\begin{equation}\label{eq4.6}
 \int_{\mathbb{R}^N} |\nabla u_{\mu_n}|^2 \rightarrow \int_{\mathbb{R}^N} |\nabla u|^2,
\end{equation}
\begin{equation}\label{eq4.7}
 \int_{\mathbb{R}^N} |u_{\mu_n}|^2 |\nabla u_{\mu_n}|^2 \rightarrow \int_{\mathbb{R}^N} |u|^2 |\nabla u|^2.
\end{equation}
Then
$$I(u)=\lim_{n\rightarrow \infty} I_{\mu_n}(u_{\mu_n})=c.$$
Consequently,
$$0<\|u\|_2 \leq \liminf_{n\rightarrow \infty} \|u_{\mu_n}\|_2 \leq a.$$
It follows from \eqref{eq4.5}$-$\eqref{eq4.7} that
\begin{equation}\label{eq4.13.1}
    I'_{\mu_n}(u_{\mu_n})[u_{\mu_n}] \rightarrow \int_{\mathbb{R}^N} |\nabla u|^2  +4\int_{\mathbb{R}^N} |u|^2 |\nabla u|^2
 -\int_{\mathbb{R}^N} |u|^p.
\end{equation}
Thus combining \eqref{eq4-4} with $\Theta(u,\lambda)=0$ and \eqref{eq4.13.1}, we obtain
$\lambda \|u_{\mu_n}\|^2_2 \rightarrow \lambda \|u\|^2_2$.
If $\lambda \neq0$, then
$$\|u\|_2=\lim_{n\rightarrow \infty}a_n.$$

\end{proof}

Now, we can complete the proof of Theorem \ref{Thm1.2}.

\begin{proof}[Proof of Theorem \ref{Thm1.2}]
It follows from Lemma \ref{lem5.5} that
$$\tilde{b}^j(a)= \lim_{\mu \rightarrow 0^+} c^j_\mu(a)\geq \frac{\zeta}{8}>0,\; \tilde{b}^j(a) \rightarrow +\infty,$$
as $j \rightarrow +\infty$. By Lemma \ref{lem5.6}, for any $j\in \mathbb{N}^+$, there exists a sequence $\{\mu^j_n\}$ such that
$$\mu^j_n \rightarrow 0^+,\; I'_{\mu^j_n}\left(u^j_{\mu^j_n}\right) +\lambda^j_{\mu^j_n} u^j_{\mu^j_n}=0,\; I_{\mu^j_n}\left(u^j_{\mu^j_n}\right)=c^j_{\mu^j_n}(a) \rightarrow \tilde{b}^j(a),$$
as $n \rightarrow +\infty$, where $u^j_{\mu^j_n} \in \mathcal{S}_r(a^j_n)$ with $0<a^j_n \leq a$. As a result, Lemma \ref{lem4.1} implies that there exist a $u^j \in H^1_r(\mathbb{R}^N) \cap L^\infty(\mathbb{R}^N)$ and a $\lambda^j \in \mathbb{R}$ such that
$$\Theta(u^j,\lambda^j)=0,\; I(u^j)=\tilde{b}^j(a),\; 0<\|u^j\|_2\leq a.$$
It follows from Lemma \ref{lambda} that $\lambda^j >0$. So $\lambda^j_{\mu^j_n}>0$ for $n$ sufficiently large, which means $a^j_n=\left\|u^j_{\mu^j_n}\right\|_2=a$. Therefore, $\|u^j\|_2=\lim_{n\rightarrow \infty}a^j_n =a $. Moreover, $I(u^j)=\tilde{b}^j(a) \rightarrow +\infty$.
\end{proof}

\section{Proof of Theorem \ref{Thm1.5}}\label{sec6}

In this section, we assume $N\geq 3$ and split the proof of Theorem \ref{Thm1.5} into several Lemmas.

\begin{lemma}\label{lem6.3}
    If $4+\frac{4}{N}<p< 2\cdot2^*$, then the function $a \mapsto m(a)$ is lower semicontinuous with respect to any $a>0$.
\end{lemma}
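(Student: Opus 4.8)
The plan is to verify lower semicontinuity in sequential form: fix $a>0$ and an arbitrary sequence $a_n\to a$, and show $\liminf_{n\to\infty}m(a_n)\ge m(a)$. Passing to a subsequence I may assume $m(a_n)\to L:=\liminf_n m(a_n)$, and comparing with $\max_{s\in\mathbb{R}}I\bigl(s*(a_nv)\bigr)$ for a fixed $v\in\mathcal{S}'(1)$ (using the analogue of Remark~\ref{rem02.5}(1) for $P$) shows $L<+\infty$. For each $n$ the level $m(a_n)$ is attained by a Schwarz symmetric minimizer $u_n$ (see \cite{ZZJ2025,ZCW2023}); thus $u_n\in\mathcal{S}'(a_n)\cap C^2(\mathbb{R}^N)\cap L^\infty(\mathbb{R}^N)$, $P(u_n)=0$, and $\Theta(u_n,\lambda_n)=0$ for some $\lambda_n\in\mathbb{R}$, with $\lambda_n>0$ by Lemma~\ref{lambda}. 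Repeating the computation behind Lemma~\ref{lem2.5}(2) for the unperturbed functional, $P(u_n)=0$ and $I(u_n)=m(a_n)\to L$ force $\{\|\nabla u_n\|_2^2\}$, $\{\int_{\mathbb{R}^N}|u_n|^2|\nabla u_n|^2\}$ and $\{\|u_n\|_p^p\}$ to be bounded, so $\{u_n\}$ is bounded in $H^1_r(\mathbb{R}^N)$; testing $\Theta(u_n,\lambda_n)=0$ against $u_n$ gives $\lambda_n a_n^2=\|u_n\|_p^p-\|\nabla u_n\|_2^2-4\int_{\mathbb{R}^N}|u_n|^2|\nabla u_n|^2$, whence $\{\lambda_n\}$ is bounded. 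Up to a further subsequence, $\lambda_n\to\lambda\ge 0$ and $u_n\rightharpoonup u$ in $H^1_r(\mathbb{R}^N)$, $u_n\nabla u_n\rightharpoonup u\nabla u$ in $(L^2(\mathbb{R}^N))^N$, $u_n\to u$ in $L^q_{loc}(\mathbb{R}^N)$ and a.e. in $\mathbb{R}^N$.

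Next I would exploit the radial symmetry. The compact embedding $H^1_r(\mathbb{R}^N)\hookrightarrow L^{2+\frac4N}(\mathbb{R}^N)$, the uniform bound on $\|u_n\|_{2\cdot2^*}$ coming from \eqref{GNI} (the exponent on $\|u_n\|_2$ vanishes there), and interpolation give $u_n\to u$ in $L^p(\mathbb{R}^N)$ (legitimate since $4+\frac4N<p<2\cdot2^*$), so $\int_{\mathbb{R}^N}|u_n|^p\to\int_{\mathbb{R}^N}|u|^p$. If $u=0$, then $P(u_n)=0$ forces $\|\nabla u_n\|_2^2+(N+2)\int_{\mathbb{R}^N}|u_n|^2|\nabla u_n|^2=\gamma_p\|u_n\|_p^p\to 0$, hence $I(u_n)\to 0$, contradicting $I(u_n)=m(a_n)\ge\frac{\zeta}{8}>0$ (Remark~\ref{rem3.5}); thus $u\ne 0$. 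A Moser iteration identical to the one in the proof of Lemma~\ref{lem4.1'}(1) yields a bound $\|u_n\|_\infty+\|u\|_\infty\le C$ uniform in $n$, and then the test-function device with $\varphi=\psi e^{\mp u_n}$ ($0\le\psi\in C^\infty_0(\mathbb{R}^N)$), exactly as in Lemma~\ref{lem4.1'}(2), lets me pass to the limit in the nondifferentiable identity $\Theta(u_n,\lambda_n)=0$ and obtain $\Theta(u,\lambda)=0$; in particular $P(u)=0$ and $0<b:=\|u\|_2\le a$.

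Finally, since $u\in\mathcal{S}'(b)$ with $b>0$ and $\Theta(u,\lambda)=0$, Lemma~\ref{lambda} forces $\lambda>0$. Testing $\Theta(u,\lambda)=0$ against $u$ gives $\|\nabla u\|_2^2+4\int_{\mathbb{R}^N}|u|^2|\nabla u|^2+\lambda b^2=\|u\|_p^p$, while weak lower semicontinuity of $u\mapsto\|\nabla u\|_2^2$ and of $u\mapsto\int_{\mathbb{R}^N}|u|^2|\nabla u|^2$, combined with $\|\nabla u_n\|_2^2+4\int_{\mathbb{R}^N}|u_n|^2|\nabla u_n|^2=\|u_n\|_p^p-\lambda_n a_n^2\to\|u\|_p^p-\lambda a^2$, give $\|\nabla u\|_2^2+4\int_{\mathbb{R}^N}|u|^2|\nabla u|^2+\lambda a^2\le\|u\|_p^p$. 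Subtracting, $\lambda(a^2-b^2)\le0$, so $b=a$; moreover both weak-lsc inequalities must then be equalities, which forces $\|\nabla u_n\|_2^2\to\|\nabla u\|_2^2$ and $\int_{\mathbb{R}^N}|u_n|^2|\nabla u_n|^2\to\int_{\mathbb{R}^N}|u|^2|\nabla u|^2$, hence $I(u_n)\to I(u)$ and $I(u)=L$. Since $u\in\mathcal{S}'(a)$ solves $\Theta(u,\lambda)=0$ with $I(u)=L$, we conclude $L=I(u)\ge\sigma(a)=m(a)$, which is the claim. I expect the main difficulties to be the passage to the limit in the quasilinear equation—circumvented by importing the $e^{\mp u_n}$ argument of Lemma~\ref{lem4.1'}—and ruling out mass loss in the weak limit, where the decisive point is that Lemma~\ref{lambda} forces $\lambda>0$ and thereby upgrades the weak-lower-semicontinuity inequality to an identity.
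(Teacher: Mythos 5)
Your argument is correct in its essentials, but it takes a genuinely different route from the paper. The paper's proof never touches the Euler--Lagrange equation: it picks near-minimizers $v_n\in\mathcal{P}'(a_n)$ with $I(v_n)\le m(a_n)+\frac1n$, dilates them spatially by $\tilde v_n(x):=v_n(x/t_n)$ with $t_n=(a/a_n)^{2/N}\to1$ so that $\tilde v_n\in\mathcal{S}'(a)$, and then estimates $m(a)\le I(s(\tilde v_n)*\tilde v_n)\le |I(s(\tilde v_n)*\tilde v_n)-I(s(\tilde v_n)*v_n)|+m(a_n)+\frac1n$, where the error term is $o_n(1)$ thanks to the a priori bounds of Lemma \ref{lem2.5}(2), a non-vanishing argument, and a bound on $s(\tilde v_n)$. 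Your proof instead assumes each $m(a_n)$ is \emph{attained} by a Schwarz symmetric minimizer and runs a full compactness analysis on the sequence of minimizers (Moser iteration, the $e^{\mp u_n}$ test functions, passage to the limit in $\Theta(u_n,\lambda_n)=0$, and the sign of $\lambda$ to exclude mass loss); the final bookkeeping $\lambda(a^2-b^2)\le0$, the upgrade of weak lower semicontinuity to norm convergence, and the observation that $\Theta(u,\lambda)=0$ already gives $P(u)=0$ (so $I(u)\ge m(a)$ without invoking $\sigma(a)=m(a)$) are all sound. The trade-off: you pay with a heavier dependency, since attainment of $m(a_n)$ is imported from \cite{ZZJ2025,ZCW2023} and, together with Lemma \ref{lambda}, is only available for $N=3,4$, whereas Section 6 and Theorem \ref{Thm1.5} are stated for $N\ge3$ and the paper's dilation argument needs neither ingredient; you gain strictly more information, namely that minimizers for masses $a_n\to a$ converge strongly in $H^1_r(\mathbb{R}^N)$ to a minimizer for mass $a$, which is a compactness statement well beyond lower semicontinuity. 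Two details to tighten: the constant $\zeta$ in Lemma \ref{lem2.2} depends on the mass through the Gagliardo--Nirenberg constant, so the lower bound $I(u_n)\ge\zeta/8$ should be justified uniformly in $n$ using the boundedness of $\{a_n\}$; and the bound on $\int_{\mathbb{R}^N}|u_n|^2|\nabla u_n|^2$ is not literally Lemma \ref{lem2.5}(2) (which is stated for fixed mass and $\mu>0$), so its dichotomy argument must be rerun with $\mu=0$ and varying masses --- it does go through, but this should be said.
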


\begin{proof}
It suffices to prove that for any positive sequence $\{a_n\}$ with $a_n \rightarrow a$ as $n \rightarrow \infty$, we have $m(a) \leq \liminf_{n \rightarrow \infty}m(a_n)$. We divide the proof into two steps.

\textbf{Step 1:}
We prove that $\limsup_{n\rightarrow \infty}m(a_n) \leq C$.

Fix $u \in \mathcal{S}'(a) \cap L^{\infty}(\mathbb{R}^N)$, set $u_n:=\frac{a_n}{a}u  \in \mathcal{S}'(a_n)$. It is clear that $\int_{\mathbb{R}^N} |u_n|^2 |\nabla u_n|^2 \rightarrow \int_{\mathbb{R}^N} |u|^2 |\nabla u|^2$ and $u_n \rightarrow u$ in $H^1(\mathbb{R}^N)$. By Lemma \ref{lem2.4}, there exists a unique $s(u_n) \in \mathbb{R}$ such that $s(u_n)*u_n \in \mathcal{P}'(a_n)$ and $\lim_{n \rightarrow \infty}s(u_n) = s(u).$
And thus
 $$s(u_n)*u_n \rightarrow s(u)*u\;\,\mbox{in} \; H^1(\mathbb{R}^N),\;\int_{\mathbb{R}^N} |s(u_n)*u_n|^2 |\nabla (s(u_n)*u_n)|^2 \rightarrow \int_{\mathbb{R}^N} |s(u)*u|^2 |\nabla (s(u)*u)|^2.$$
As a consequence,
$$\limsup_{n\rightarrow \infty}m(a_n) \leq
\limsup_{n\rightarrow \infty}I(s(u_n)*u_n) =
I(s(u)*u) .$$

\textbf{Step 2:}
We show that $m(a)  \leq \liminf_{n\rightarrow \infty}m(a_n)$.

In fact, for each $n \in \mathbb{N}^+$, from the definition of the infimum, there exists a sequence $v_n \in \mathcal{P}'(a_n)$ such that
$$I(v_n) \leq m(a_n)+\frac{1}{n}.$$
Let $t_n:=\left(\frac{a}{a_n}\right)^\frac{2}{N} \rightarrow 1$ as $n \rightarrow \infty$ and $\tilde{v}_n(x):=v_n(\frac{x}{t_n}) \in \mathcal{S}'(a)$. It is easy to see that
\begin{align*}
m(a) \leq&\ I(s(\tilde{v}_n)*\tilde{v}_n)  \\
        \leq&\ I(s(\tilde{v}_n)*\tilde{v}_n)-I(s(\tilde{v}_n)*v_n)+I(s(\tilde{v}_n)*v_n)   \\
        \leq&\ |I(s(\tilde{v}_n)*\tilde{v}_n)-I(s(\tilde{v}_n)*v_n)|+I(v_n)  \\
        \leq&\ |I(s(\tilde{v}_n)*\tilde{v}_n)-I(s(\tilde{v}_n)*v_n)| +m(a_n)+\frac{1}{n}.
\end{align*}
Clearly,
\begin{align*}
    &\ |I(s(\tilde{v}_n)*\tilde{v}_n)-I(s(\tilde{v}_n)*v_n)|    \\
    =&\ \bigg|\frac{1}{2}(t^{N-2}_n-1)\int_{\mathbb{R}^N} |\nabla (s(\tilde{v}_n)*v_n)|^2 + (t^{N-2}_n-1)\int_{\mathbb{R}^N} |s(\tilde{v}_n)*v_n|^2 |\nabla (s(\tilde{v}_n)*v_n)|^2    \\
    &-\frac{1}{p}(t_n^N-1)\int_{\mathbb{R}^N}|s(\tilde{v}_n)*v_n|^p\bigg|  \\
    \leq&\ \frac{1}{2}|t^{N-2}_n-1|\int_{\mathbb{R}^N} |\nabla (s(\tilde{v}_n)*v_n)|^2 + |t^{N-2}_n-1|\int_{\mathbb{R}^N} |s(\tilde{v}_n)*v_n|^2 |\nabla (s(\tilde{v}_n)*v_n)|^2   \\
    &+\frac{1}{p}|t_n^N-1|\int_{\mathbb{R}^N}|s(\tilde{v}_n)*v_n|^p.
\end{align*}
By Lemma \ref{lem2.5}(2) and Step 1, we can conclude that $\int_{\mathbb{R}^N} |v_n|^2 |\nabla (v_n)|^2 \leq C$ and $\{v_n\}$ is bounded in $H^1(\mathbb{R}^N)$, which combining with $t_n \rightarrow 1$ as $n \rightarrow \infty$ we see that $\int_{\mathbb{R}^N} |\tilde{v}_n|^2 |\nabla (\tilde{v}_n)|^2 \leq C$ and $\{\tilde{v}_n\}$ is bounded in $H^1(\mathbb{R}^N)$.

\textbf{Claim 1:}
There exists a sequence  $\{\tilde{y}_n\} \subset \mathbb{R}^N$ and $\tilde{v} \in H^1(\mathbb{R}^N)$ such that up to a subsequence $\tilde{v}_n(\cdot +\tilde{y}_n) \rightarrow \tilde{v}\neq 0$ a.e. in $\mathbb{R}^N$.

Set
$$\tilde{\rho} :=\limsup_{n \rightarrow \infty} \sup_{\tilde{y} \in \mathbb{R}^N} \int_{B_1(\tilde{y})} |\tilde{v}_n|^2.$$
 If $\tilde{\rho}=0$, then $\tilde{v}_n \rightarrow 0$ in $L^{p}(\mathbb{R}^N)$. It is easy to see that
 $$\int_{\mathbb{R}^N}|v_n(x)|^pdx=\int_{\mathbb{R}^N}|\tilde{v}_n(t_nx)|^pdx=t_n^{-N}\int_{\mathbb{R}^N}|\tilde{v}_n(x)|^pdx \rightarrow 0.$$
At the same time, $P(v_n)=0$ gives us
$$\int_{\mathbb{R}^N} |\nabla v_n|^2 +
(N+2)\int_{\mathbb{R}^N} |v_n|^2 |\nabla v_n|^2 =\gamma_p \int_{\mathbb{R}^N} |v_n|^p \rightarrow 0.$$
In view of Lemma \ref{lem2.2}, we obtain
$$0=P(v_n) \geq \frac{1}{2} \left(\int_{\mathbb{R}^N} |\nabla v_n|^2 +\int_{\mathbb{R}^N} |v_n|^2 |\nabla v_n|^2\right),$$
this is impossible. Therefore, $\tilde{\rho}>0$, which implies that there exists a sequence  $\{\tilde{y}_n\} \subset \mathbb{R}^N$ and $\tilde{v} \in H^1(\mathbb{R}^N)$ such that up to a subsequence $\tilde{v}_n(\cdot +\tilde{y}_n) \rightarrow \tilde{v}\neq 0$ a.e. in $\mathbb{R}^N$.

\textbf{Claim 2:}
$\limsup_{n \rightarrow \infty}s(\tilde{v}_n) \leq C.$

By Claim 1, we can let $\tilde{z}_n:=\tilde{v}_n(\cdot +\tilde{y}_n) \rightarrow \tilde{v} \neq 0$. Suppose that up to a subsequence $s(\tilde{v}_n) \rightarrow +\infty$ as $n \rightarrow \infty$. Hence, it follows from Lemma \ref{lem2.4}(3) that

$$s(\tilde{z}_n)=s(\tilde{v}_n(\cdot +\tilde{y}_n))=s(\tilde{v}_n) \rightarrow + \infty.$$
As a result,
\begin{align*}
0&\leq e^{-(N+2)s(\tilde{z}_n)} I(s(\tilde{z}_n)*\tilde{z}_n) \\
&\leq  \frac{1}{2}e^{-Ns(\tilde{z}_n)} \int_{\mathbb{R}^N} |\nabla \tilde{z}_n|^2 + \int_{\mathbb{R}^N} |\tilde{z}_n|^2 |\nabla \tilde{z}_n|^2 -\frac{1}{p}e^{[p\gamma_p-(N+2)]s(\tilde{z}_n)}\int_{\mathbb{R}^N}|\tilde{z}_n|^p  \\
&\rightarrow -\infty,
\end{align*}
as $n \rightarrow \infty$, a contradiction. The proof of Claim 2 is complete.

It is easy to see that
$$\int_{\mathbb{R}^N} |\nabla (s(\tilde{v}_n)*v_n)|^2 \leq C,$$
 $$\int_{\mathbb{R}^N} |s(\tilde{v}_n)*v_n|^2 |\nabla (s(\tilde{v}_n)*v_n)|^2 \leq C,$$
 $$\int_{\mathbb{R}^N}|s(\tilde{v}_n)*v_n|^p \leq C.$$
As a result,
$$m(a) \leq m(a_n) +o_n(1),$$
to wit,
$$m(a) \leq \liminf_{n \rightarrow \infty}m(a_n).$$
\end{proof}

\begin{remark}\label{rem7.2}
    It is easy to see that the function $a \mapsto m_\mu(a)$ is lower semicontinuous with respect to any $a>0$ when $4+\frac{4}{N}<p< 2\cdot2^*$.
\end{remark}

\begin{lemma}\label{lem6.1}
 If $4+\frac{4}{N}<p< 2^*$, then $m(a) \rightarrow 0^+$ as $a \rightarrow +\infty$.
\end{lemma}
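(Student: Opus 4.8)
\medskip

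The plan is to dominate $m(a)$ by the mountain--pass value of $I$ along the $*$--orbit of a single fixed bump function and to show this value decays as $a\to+\infty$. Recall from Remark~\ref{rem02.5}(2) that the analogues of Lemma~\ref{lem2.4}(1),(3) hold for $P$: for every $u\in\mathcal S'(a)$ there is a unique $s(u)\in\mathbb R$ with $s(u)*u\in\mathcal P'(a)$ and $I(s(u)*u)=\max_{s\in\mathbb R}I(s*u)$, so $m(a)\le\max_{s\in\mathbb R}I(s*u)$ for every such $u$. I would fix $\varphi\in C_c^\infty(\mathbb R^N)\setminus\{0\}$ with $\|\varphi\|_2=1$, set $u_a:=a\varphi\in\mathcal S'(a)$, and abbreviate $A:=\int_{\mathbb R^N}|\nabla\varphi|^2$, $B:=\int_{\mathbb R^N}|\varphi|^2|\nabla\varphi|^2$, $D:=\int_{\mathbb R^N}|\varphi|^p$ and $q:=p\gamma_p=\tfrac{N(p-2)}{2}$. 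The $L^2$--invariant scaling \eqref{eq2-5}, applied to $u_a$, yields (with $\lambda=e^{s}$)
$$m(a)\le\max_{s\in\mathbb R}I(s*u_a)=\sup_{\lambda>0}h_a(\lambda),\qquad h_a(\lambda):=\frac{Aa^2}{2}\,\lambda^2+Ba^4\,\lambda^{N+2}-\frac{Da^p}{p}\,\lambda^{q}.$$
Since $p>4+\tfrac4N$ forces $q>N+2>2$, $h_a$ is positive near $0$ and tends to $-\infty$ as $\lambda\to+\infty$, so this supremum is finite and positive.

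Next I would split $h_a(\lambda)=\big(\tfrac{Aa^2}{2}\lambda^2-\tfrac{Da^p}{2p}\lambda^{q}\big)+\big(Ba^4\lambda^{N+2}-\tfrac{Da^p}{2p}\lambda^{q}\big)$ and apply to each bracket the elementary identity $\sup_{\lambda>0}\big(c\lambda^{\alpha}-c'\lambda^{q}\big)=C(\alpha,q)\,c^{\frac{q}{q-\alpha}}(c')^{-\frac{\alpha}{q-\alpha}}$, valid for $c,c'>0$ and $0<\alpha<q$, with $\alpha=2$ and $\alpha=N+2$ respectively. After the cancellations the powers of $a$ collapse and one obtains
$$m(a)\le C_1\,a^{\frac{2(q-p)}{q-2}}+C_2\,a^{\frac{4q-(N+2)p}{q-N-2}}.$$
Now $q-2>0$, and $q-p=\tfrac{(N-2)p-2N}{2}<0$ because $p<2^*$; likewise $q-N-2=\tfrac{N(p-4)-4}{2}>0$ because $p>4+\tfrac4N$, while $4q-(N+2)p=(N-2)p-4N<0$ because $p<2^*<2\cdot2^*$. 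Hence both exponents of $a$ are strictly negative and $m(a)\to0$ as $a\to+\infty$; combined with $m(a)\ge\tfrac{\zeta}{8}>0$ (Remark~\ref{rem3.5}) this gives $m(a)\to0^+$.

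The only point requiring care is the exponent bookkeeping — verifying that, after the Young--type splitting, both resulting powers of $a$ are negative, which is precisely where the two--sided restriction $4+\tfrac4N<p<2^*$ is used (upper bound $p<2^*$ for negativity of $q-p$ and of $4q-(N+2)p$, lower bound $p>4+\tfrac4N$ for positivity of $q-N-2$). Everything else — the scaling identities for $I(s*u_a)$, the endpoint behaviour of $h_a$, and the one--variable maximization — is routine.
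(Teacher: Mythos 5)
Your proof is correct, and it takes a genuinely different route from the paper's. Both arguments start identically: test $m(a)$ against the fiber $\{s*u_a\}$ of a single rescaled function $u_a=a\varphi$, using the fact (Remark~\ref{rem02.5}(2), Lemma~\ref{lem2.4}(1)) that $m(a)\le\max_{s}I(s*u_a)$. From there the paper does \emph{not} maximize the fibering map explicitly; instead it keeps only the two positive terms, $m(a)\le \tfrac{A}{2}(ae^{s_a})^2+B\bigl(ae^{\frac{N+2}{4}s_a}\bigr)^4$, and then extracts the asymptotics of the Pohozaev parameter $s_a$ from the constraint $P(s_a*u_a)=0$ by a contradiction argument in several cases, including a pointwise comparison $|t|^p\ge M|t|^{2^*}$ for $|t|\le\delta$ — this is where $p<2^*$ enters for them. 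You instead split the fibering polynomial $h_a(\lambda)=\bigl(\tfrac{Aa^2}{2}\lambda^2-\tfrac{Da^p}{2p}\lambda^{q}\bigr)+\bigl(Ba^4\lambda^{N+2}-\tfrac{Da^p}{2p}\lambda^{q}\bigr)$ and maximize each piece in closed form; I checked the exponent bookkeeping and it is right: with $q=\tfrac{N(p-2)}{2}$ one has $q-2>0$ and $q-N-2>0$ from $p>4+\tfrac4N$, while $q-p<0$ (equivalently $\gamma_p<1$) and $4q-(N+2)p=(N-2)p-4N<0$ from $p<2^*$, so both powers of $a$ are strictly negative. Your version is shorter, avoids the case analysis on $\limsup_{a\to+\infty}ae^{\frac{N}{2}s_a}$ and $\limsup_{a\to+\infty}ae^{s_a}$ entirely, and yields an explicit polynomial decay rate $m(a)\le C_1a^{\frac{2(q-p)}{q-2}}+C_2a^{\frac{4q-(N+2)p}{q-N-2}}$, which is more information than the paper's qualitative statement; the paper's version, on the other hand, makes visible exactly how the projection parameter $s_a$ behaves, which is reused nowhere else, so nothing is lost by your shortcut. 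The only cosmetic caveat is that the positivity $m(a)>0$ should be quoted simply as positivity (Remark~\ref{rem3.5}), since the constant $\zeta$ there depends on $a$ through the Gagliardo--Nirenberg constant and is not a uniform lower bound — but that is exactly how the paper uses it too.
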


\begin{proof}
Fix $u \in \mathcal{S}'(1) \cap L^{\infty}(\mathbb{R}^N)$, set $u_a:=au \in \mathcal{S}'(a)$ for any $a>1$. By Lemma \ref{lem2.4}(1), there exists a unique $s_a \in \mathbb{R}$ such that $s_a*u_a \in \mathcal{P}'(a)$. And by means of Remark \ref{rem3.5}, we know that
\begin{align*}
0<&\; m(a)\leq I(s_a*u_a)  \\
=&\ \frac{1}{2}e^{2s_a} \int_{\mathbb{R}^N} |\nabla u_a|^2 + e^{(N+2)s_a}\int_{\mathbb{R}^N} |u_a|^2 |\nabla u_a|^2 -\frac{1}{p}e^{p\gamma_p s_a}\int_{\mathbb{R}^N}  |u_a|^p   \\
\leq&\ \frac{1}{2}a^2e^{2s_a} \int_{\mathbb{R}^N} |\nabla u|^2 + a^4e^{(N+2)s_a}\int_{\mathbb{R}^N} |u|^2 |\nabla u|^2.
\end{align*}
Next, we prove that
$$\lim_{a \rightarrow + \infty}ae^{s_a}=\lim_{a \rightarrow + \infty}ae^{\frac{N+2}{4}s_a}=0.$$
Indeed, by $s_a*u_a \in \mathcal{P}'(a)$, we see that
$$a^2e^{2s_a} \int_{\mathbb{R}^N} |\nabla u|^2 + (N+2)a^4e^{(N+2)s_a}\int_{\mathbb{R}^N} |u|^2 |\nabla u|^2 =\gamma_p a^p e^{p\gamma_p s_a}\int_{\mathbb{R}^N}  |u|^p.$$
Clearly,
\begin{align*}
&\ a^{-2}e^{-Ns_a} \int_{\mathbb{R}^N} |\nabla u|^2 + (N+2)\int_{\mathbb{R}^N} |u|^2 |\nabla u|^2  \\
=&\ \gamma_p a^{p-4} e^{[p\gamma_p-(N+2)] s_a}\int_{\mathbb{R}^N}  |u|^p.
\end{align*}
If $\limsup_{a \rightarrow + \infty}ae^{\frac{N}{2}s_a}=+\infty$ \,or\, $\limsup_{a \rightarrow + \infty}ae^{\frac{N}{2}s_a}=C_1>0$, we can derive a contradiction. As a result,
$$\lim_{a \rightarrow + \infty}ae^{\frac{N}{2}s_a}=0 \;\;\mbox{and}\;\; \lim_{a \rightarrow + \infty}e^{s_a}=0.$$
Assume $\limsup_{a \rightarrow + \infty}ae^{s_a}=C_2>0$, we can deduce that for $M>\gamma_p^{-1}C_2^{-\frac{4}{N-2}}\frac{\int_{\mathbb{R}^N} |\nabla u|^2}{\int_{\mathbb{R}^N} |u|^{2^*}}>0$, there exists a $\delta>0$ such that as $|t|\leq \delta$, we have
$$|t|^p\geq M|t|^{2^*}.$$
Therefore, for $a$ satisfying $|e^{\frac{N}{2}s_a}au|\leq \delta$, there holds
\begin{align*}
&\ a^{-N}e^{-Ns_a} \int_{\mathbb{R}^N} |\nabla u|^2 + (N+2)a^{-(N-2)}\int_{\mathbb{R}^N} |u|^2 |\nabla u|^2 \\
=&\ \gamma_p a^{-(N+2)}e^{[p\gamma_p-(N+2)-\frac{Np}{2}] s_a}\int_{\mathbb{R}^N}  |e^{\frac{N}{2}s_a}au|^p  \\
\geq&\ \gamma_p a^{2^*-(N+2)}e^{[p\gamma_p-(N+2)+\frac{N}{2}(2^*-p)] s_a} M \int_{\mathbb{R}^N}|u|^{2^*} \\
=&\ \gamma_pM(ae^{s_a})^{2^*-(N+2)}\int_{\mathbb{R}^N}|u|^{2^*},
\end{align*}
which implies
$$C_2^{-N}\int_{\mathbb{R}^N} |\nabla u|^2 \geq \gamma_p MC_2^{2^*-(N+2)}\int_{\mathbb{R}^N}|u|^{2^*}.$$
This is impossible. And if $\limsup_{a \rightarrow + \infty}ae^{s_a}=+\infty$, we can derive a contradiction. Thereby,
$$\lim_{a \rightarrow + \infty}ae^{s_a}=0.$$
And it follows from $\lim_{a \rightarrow + \infty}e^{s_a}=0$ that
$$\lim_{a \rightarrow + \infty}ae^{\frac{N+2}{4}s_a}=0.$$
It is clear that $m(a) \rightarrow 0^+$ as $a \rightarrow +\infty$.

\end{proof}

\begin{lemma}\label{lem6.2}
   If $4+\frac{4}{N}<p< 2\cdot2^*$, then $m(a) \rightarrow +\infty$ as $a \rightarrow 0^+$.
\end{lemma}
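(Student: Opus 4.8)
The plan is to bound $I(u)$ from below on the Pohozaev manifold $\mathcal{P}'(a)$ by a quantity of the form $c\,a^{-\beta}$ with $c,\beta>0$ independent of $u$, and then pass to the infimum. For $u\in\mathcal{P}'(a)$ put $A:=\int_{\mathbb{R}^N}|\nabla u|^2$, $B:=\int_{\mathbb{R}^N}|u|^2|\nabla u|^2=\tfrac14\int_{\mathbb{R}^N}|\nabla(u^2)|^2$ and $D:=\int_{\mathbb{R}^N}|u|^p$. Observe first that $B>0$: otherwise $\nabla(u^2)\equiv0$, so $u^2$ is constant, which is impossible for $u\in L^2(\mathbb{R}^N)\setminus\{0\}$. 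The constraint $P(u)=0$ reads
\[A+(N+2)B=\gamma_pD,\qquad\text{with }\gamma_p=\tfrac{N(p-2)}{2p},\ \text{hence }p\gamma_p=\tfrac{N(p-2)}{2}.\]

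Next I would extract a lower bound on $B$. Applying the Gagliardo--Nirenberg inequality \eqref{GNI} with $q=p$ and $\int_{\mathbb{R}^N}|u|^2=a^2$ gives $D\le C_p4^{\delta_p}a^{2\alpha_p}B^{\delta_p}$, where $\alpha_p=\tfrac{4N-p(N-2)}{2(N+2)}$ and $\delta_p=\tfrac{N(p-2)}{2(N+2)}$. Since $p>4+\tfrac4N$ we have $\delta_p>1$, and since $p<2\cdot2^*$ we have $4N-p(N-2)>0$, so $\alpha_p>0$. Because $A\ge0$, the constraint gives $(N+2)B\le\gamma_pD\le\gamma_pC_p4^{\delta_p}a^{2\alpha_p}B^{\delta_p}$; dividing by $B^{\delta_p}>0$ yields
\[B\ \ge\ \Big(\tfrac{N+2}{\gamma_pC_p4^{\delta_p}}\Big)^{\frac1{\delta_p-1}}a^{-\frac{2\alpha_p}{\delta_p-1}}\ =:\ Ka^{-\beta},\qquad K>0,\ \beta:=\tfrac{2\alpha_p}{\delta_p-1}>0.\]

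Then I would rewrite the energy on $\mathcal{P}'(a)$: substituting $D=\tfrac1{\gamma_p}\big(A+(N+2)B\big)$ into $I(u)=\tfrac12A+B-\tfrac1pD$ gives
\[I(u)=\Big(\tfrac12-\tfrac1{p\gamma_p}\Big)A+\Big(1-\tfrac{N+2}{p\gamma_p}\Big)B=\Big(\tfrac12-\tfrac{2}{N(p-2)}\Big)A+\Big(1-\tfrac{2(N+2)}{N(p-2)}\Big)B.\]
This is the second, decisive, place where $p>4+\tfrac4N$ enters: the coefficient of $A$ is positive since $N(p-2)>2(N+2)>4$, and the coefficient of $B$ is positive precisely because $N(p-2)>2(N+2)$, i.e. $p>4+\tfrac4N$ (this coefficient vanishes exactly at that threshold). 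Hence $I(u)\ge\big(1-\tfrac{2(N+2)}{N(p-2)}\big)B\ge\big(1-\tfrac{2(N+2)}{N(p-2)}\big)Ka^{-\beta}$, a bound independent of $u$. Taking the infimum over $u\in\mathcal{P}'(a)$ (the estimate is vacuous, and $m(a)=+\infty$, if $\mathcal{P}'(a)=\emptyset$) gives $m(a)\ge\big(1-\tfrac{2(N+2)}{N(p-2)}\big)Ka^{-\beta}\to+\infty$ as $a\to0^+$.

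The argument is a short chain of elementary estimates, so there is no genuine obstacle; the only point requiring care is the bookkeeping of exponents, in particular checking that both coefficients in the $(A,B)$--decomposition of $I|_{\mathcal{P}'(a)}$ are strictly positive, which is exactly equivalent to the standing hypothesis $p>4+\tfrac4N$ (the same condition that makes $\delta_p>1$, used for the bound on $B$). An alternative, essentially equivalent, route would use $m(a)=\inf_{u\in\mathcal{S}'(a)}\max_{s\in\mathbb{R}}I(s*u)$ (cf. Remark \ref{rem02.5}(2)) together with the scaling identity for $I(s*u)$ and \eqref{GNI} applied to $s*u$, but the direct computation on $\mathcal{P}'(a)$ above is the most economical.
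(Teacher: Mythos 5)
Your proof is correct, and it takes a genuinely different route from the paper. The paper argues by contradiction-free rescaling: given $u_n\in\mathcal{P}'(a_n)$ with $a_n\to 0^+$, it chooses $s_n$ so that $v_n:=(-s_n)*u_n$ satisfies $\int_{\mathbb{R}^N}|\nabla v_n|^2+\int_{\mathbb{R}^N}|v_n|^2|\nabla v_n|^2=1$, deduces $\int_{\mathbb{R}^N}|v_n|^p\to 0$ from \eqref{GNI} (using $\|v_n\|_2\to 0$ and $p<2\cdot 2^*$), and then uses the fiber-map maximality $I(u_n)=I(s(v_n)*v_n)\ge I(s*v_n)\ge \tfrac12 e^{2s}+o_n(1)$ for every $s>0$. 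You instead work directly on $\mathcal{P}'(a)$: eliminating $D=\int_{\mathbb{R}^N}|u|^p$ via $P(u)=0$ gives $I(u)\ge\bigl(1-\tfrac{2(N+2)}{N(p-2)}\bigr)B$ with both coefficients positive exactly when $p>4+\tfrac4N$, and \eqref{GNI} combined with $\delta_p>1$ forces $B\ge Ka^{-\beta}$. Your exponent bookkeeping is right ($\delta_p>1\iff p>4+\tfrac4N$, $\alpha_p>0\iff p<2\cdot2^*$, and $B>0$ since $u\not\equiv 0$ in $L^2$), and the division by $B^{\delta_p}$ is legitimate. What your approach buys is an explicit quantitative rate $m(a)\ge c\,a^{-\beta}$ with $\beta=\tfrac{2\alpha_p}{\delta_p-1}$, which is strictly more information than the paper's qualitative divergence; what the paper's approach buys is uniformity of technique with the rest of Section 6 (the same normalization trick is used in Lemma \ref{lem2.5}(2) and Lemma \ref{lem6.3}) and no need to track constants. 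Your final caveat about $\mathcal{P}'(a)=\emptyset$ is harmless but unnecessary: Remark \ref{rem02.5}(2) guarantees the projection $s(u)*u\in\mathcal{P}'(a)$ exists for the unperturbed functional as well, so the manifold is nonempty for every $a>0$.
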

\begin{proof}
It is sufficient to show that for any sequence $\{u_n\} \subset H^1(\mathbb{R}^N)\setminus \{0\}$ such that $\lim_{n \rightarrow \infty}\|u_n\|_2=0$ and $P(u_n)=0$, one has $I(u_n) \rightarrow +\infty$ as $n \rightarrow \infty$.

Take $s_n$ such that
$$e^{-2s_n} \int_{\mathbb{R}^N} |\nabla u_n|^2 + e^{-(N+2)s_n}\int_{\mathbb{R}^N} |u_n|^2 |\nabla u_n|^2=1.$$
Set $v_n:=(-s_n)*u_n$, it is clear that
$$ \int_{\mathbb{R}^N} |\nabla v_n|^2+\int_{\mathbb{R}^N} |v_n|^2 |\nabla v_n|^2=1,\;  \int_{\mathbb{R}^N} |v_n|^2= \int_{\mathbb{R}^N} |u_n|^2 \rightarrow 0\;\; \mbox{and}\;\; s(v_n)=s_n.$$
So we obtain $\int_{\mathbb{R}^N} |v_n|^p \rightarrow 0$.
Since $P(s(v_n)*v_n)=P(u_n)=0$, we derive that for $s>0$,
\begin{align*}
    &\ I(s(v_n)*v_n) \geq I(s*v_n) \\
      =&\ \frac{1}{2}e^{2s} \int_{\mathbb{R}^N} |\nabla v_n|^2 + e^{(N+2)s}\int_{\mathbb{R}^N} |v_n|^2 |\nabla v_n|^2 -\frac{1}{p}e^{p\gamma_ps}\int_{\mathbb{R}^N}  |v_n|^p \\
      \geq&\ \frac{1}{2}e^{2s} \left(\int_{\mathbb{R}^N} |\nabla v_n|^2 +\int_{\mathbb{R}^N} |v_n|^2 |\nabla v_n|^2\right) +o_n(1) \\
      =&\ \frac{1}{2}e^{2s} +o_n(1).
\end{align*}
As $s>0$ is arbitrary, it is clear that $I(u_n)=I(s(v_n)*v_n) \rightarrow +\infty$ as $n \rightarrow \infty$.
\end{proof}

\begin{proof}[Proof of Theorem \ref{Thm1.5}]
Together with Remark \ref{rem3.5}, Lemma \ref{lem6.3}$-$Lemma \ref{lem6.2}, we can complete the proof of Theorem \ref{Thm1.5}.
\end{proof}

\section*{Acknowledgements}
This paper is partially supported by the National Natural Science Foundation of China (No. 11571200) and the Natural Science Foundation of Shandong Province (No. ZR2021MA062).

\section*{Conflict of Interest Statement}
The authors declare that they have no conflict of interest.

\section*{Data Availability Statement}
The manuscript has no associated data.

\end{document}